 \newtheorem{theorem}{Theorem}[section]
 \newtheorem{lm}[theorem]{Lemma}
 \newtheorem{prop}[theorem]{Proposition}
 \newtheorem{corollary}[theorem]{Corollary}
 \theoremstyle{remark}
 \newtheorem{rema}[theorem]{\bf Remark}
 \newtheorem{exam}[theorem]{\textbf{Example}}
 \newtheorem{exams}[theorem]{\textbf{Example}}
 \def\NN{\mathds{N}}
 \def\QQ{\mathbb{Q}}
 \def\ZZ{\mathbb{Z}}
\begin{document}
  	
  \selectlanguage{english}
  \title[On the rank  of the $2$-class group...]{On the rank  of the $2$-class group of some imaginary triquadratic number  fields}
  %premier auteur
  \author[A. Azizi]{Abdelmalek Azizi}
  \address{Abdelmalek Azizi: Mohammed First University, Mathematics Department, Sciences Faculty, Oujda, Morocco }
  \email{abdelmalekazizi@yahoo.fr}
  % deuxième auteur
  \author[M. M. Chems-Eddin]{Mohamed Mahmoud CHEMS-EDDIN}
  \address{Mohamed Mahmoud CHEMS-EDDIN: Mohammed First University, Mathematics Department, Sciences Faculty, Oujda, Morocco }
  \email{2m.chemseddin@gmail.com}
  
  % troisième auteur
  \author[A. Zekhnini]{Abdelkader Zekhnini}
  \address{Abdelkader Zekhnini: Mohammed First University, Mathematics Department, Pluridisciplinary faculty, Nador, Morocco}
  \email{zekha1@yahoo.fr}
  
  \subjclass[2010]{11R11; 11R16; 11R18; 11R27; 11R29.}
  \keywords{$2$-group rank; $2$-class group; imaginary triquadratic number fields;  real quadratic field.}

  \begin{abstract}
  	Let $d$ be  an odd square-free integer and $\zeta_8$  a primitive $8$-th root of unity. The purpose of  this paper is to investigate  the rank of the $2$-class group of the fields  $L_d=\mathbb{Q}(\zeta_8,\sqrt{d})$.
  \end{abstract}
  
  \selectlanguage{english}
  
  \maketitle
  
  \section{Introduction}
  
  Let $k$ be a number field and $p$ a prime integer. Let $\mathrm{Cl}_p(k)$ denote the  $p$-class group of $k$, that is the $p$-Sylow subgroup  of its ideal class group $\mathrm{Cl}(k)$ in the wide sense. Class groups of  number fields have been studied for a long time, and
  there are many very interesting (and very difficult) problems concerning their behavior.
  An interesting invariant is the $p$-rank of $\mathrm{Cl}_p(k)$, i.e. the dimension   of  $\mathrm{Cl}(k)/\mathrm{Cl}(k)^p$ as a vector space over the field with $p$ elements $\mathbb{F}_p$.
   % A particular quantity of interest is  the rank of $\mathrm{Cl}_p(k)$ %which can be defined as follows.
 % It is known, by the Kronecker decomposition theorem, that any  finite %abelian  group   is   the direct product of cyclic groups. Then the rank %of $\mathrm{Cl}_p(k)$  is the number of cyclic $p$-groups appearing in %the decomposition of $\mathrm{Cl}(k)$, i.e,  the dimension of the %elementary abelian $p$-group  $\mathrm{Cl}(k)/\mathrm{Cl}(k)^p$ viewed %as a vector space over the prime field with p elements $\mathbb{F}_p$.
    In this work, we  investigate the $2$-rank  of the class groups of some imaginary triquadratic number  fields. To the best of our knowledge, there is no study in this setting and it
  would be an interesting task to develop such investigations. Note that the odd part of the class group of a number field is much better understood, since it is known to be isomorphic to the direct product of the odd parts of the class groups of its quadratic subfields (see \cite{lemmermeyer1994kuroda}).
  
  In the literature, there are many studies  that investigated  the $2$-rank of the class group of a number field $k$,  let us quote some.  For a quadratic field $k$, using   Gauss's genus  theory, one can easily deduce the rank of $\mathrm{Cl}_2(k)$.  For biquadratic fields,  the authors of \cite{azizi99, Be05} determined all positive  integers $d$  such that  $\mathrm{Cl}_2(k)$ is  isomorphic to  $\ZZ/2\ZZ \times \ZZ/2\ZZ$, where $k=\QQ(\sqrt d, \sqrt{-\ell})$ and   $\ell=1, 2$.  The  papers \cite{azizi20042, AM}  investigated the rank of $\mathrm{Cl}_2(k)$ for the fields
  $k=\QQ(\sqrt d, \sqrt{m})$, where $m$ is a prime and  $d$ a positive   square-free integer.  In the same direction,
  \cite{azizitaous(24)(222)}  classified  all fields $k=\QQ(\sqrt d, i)$ such that $\mathrm{Cl}_2(k)$ is isomorphic to  $\ZZ/2\ZZ \times \ZZ/4\ZZ$ or  $(\ZZ/2\ZZ)^3 $.
  In \cite{Parry77, brown19782}, E. Brown and C. Parry determined some imaginary quartic cyclic number  fields $k$ such that
  the rank of $\mathrm{Cl}_2(k)$ is at most $3$.  Finally,  \cite{mccall1995imaginary}  determined all  imaginary biquadratic fields whose $2$-class group  is cyclic.
  
  Let $d$ be  an odd  square-free integer and $\zeta_8$  a primitive $8$-th root of unity. In the present work, we are interested in  the rank of the $2$-class group of the  imaginary triquadratic number   fields  $L_{d}:=\mathbb{Q}(i,\sqrt{2},\sqrt{d})= \mathbb{Q}(\zeta_{8},\sqrt{d})$.
  Methods and techniques  used   are based on genus theory,  ambiguous class number formula,  the properties of the norm residue  symbol, units of some number fields  and  some other results of algebraic number theory.
  
  The structure of this paper is the following. In \S\ \ref{1}, we recall the ambiguous class number formula and the relations   allowing to calculate the rank of the $2$-class group of $L_d$. Next, we compute the number of prime ideals  of $K=\mathbb{Q}(\zeta_{8}) $ ramified in  $L_{d}=K(\sqrt{d})$. Thereafter, in \S\ \ref{6}, we recall the definition and some properties of the quadratic  norm residue symbol. In \S\  \ref{secion sur le rang de Ld}, we investigate the rank of $\mathrm{Cl}_2(L_d)$ according to  classes $\pmod 8$ in which  the prime divisors of $d$ lie. As  applications, in \S\ \ref{section sur les applications} we  determine the integers $d$ such that  $\mathrm{Cl}_2(L_d)$ is trivial, cyclic or of rank $2$, and we thus deduce the integers $d$ satisfying $\mathrm{Cl}_2(L_d)\simeq (\ZZ/2\ZZ)^2$, we shall say  $\mathrm{Cl}_2(L_d)$ is of type $(2, 2)$. We end this section by giving  the $2$-part of the class number of $L_d$ in terms of   those of its  biquadratic subfields.% Finally, and as an appendix, in \S\ \ref{3}, we  present the polynomial generating the field $L_d$.
  \section*{Notations}
  Let $k$ be a number field. Throughout this paper, we use  the following notations.
  \begin{enumerate}[\rm$\bullet$]
  	\item $d$:  An odd square-free integer,
  	\item $\zeta_{n}$:    An $n$-th primitive root of unity,
  	\item $K$   $:=\mathbb{Q}(\zeta_{8})$,
  	\item$L_{d}$  $:=K(\sqrt{d})$,
  	\item $\delta_{k}$:   The  discriminant of $k$,
  	\item $\mathcal O_{k}$:   The ring of integers of  $k$,
  	 	\item $\mathrm{Cl}(k)$:   The class group of  $k$,
  	\item $\mathrm{Cl}_2(k)$:   The $2$-class group of  $k$,
  	\item $h(k)$:   The class number of $k$,
  	\item $h_2(k)$:   The $2$-class number of $k$,
  	\item $ h_2(m)$:   The $2$-class number of a quadratic  field $\mathbb{Q}(\sqrt m)$,
  	\item $N$:   The  norm map for the extension $L_{d}/K$,
  	\item $E_{k}$:  The unit group of $\mathcal O_{k}$, %The group of units in the ring of integers of $k$,
  	\item $e_{d}$:   The integer defined by $(E_{K}:E_{K}\cap N(L_{d}))=2^{ e_{d}}$,
  	\item $\left( \frac{\alpha,\, d}{\mathfrak p}\right)$:   The quadratic norm residue symbol 	over $K$,
  	\item $\left(\dfrac{\cdot}{\cdot}\right)_4$: The biquadratic residue symbol,
  	\item $\overline{\alpha}$:   The coset of $\alpha$ in $E_{K}/(E_{K}\cap N(L_{d}))$,
  	\item $r_2(d)$:   The rank of the $2$-class group of $L_d$,
  	\item $\varepsilon_m$:   The fundamental unit of $\mathbb{Q}(\sqrt{m})$, where $m$ is a positive square-free integer,
  	\item $W_{k}$:   The set of roots of unity contained in $k$,
  	\item $\omega_{k}$:   The cardinality of $W_{k}$,
  	\item $k^{+}$:   The maximal real subfield of  $k$,
  	\item $Q_{k}$:   The  Hasse's index, that is  $(E_{k}:W_{k}E_{k^{+}})$, if $k/k^+$ is CM,
  	\item $Q(k'/k)$:   The unit index of a  biquadratic extension  $k'/k$,
  	\item $q(k):=(E_{k}: \prod_{i}E_{k_i})$, where $k_i$ are  the  quadratic subfields	of a multiquadratic field $k$.	
  \end{enumerate}
  \section{Preliminaries}\label{1}
  Let $K:=\mathbb{Q}(\zeta_{8}) $ and $L_{d}:=K(\sqrt{d})$, where $d$ is an odd square-free  integer. Note that
  $\mathcal{O}_K$ is a principal ideal domain. Denote by $Am(L_{d}/K)$ the group of ambiguous ideal  classes
  of $L_{d}/K$, that are classes of $\mathrm{Cl}(L_{d})$ fixed under any element
  of $\mathrm{Gal}(L_{d}/K)$, and by $Am_2(L_{d}/K)$ its 2-Sylow subgroup. In fact, $Am_2(L_{d}/K)=\{\mathfrak c\in \mathrm{Cl}({L_{d}}): \mathfrak{c}^2=1\}$ is an elementary  $2$-group of order
  $2^{r_2(d)}.$
  
  It is  well known, according to the ambiguous class number formula for a cyclic extension of prime degree (cf. \cite{gras1973classes}), that
  \begin{eqnarray*}
  	|Am(L_{d}/K)|=h(K)\frac{2^{t_{d}-1}}{( E_{K}:E_{K}\cap N(L_{d}))}.
  \end{eqnarray*}
  It follows that
  \begin{eqnarray}
  |Am_2(L_{d}/K)|=\frac{2^{t_{d}-1}}{( E_{K}:E_{K}\cap N(L_{d}))}=2^{t_{d}-1-{e_{d}}},
  \end{eqnarray}
  where $t_{d}$   is the number of finite  and infinite primes  of ${K}$ which ramify in $ L_{d}/K$ and ${e_{d}}$ is defined by $(E_{K}:E_{K}\cap N(L_{d}))=2^{ e_{d}}$. So the $2$-rank of $\mathrm{Cl}(L_{d})$ verifies the relation:
  \begin{eqnarray}
  r_2(d)=t_{d}-1-{e_{d}}. \label{egalite du 2-rang   =t-1-e}
  \end{eqnarray}

  Let us now  determine the ideals of $K$ that ramify in $L_d$. 	
    Let $\delta_{L_{d}/K}$ denote the generator of the relative discriminant of the extension $L_{d}/K$. We have:

  \begin{prop}\label{ prop the dicriminant}
  	Let $d$ be an odd  square-free integer. Then the relative discriminant of  $L_{d}/K$ is generated by $\delta_{L_{d}/K}= d$.
  \end{prop}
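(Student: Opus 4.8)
The plan is to compute $\delta_{L_d/K}$ prime by prime, separating the odd primes of $K$ dividing $d$ from the unique dyadic prime. Since the conductor of $K=\mathbb{Q}(\zeta_8)$ is $8$, every odd rational prime $p$ is unramified in $K/\mathbb{Q}$, so $p\mathcal{O}_K$ is a product of distinct primes of $K$; as $d$ is odd and square-free and $\pm 1\in\mathcal{O}_K^\times$, the ideal $d\mathcal{O}_K=\prod_{\mathfrak{q}\mid d}\mathfrak{q}$ is a square-free product of primes, each occurring with $v_\mathfrak{q}(d)=1$. This observation is what makes the odd part of the discriminant square-free.

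First I would handle the odd ramified primes. For each prime $\mathfrak{q}$ of $K$ dividing $d$, the element $d$ has odd valuation $1$ at $\mathfrak{q}$, so $L_d/K$ is ramified at $\mathfrak{q}$; since the residue characteristic is odd this ramification is tame, and $\mathfrak{q}$ therefore divides $\delta_{L_d/K}$ exactly to the first power. Hence the odd part of $\delta_{L_d/K}$ is $\prod_{\mathfrak{q}\mid d}\mathfrak{q}=d\mathcal{O}_K$.

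The main obstacle is to show that the unique dyadic prime $\mathfrak{p}$ of $K$ (so that $2\mathcal{O}_K=\mathfrak{p}^4$) is \emph{unramified} in $L_d/K$, so that it does not enter the discriminant. Here I would pass to the completion $K_\mathfrak{p}=\mathbb{Q}_2(\zeta_8)$, which is totally ramified over $\mathbb{Q}_2$, and use that $i=\sqrt{-1}$ and $\sqrt 2$ lie in $K$; thus $-1$ and $2$ are squares in $K_\mathfrak{p}^\times$, and the class of the unit $d$ in $K_\mathfrak{p}^\times/(K_\mathfrak{p}^\times)^2$ is unchanged after multiplying by $-1$. A short case analysis modulo $8$ then finishes it: if $d\equiv 1\pmod 8$ then $d$ is already a square in $\mathbb{Q}_2\subset K_\mathfrak{p}$; if $d\equiv 7\pmod 8$ then $-d\equiv 1\pmod 8$ and $d=(-1)(-d)$ is a square in $K_\mathfrak{p}$; and if $d\equiv 3,5\pmod 8$ then (after multiplying by the square $-1$ when $d\equiv 3$) $\sqrt d$ generates the unramified quadratic extension $\mathbb{Q}_2(\sqrt 5)\,K_\mathfrak{p}$, since $K_\mathfrak{p}/\mathbb{Q}_2$ is totally ramified while $\mathbb{Q}_2(\sqrt 5)/\mathbb{Q}_2$ is unramified. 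In every case $K_\mathfrak{p}(\sqrt d)/K_\mathfrak{p}$ is unramified, so $\mathfrak{p}\nmid\delta_{L_d/K}$.

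Combining the two steps, $\delta_{L_d/K}=\prod_{\mathfrak{q}\mid d}\mathfrak{q}=d\mathcal{O}_K$, and since $\mathcal{O}_K$ is a principal ideal domain this ideal is generated by $d$, giving $\delta_{L_d/K}=d$ as claimed. The only delicate point is the dyadic computation at $\mathfrak{p}$; everything else reduces to the standard tame behaviour of a quadratic extension together with the square-freeness of $d$ in $\mathcal{O}_K$.
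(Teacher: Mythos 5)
Your argument is correct, but it proceeds quite differently from the paper's. You compute the relative discriminant locally: at each odd prime $\mathfrak{q}\mid d$ of $K$ the valuation $v_{\mathfrak{q}}(d)=1$ forces tame ramification and hence exponent exactly $1$ in $\delta_{L_d/K}$, while at the dyadic prime $\mathfrak{p}$ you check case by case modulo $8$ that $K_{\mathfrak{p}}(\sqrt{d})/K_{\mathfrak{p}}$ is unramified, using that $-1$ is a square in $K_{\mathfrak{p}}=\mathbb{Q}_2(\zeta_8)$ and that $K_{\mathfrak{p}}/\mathbb{Q}_2$ is totally ramified so that adjoining $\sqrt{5}$ stays unramified (the fact that $2$ is also a square in $K_{\mathfrak{p}}$, which you mention, is never actually needed since $d$ is a $\mathfrak{p}$-adic unit). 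The paper instead argues globally: it reduces to $d\equiv 1\pmod 4$ via $L_d=L_{-d}$ (legitimate because $i\in K$), observes that $\delta_{\mathbb{Q}(\sqrt d)}=d$ and $\delta_K=2^8$ are coprime so that $\mathcal{O}_{L_d}=\mathcal{O}_K\bigl[\frac{1+\sqrt d}{2}\bigr]$, and reads off $\delta_{L_d/K}=\mathrm{disc}\bigl(1,\frac{1+\sqrt d}{2}\bigr)=d$. The paper's route is shorter and yields the relative integral basis (hence the ring of integers of $L_d$, which it reuses in the subsequent corollary); your route is more robust, as it isolates exactly why the dyadic prime never enters (every odd square class becomes trivial or unramified over $\mathbb{Q}_2(\zeta_8)$) and would transfer without change to $\mathbb{Q}(\zeta_{2^m},\sqrt d)$ for larger $m$, a generalization the authors mention in their closing remark. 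The only point you leave implicit is the standard localization of the different/discriminant that justifies assembling the answer prime by prime; that is routine and not a gap.
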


  \begin{proof}
  	Assume that $d\equiv 1\pmod 4$. Thus $\delta_{\mathbb{Q}(\sqrt{d})}=d$,  $\delta_{K}=2^{8}$ and $\mathcal{O}_{\mathbb{Q}(\sqrt{d})}=\mathbb{Z}[\frac{1+\sqrt{d}}{2}]$.
  	Since $\delta_{\mathbb{Q}(\sqrt{d})}$ and $ \delta_{K}$ are coprime, then $\mathcal{O}_{L_{d}}=\mathcal{O}_{K}[\frac{1+\sqrt{d}}{2}]$. Hence    $\delta_{L_{d}/K}= \mathrm{disc}_{L_{d}/K}\left(1,\frac{1+\sqrt{d}}{2}\right)=d$.
  		If  $d\equiv 3\pmod 4$, then $-d\equiv 1\pmod 4$. As  $L_{d}=L_{-d}$, the previous case completes the proof.
  \end{proof}
  \noindent The following result is easily deduced.
  \begin{corollary}
  	Let $d$ be an odd square-free integer.
  	\begin{enumerate}[\rm1.]
  		\item The discriminant of $L_d$ is   $\delta_{L_{d}}=2^{16}\cdot d^{4}$, and thus the primes that ramify in $L_{d}$ are exactly $2$ and the prime divisors of $d$.
  		\item The ring of integers of $L_{d}$ is given by	
  		$$	\mathcal{O}_{L_{d}}=\left\{  \begin{array}{ccc}
  		\mathbb{Z}[\zeta_{8},\frac{1+\sqrt{d}}{2}]& \text{ if}& d\equiv 1\pmod 4,\\
  		\mathbb{Z}[\zeta_{8},\frac{1+\sqrt{-d}}{2}]& \text{ if}& d\equiv 3\pmod 4.
  		\end{array}
  		\right.$$	
  	\end{enumerate}
  	%(See $7Q$ of \cite{ribenboim1972algebraic}).
  \end{corollary}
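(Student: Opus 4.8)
The plan is to read off both statements directly from Proposition~\ref{ prop the dicriminant} by applying the multiplicativity of the discriminant in the tower $\mathbb{Q}\subset K\subset L_d$. For the first assertion I would use the tower formula, which as ideals of $\mathbb{Z}$ states that
\begin{eqnarray*}
\delta_{L_d}=N_{K/\mathbb{Q}}\big(\delta_{L_d/K}\big)\cdot \delta_{K}^{[L_d:K]}.
\end{eqnarray*}
By Proposition~\ref{ prop the dicriminant} the relative discriminant $\delta_{L_d/K}$ is generated by the rational integer $d$, so its norm is $N_{K/\mathbb{Q}}(d)=d^{[K:\mathbb{Q}]}=d^{4}$. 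Since $K=\mathbb{Q}(\zeta_8)$ has discriminant $\delta_K=2^{8}$ and $[L_d:K]=2$, the second factor equals $(2^{8})^{2}=2^{16}$. Multiplying gives $\delta_{L_d}=2^{16}\,d^{4}$; the sign is positive because $L_d$ is totally complex of degree $8$.

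From this the claim about ramification follows at once, since a rational prime ramifies in $L_d$ exactly when it divides $\delta_{L_d}$: as $d$ is odd and square-free, the prime divisors of $2^{16}d^{4}$ are precisely $2$ and the prime divisors of $d$. For the ring of integers, the case $d\equiv 1\pmod 4$ is already contained in the proof of Proposition~\ref{ prop the dicriminant}, which shows $\mathcal{O}_{L_d}=\mathcal{O}_K\big[\tfrac{1+\sqrt d}{2}\big]=\mathbb{Z}\big[\zeta_8,\tfrac{1+\sqrt d}{2}\big]$, using $\mathcal{O}_K=\mathbb{Z}[\zeta_8]$. In the case $d\equiv 3\pmod 4$ one has $-d\equiv 1\pmod 4$, and since $i\in K$ gives $\sqrt d=i\sqrt{-d}$ and hence $L_d=L_{-d}$, applying the first case to $-d$ yields $\mathcal{O}_{L_d}=\mathbb{Z}\big[\zeta_8,\tfrac{1+\sqrt{-d}}{2}\big]$.

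The argument is essentially bookkeeping, and I expect no serious obstacle; the only place demanding a little care is the correct use of the tower formula, namely tracking that the relative discriminant ideal contributes $d^{4}$ through the norm and that $\delta_K$ enters with exponent $[L_d:K]=2$. Verifying the value $\delta_K=2^{8}$ and the positivity of the sign are the remaining details, both standard for the cyclotomic field $\mathbb{Q}(\zeta_8)$.
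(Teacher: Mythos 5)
Your proof is correct and follows exactly the route the paper intends: the authors state the corollary is ``easily deduced'' from Proposition~2.2, and your deduction via the tower formula $\delta_{L_d}=N_{K/\mathbb{Q}}(\delta_{L_d/K})\cdot\delta_K^{[L_d:K]}=d^4\cdot 2^{16}$, together with the identity $\mathcal{O}_{L_d}=\mathcal{O}_K[\frac{1+\sqrt{d}}{2}]$ already established in that proposition's proof and the reduction $L_d=L_{-d}$ for $d\equiv 3\pmod 4$, is precisely that deduction. All the numerical details ($\delta_K=2^8$, the positive sign, and the ramification criterion via the discriminant) check out.
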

  \noindent Next, we compute the number of prime ideals of $K$ that ramify in $L_d$.
  \begin{theorem}\label{prop number of ramified primes}
  	Let 	$d$ be an odd  square-free integer. Then  the number of prime ideals of  $K$ that ramify in $L_{d}$ is $2(q+r)$, where $r$ is the number of  prime integers  dividing  $d$ and $q$ is the number of those which are congruent to $ 1\pmod 8$.
  \end{theorem}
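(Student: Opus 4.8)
The plan is to combine the relative-discriminant computation of the preceding Proposition with the classical decomposition law for odd primes in the cyclotomic field $K=\mathbb{Q}(\zeta_8)$. First I would record that $L_d/K$ is a quadratic extension whose relative discriminant is generated by $d$, so a prime ideal $\mathfrak p$ of $K$ ramifies in $L_d$ if and only if $\mathfrak p$ divides $(d)$. Since $d$ is odd, the unique prime of $K$ lying above $2$ does not divide $(d)$ and is therefore unramified; consequently the ramified primes are exactly those lying above some rational prime divisor $p$ of $d$. I would also observe that $K$ is totally imaginary, so all its infinite places are complex and none can ramify in $L_d$; hence the count of ramified prime ideals coincides with the quantity $t_d$ entering the ambiguous class number formula.

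Next I would reduce the problem to counting, for each rational prime $p\mid d$, the number of prime ideals of $K$ above $p$. As $p$ is odd it is unramified in $K/\mathbb{Q}$ (the only ramified prime there is $2$, since $\delta_K=2^8$), and in $K=\mathbb{Q}(\zeta_8)$ the residue degree of $p$ equals the order of the class of $p$ in $(\mathbb{Z}/8\mathbb{Z})^\times\cong\mathbb{Z}/2\mathbb{Z}\times\mathbb{Z}/2\mathbb{Z}$. Because every nontrivial element of this group has order $2$, there are only two possibilities: if $p\equiv 1\pmod 8$ the prime splits completely and there are $4$ prime ideals above $p$, while if $p\equiv 3,5,7\pmod 8$ the residue degree is $2$ and there are exactly $2$ prime ideals above $p$.

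Finally I would assemble the total. Writing $q$ for the number of prime divisors of $d$ that are congruent to $1\pmod 8$ and $r$ for the total number of prime divisors of $d$, the contribution is $4$ for each of the $q$ primes in the first class and $2$ for each of the remaining $r-q$ primes, which gives $4q+2(r-q)=2(q+r)$ ramified prime ideals, as claimed.

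The only genuinely nonroutine ingredient here is the splitting law for $p$ in $\mathbb{Q}(\zeta_8)$; once that standard fact is available the rest is bookkeeping, so I do not anticipate a serious obstacle. The one point deserving a word of care is verifying that \emph{every} prime above a given $p\mid d$ (not merely one of them) ramifies in $L_d/K$; this follows immediately because $(p)\mathcal{O}_K$ is a product of distinct prime ideals, each of which divides $(d)=\delta_{L_d/K}$ to the first power and hence ramifies.
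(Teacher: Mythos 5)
Your proposal is correct and follows essentially the same route as the paper: identify the ramified primes of $K$ in $L_d$ as exactly the divisors of $(d)=\delta_{L_d/K}$, apply the cyclotomic splitting law in $\mathbb{Q}(\zeta_8)$ to get $4$ primes above $p\equiv 1\pmod 8$ and $2$ otherwise, and sum to $4q+2(r-q)=2(q+r)$. Your added remarks on the infinite places and on every prime above $p$ ramifying are welcome but not a departure from the paper's argument.
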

  \begin{proof}
  	Using 	the theorem of the cyclotomic reciprocity law (\cite[ Theorem 2.13]{washington1997introduction}), one can  verify what follows:
  	
  	\hspace{0.5cm}\begin{minipage}{13cm}
  		\begin{enumerate}[\rm $\bullet$]
  			\item If $p$ is a prime integer  such that $p\equiv 1\pmod 8$, then there are exactly $4$ prime ideals of $K$ lying over $p$.
  			\item If  $p\not\equiv 1\pmod 8$, then there are exactly $2$ primes of $K$ above $p$.
  		\end{enumerate}
  	\end{minipage}\\
  	By Proposition \ref{ prop the dicriminant}, the prime ideals of $K$ that ramify in $L_{d}$ are exactly the divisors of $d$ in $\mathcal{O}_K$. So the number of those primes is   $4q+2(r-q)=2(q+r)$ as desired.
  \end{proof}
  
  Since $K$ is a biquadratic field,   $e_d\in \{0,1,2\}$. The  next corollary follows directly from the previous theorem and   the formula \eqref{egalite du 2-rang   =t-1-e}.
  \begin{corollary}
  	Let $d$ be an odd  square-free integer. Then  $h(L_{d})$ is even in the following two cases:
  	\begin{enumerate}[\rm1.]
  		\item  $|d|$ is composite.
  		\item $|d|$ is a prime  congruent to $1\pmod 8$.
  		
  	\end{enumerate}
  \end{corollary}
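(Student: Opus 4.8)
The plan is to reduce the evenness of $h(L_d)$ to the inequality $r_2(d)\geq 1$ and then to read this off directly from the formula \eqref{egalite du 2-rang   =t-1-e}. Indeed, $h(L_d)$ is even if and only if $\mathrm{Cl}_2(L_d)$ is nontrivial, which happens exactly when $r_2(d)\geq 1$. So it suffices to produce a lower bound for $r_2(d)$ in each of the two listed cases.

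First I would pin down $t_d$. Since $K=\mathbb{Q}(\zeta_8)$ is totally imaginary it has no real infinite place, so no infinite prime of $K$ ramifies in $L_d/K$; hence $t_d$ equals the number of \emph{finite} primes of $K$ that ramify in $L_d$, which by Theorem \ref{prop number of ramified primes} is $2(q+r)$. Substituting into \eqref{egalite du 2-rang   =t-1-e} gives $r_2(d)=2(q+r)-1-e_d$, and since $K$ is biquadratic we already know $e_d\in\{0,1,2\}$, whence
\begin{equation*}
r_2(d)\;\geq\; 2(q+r)-3.
\end{equation*}

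It then remains to check that $q+r\geq 2$ in both situations. If $|d|$ is composite, then, $d$ being square-free, it has at least two distinct prime divisors, so $r\geq 2$ and a fortiori $q+r\geq 2$. If $|d|$ is a prime congruent to $1\pmod 8$, then $r=1$ and that single prime is itself counted by $q$, so $q=1$ and again $q+r=2$. In either case the displayed bound yields $r_2(d)\geq 2\cdot 2-3=1$, so $h(L_d)$ is even. There is no genuine difficulty here: the assertion is a direct arithmetic consequence of the two preceding results, the only points deserving care being the vanishing of the infinite contribution to $t_d$ (guaranteed by $K$ being totally imaginary) and the uniform bound $e_d\leq 2$.
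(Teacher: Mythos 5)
Your proof is correct and follows exactly the paper's intended route: the corollary is stated there as a direct consequence of Theorem \ref{prop number of ramified primes} (giving $t_d=2(q+r)$) and formula \eqref{egalite du 2-rang   =t-1-e} together with the bound $e_d\le 2$ from $K$ being biquadratic. Your explicit check that $q+r\ge 2$ in both cases, and your remark that no infinite place of the totally imaginary field $K$ contributes to $t_d$, are precisely the details the paper leaves implicit.
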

  \noindent The next lemma provides the unit group of $K$.
  \begin{lm}\label{lm unite zeta8}
  	The unit group  of $K$ is given by
  	$E_{K}=\langle\zeta_8,\varepsilon_2\rangle.$
  \end{lm}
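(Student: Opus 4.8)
The plan is to exploit the CM structure of $K=\mathbb{Q}(\zeta_8)$. Since $\zeta_8+\zeta_8^{-1}=\sqrt2$, the maximal real subfield is $K^{+}=\mathbb{Q}(\sqrt2)$, whose unit group is $E_{K^{+}}=\langle -1,\varepsilon_2\rangle$. First I would pin down the torsion: because $[K:\mathbb{Q}]=4$ and the only quadratic subfields of $K$ are $\mathbb{Q}(i)$, $\mathbb{Q}(\sqrt2)$ and $\mathbb{Q}(\sqrt{-2})$, the field $\mathbb{Q}(\sqrt{-3})$ is not a subfield, so $\zeta_3\notin K$, while a primitive $16$-th root of unity would force degree $8$. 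Hence $W_K=\langle\zeta_8\rangle$ and $\omega_K=8$.

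Next, as $K$ is totally imaginary of degree $4$, Dirichlet's unit theorem gives unit rank $\tfrac{[K:\mathbb{Q}]}{2}-1=1$, so $E_K/W_K$ is infinite cyclic. The comparison with $E_{K^{+}}$ is governed by Hasse's index $Q_K=(E_K:W_KE_{K^{+}})\in\{1,2\}$; once I know $Q_K=1$ I obtain $E_K=W_KE_{K^{+}}=\langle\zeta_8\rangle\langle-1,\varepsilon_2\rangle=\langle\zeta_8,\varepsilon_2\rangle$, using $-1=\zeta_8^{4}$. Thus everything reduces to computing $Q_K$.

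To compute $Q_K$, I would use the homomorphism $\psi\colon E_K\to W_K$, $\psi(\varepsilon)=\varepsilon/\bar\varepsilon$, where complex conjugation sends $\zeta_8\mapsto\zeta_8^{-1}$; this lands in $W_K$ because $\varepsilon/\bar\varepsilon$ has absolute value $1$ under every embedding, hence is a root of unity by Kronecker's theorem. Its kernel is exactly $E_{K^{+}}$, and $\psi(\zeta_8)=\zeta_8^{2}=i$ generates the index-$2$ subgroup $\langle i\rangle$ of $W_K$. Since $\ker\psi\subseteq W_KE_{K^{+}}$, one gets $Q_K=(\psi(E_K):\langle i\rangle)$, which equals $1$ unless some unit $\varepsilon$ satisfies $\varepsilon/\bar\varepsilon=\zeta_8$ with $\zeta_8$ a generator of $W_K$. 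The main obstacle is precisely to rule this out, i.e. to establish $Q_K=1$. The cleanest route is to invoke the classical fact that the Hasse index is trivial for every prime-power cyclotomic field, which applies here since $8=2^{3}$ (see \cite{washington1997introduction}). Granting this, $\psi(E_K)=\langle i\rangle$, whence $E_K=W_KE_{K^{+}}=\langle\zeta_8,\varepsilon_2\rangle$, as claimed.
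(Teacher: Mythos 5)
Your proposal is correct and follows essentially the same route as the paper: both reduce the statement to the triviality of the Hasse unit index $Q_K=(E_K:W_KE_{K^+})$ and settle that by citing a classical result (the paper invokes \cite[Theorem 42]{frohlich1993} after noting that $\varepsilon_2$ stays fundamental in $K$, you invoke the fact that $Q=1$ for prime-power cyclotomic fields). The only cosmetic remark is that in pinning down $W_K$ you should also exclude $\zeta_5$ (e.g.\ because $\mathbb{Q}(\zeta_5)/\mathbb{Q}$ is cyclic of degree $4$ while $\mathrm{Gal}(K/\mathbb{Q})\simeq(\ZZ/2\ZZ)^2$), but this does not affect the argument.
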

  \begin{proof}
  	The field $K=\mathbb{Q}(\zeta_8)=\mathbb{Q}(i,\sqrt{2})$ is a Galois extension of $\mathbb{Q}$ whose Galois group is an elementary $2$-group of order $4$.
  	We have, $\mathbb{Q}(\sqrt{2})$ is the  real quadratic subfield of $K$ with fundamental unit $\varepsilon_2=1+\sqrt{2}$, which is also a fundamental unit of $K$. Thus by   \cite[Theorem 42]{frohlich1993}, the Hasse's unit index is equal to $1$, i.e.,
  	$ (E_{K}:E_{\mathbb{Q}(\sqrt{2})}W_{K}) =1$, where $W_{K}$ is the set of roots of unity contained in $K$.
  \end{proof}
  \section{Quadratic norm residue symbol}\label{6}
  To compute the index $e_d$ appearing in the formula (\ref{egalite du 2-rang   =t-1-e}), we will use  the quadratic norm residue symbol, so we have to recall its  definition  and some of its  properties (cf.  \cite[Chapter II, Theorem 3.1.3]{grasbook}).
  Let $k$ be a number field and $\beta\in k^*$  a square-free element in $k$. Let $f$ be the conductor of $k(\sqrt{\beta})/k$. For any prime
  $\mathfrak{p}$ of $k$ (finite or infinite), we denote by $f_\mathfrak{p}$  the largest power of  $\mathfrak{p}$ dividing  $f$. Let $\alpha\in k^*$, according to the approximation theorem there exists $\alpha_0\in k$ such that
  \begin{center}
  	\begin{tabular}{ccc}
  		$\alpha_0\equiv \alpha   \pmod{f_\mathfrak{p}}$& and & $ \alpha_0\equiv 1  \pmod{ \frac{f}{f_\mathfrak{p}}}$.
  	\end{tabular}
  \end{center}
  If $(\alpha_0)=\mathfrak{p}^n\mathfrak{P}$ with $n\in  \mathbb{Z}$ and $(\mathfrak{P},\mathfrak{p})=1$ ($n=0$ if $\mathfrak{p}$ is infinite), set
  
  $$\left(\frac{\alpha,k(\sqrt{\beta})}{\mathfrak{p}}\right)=\left(\frac{k(\sqrt{\beta})}{\mathfrak{P}}\right),$$
  where $\left(\frac{k(\sqrt{\beta})}{\mathfrak{P}}\right)$ is the Artin map applied to $\mathfrak{P}$. For $\alpha\in k^*$ and a prime (finite or infinite) $\mathfrak{p}$ of $k$, the quadratic norm residue symbol
  is defined by
  $$\left(\frac{\alpha,\beta}{\mathfrak{p}}\right)=\frac{\left(\frac{\alpha,k(\sqrt{\beta})}{\mathfrak{p}}\right)(\sqrt{\beta})}{\sqrt{\beta}}\in\{\pm 1\}.$$
  
  If the prime $\mathfrak{p}$ is unramified in $k(\sqrt{\beta})/k$, we set
  
  $$\left(\frac{\beta}{\mathfrak{p}}\right)=\frac{\left(\frac{k(\sqrt{\beta})}{\mathfrak{p}}\right)(\sqrt{\beta})}{\sqrt{\beta}}\in\{\pm 1\}.$$
  
  Note that the norm residue symbol may be defined more generally for an extension $k(\sqrt[m]{\beta})/k$, where $m\in\mathds{N}^*$,  $k$ is a number field containing the $m$-th root of unity and $\beta \in k^*$.
  The quadratic norm residue symbol  verifies the following properties that we shall use later.
  
  \begin{enumerate}[\rm 1.]
  	\item $\left(\frac{\alpha_1\alpha_2,\;\beta}{\mathfrak{p}}\right)= \left(\frac{\alpha_1,\;\beta}{\mathfrak{p}}\right)
  	\left(\frac{\alpha_2,\;\beta}{\mathfrak{p}}\right).$
  	\item $\left(\frac{\alpha,\;\beta}{\mathfrak{p}}\right)=\left(\frac{\beta,\; \alpha}{\mathfrak{p}}\right).$
  	\item If $\mathfrak{p}$ is unramified in $k(\sqrt{\beta})/k$ and appears with exponent $e$ in the decomposition of  $(\alpha)$, then
  	$\displaystyle\left(\frac{\alpha,\beta}{\mathfrak{p}}\right)=\left(\frac{\beta}{\mathfrak{p}}\right)^e.$
  	\item  If $\mathfrak{p}$ is unramified in $k(\sqrt{\beta})/k$ and does not appear in  the decomposition of  $(\alpha)$, then $\displaystyle\left(\frac{\alpha,\beta}{\mathfrak{p}}\right)=1. $
  	\item $\prod_{\mathfrak{p}\in Pl}\left(\frac{\alpha,\;\beta}{\mathfrak{p}}\right)=1$, where $Pl$ is the set of all finite and infinite primes of $k$.
  	\item Let $k_1$ be a finite extension of $k$, $\alpha \in k_1^*$ and $\beta\in k^*$. Denote by $\mathfrak{p}$ a prime ideal of $k$ and by $\mathfrak{B}$ a prime ideal of $k_1$ above $\mathfrak{p}$. Thus
  	$$\prod_{\mathfrak{B}|\mathfrak{p}}\left(\frac{\alpha,\beta}{\mathfrak{B}}\right)=\left(\frac{N_{k_1/k}(\alpha),\beta}{\mathfrak{p}}\right).$$
  \end{enumerate}
  
  \section{The rank of the $2$-class group of $L_d$}\label{secion sur le rang de Ld}
  In the present section we compute, $r_2(d)$, the rank of the $2$-class group of $L_d$.
  Since $L_{d}=L_{-d}=L_{2d}$, then without loss of generality, we  suppose that $d$ is an odd positive square-free integer. We shall compute $r_2(d)$ distinguishing two cases  according to the  classes $\pmod 8$ in which the prime  divisors of $d$  lie. Let us start with the following results.
  
  \begin{lm}[\cite{azizi20042}]
  	Let $k$ be a number field, $d$ a positive square-free integer and $\alpha \in k^*$ such that the ideal   $ \alpha\mathcal{O}_k$   is the norm of a fractional ideal of $k(\sqrt{d})$.
  	Then, $\alpha$ is norm in $k(\sqrt{d})/k$ if and only if $\displaystyle\left(\frac{\alpha,d}{\mathfrak{p}}\right)=1$ for  all primes $\mathfrak{p}$ of $k$   ramified in  $k(\sqrt{d})$.
  \end{lm}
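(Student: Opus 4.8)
The plan is to invoke the Hasse Norm Theorem for the cyclic (indeed quadratic) extension $k(\sqrt{d})/k$, which asserts that $\alpha\in k^*$ is a global norm from $k(\sqrt{d})$ if and only if it is a local norm at every place $\mathfrak{p}$ of $k$, and then to translate ``local norm at $\mathfrak{p}$'' into the vanishing of the norm residue symbol: $\alpha$ is a local norm at $\mathfrak{p}$ exactly when $\left(\frac{\alpha,d}{\mathfrak{p}}\right)=1$. With this dictionary, $\alpha$ is a norm precisely when $\left(\frac{\alpha,d}{\mathfrak{p}}\right)=1$ for every prime $\mathfrak{p}$, so the whole content of the lemma is the claim that, under the hypothesis $\alpha\mathcal{O}_k=N(\mathfrak{A})$ for some fractional ideal $\mathfrak{A}$ of $k(\sqrt{d})$, the symbol is automatically trivial at every prime unramified in $k(\sqrt{d})/k$. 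One implication is then immediate: if $\alpha$ is a norm it is a local norm everywhere, so all the symbols vanish, in particular those attached to the ramified primes.

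For the converse I would argue that the unramified symbols cost nothing. Fix an unramified finite prime $\mathfrak{p}$ and let $e$ be its exponent in $(\alpha)$. If $\mathfrak{p}$ splits in $k(\sqrt{d})$, then $d$ is a square modulo $\mathfrak{p}$, so $\left(\frac{d}{\mathfrak{p}}\right)=1$ and Property 3 gives $\left(\frac{\alpha,d}{\mathfrak{p}}\right)=1$; if $\mathfrak{p}$ does not divide $(\alpha)$, Property 4 gives the same conclusion directly. The only remaining case is that $\mathfrak{p}$ is inert and divides $(\alpha)$, and here the ideal hypothesis enters decisively: the unique prime $\mathfrak{P}$ of $k(\sqrt{d})$ above an inert $\mathfrak{p}$ has residue degree $2$, so $N(\mathfrak{P})=\mathfrak{p}^2$, whence the norm of any fractional ideal carries only even powers of $\mathfrak{p}$. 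Since $(\alpha)=N(\mathfrak{A})$, the exponent $e$ is even, and Property 3 yields $\left(\frac{\alpha,d}{\mathfrak{p}}\right)=\left(\frac{d}{\mathfrak{p}}\right)^{e}=1$.

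It then remains to dispose of the infinite primes, and this is where the positivity of $d$ is used: at a complex place the local extension is trivial, while at a real place $d>0$ forces $\sqrt{d}$ to remain real, so again the local extension is trivial and every element is a local norm; in both cases $\left(\frac{\alpha,d}{\mathfrak{p}}\right)=1$. Collecting these facts, the assumption that the symbols vanish at the ramified primes upgrades to vanishing at \emph{all} primes of $k$, so Hasse's theorem returns that $\alpha$ is a norm in $k(\sqrt{d})/k$, completing the equivalence.

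The step I expect to be the crux is the treatment of the inert unramified primes: this is the one place where the purely local symbol analysis of Properties 3 and 4 is insufficient on its own and must be supplemented by the global hypothesis $\alpha\mathcal{O}_k=N(\mathfrak{A})$, through the observation that inert primes contribute only even exponents to ideal norms. Everything else is either a direct appeal to the Hasse Norm Theorem or a routine reading of the listed properties of the norm residue symbol.
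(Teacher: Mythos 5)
Your proof is correct; the only caveat is that the paper itself gives no proof of this lemma, importing it verbatim from \cite{azizi20042}, so there is no internal argument to compare against. Your route --- Hasse's norm theorem for the cyclic extension $k(\sqrt{d})/k$, the dictionary between local norms and the symbol $\left(\frac{\alpha,d}{\mathfrak{p}}\right)$, the hypothesis $\alpha\mathcal{O}_k=N(\mathfrak{A})$ used precisely to force even exponents at inert unramified primes, and the positivity of $d$ to trivialize the infinite places --- is the standard proof of this statement and, correctly, identifies the inert case as the one step where the global ideal hypothesis is indispensable.
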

  %	After  the above  lemma, Proposition \ref{ prop the dicriminant} and Lemma \ref{lm unite zeta8} to compute $e_d$, it suffices to compute the quadratic norm residue symbols  $\left(\frac{\zeta_{8},d}{\mathfrak{p}}\right)$ and $\left(\frac{\varepsilon_2,d}{\mathfrak{p}}\right)$
  %	only  for the  prime ideals $\mathfrak{p}$ of $K$ dividing $d$. We have:

  Thus, to compute $e_d$, it suffices to compute the quadratic norm residue symbols  $\left(\frac{\zeta_{8},\,d}{\mathfrak{p}}\right)$ and $\left(\frac{\varepsilon_2,\,d}{\mathfrak{p}}\right)$
     for the  prime ideals $\mathfrak{p}$ of $K$ dividing $d$ (see   Proposition \ref{ prop the dicriminant} and Lemma \ref{lm unite zeta8} ).
 
The next lemma follows directly from properties 1, 4 and 5 of the
 norm residue symbol.
  \begin{lm}\label{lemma norm residue symbol pp} Let  $d$ be an odd  positive  square-free integer and  $\mathfrak p$ a prime of $K$ dividing $d$. Denote by $\alpha$ a unit of $K$. Then
  	\begin{enumerate}[\rm1.]	
  		\item 	$\displaystyle\left(\frac{\alpha,\,d}{\mathfrak p}\right)= \left( \frac{\alpha,\,p}{\mathfrak p}\right),$ where $p$ is the prime contained in $\mathfrak p$.
  		\item  $\displaystyle\prod_{\mathfrak p | d}	\left( \frac{\alpha,\, d}{\mathfrak p}\right)=1.$	
  	\end{enumerate}	
  \end{lm}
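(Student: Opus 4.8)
The plan is to prove the two assertions separately, using the bimultiplicativity of the symbol, the local triviality criterion (property~4), and the product formula (property~5).

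For the first assertion I would factor $d=p\,d'$, where $d'$ is the part of $d$ prime to $p$; since $d$ is odd and square-free, $d'$ is again odd and square-free and $\gcd(p,d')=1$. As the symbol is symmetric and multiplicative in each argument (properties~1 and~2), this gives
\[
\left(\frac{\alpha,\,d}{\mathfrak p}\right)=\left(\frac{\alpha,\,p}{\mathfrak p}\right)\left(\frac{\alpha,\,d'}{\mathfrak p}\right),
\]
so it suffices to show the last factor is trivial. Now $\mathfrak p$ lies over $p$, hence $\mathfrak p\nmid d'$ and $\mathfrak p\nmid 2$; since the relative discriminant of $K(\sqrt{d'})/K$ is $d'$ (Proposition~\ref{ prop the dicriminant} applied to $d'$), the prime $\mathfrak p$ is unramified in $K(\sqrt{d'})/K$. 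As $\alpha$ is a unit, $\mathfrak p$ does not occur in $(\alpha)=\mathcal O_K$, so property~4 yields $\left(\frac{\alpha,\,d'}{\mathfrak p}\right)=1$ and the claim follows.

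For the second assertion I would apply the product formula (property~5) over the set $Pl$ of all places of $K$ and check that every factor attached to a place not dividing $d$ is trivial. By Proposition~\ref{ prop the dicriminant} the relative discriminant of $L_d/K$ is the odd integer $d$, so the primes ramifying in $L_d/K$ are precisely the divisors of $d$; in particular $2$ is unramified. Thus for every finite prime $\mathfrak q\nmid d$ the extension is unramified at $\mathfrak q$ and, $\alpha$ being a unit, $\mathfrak q\nmid(\alpha)$, whence property~4 gives $\left(\frac{\alpha,\,d}{\mathfrak q}\right)=1$. For the archimedean places I would use that $K=\mathbb Q(\zeta_8)$ is totally imaginary, so each infinite place is complex, $K_v(\sqrt d)=K_v$, and the symbol is again $1$. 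Substituting into property~5 leaves only the factors indexed by $\mathfrak p\mid d$, giving $\prod_{\mathfrak p\mid d}\left(\frac{\alpha,\,d}{\mathfrak p}\right)=1$.

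The computations are entirely routine; the only point that needs care is the bookkeeping of places in the product formula, namely verifying that the dyadic prime and the archimedean primes contribute trivially. The former is guaranteed by $d$ being odd (so $2\nmid\delta_{L_d/K}$ by Proposition~\ref{ prop the dicriminant}), and the latter by $K$ being a CM field; once these are settled, both statements drop out immediately.
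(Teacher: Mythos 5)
Your proof is correct and fills in exactly the argument the paper leaves implicit: the paper simply asserts that the lemma ``follows directly from properties 1, 4 and 5 of the norm residue symbol,'' and your decomposition $d=p\,d'$ with property~4 at the unramified primes, together with the product formula and the triviality of the dyadic and archimedean contributions, is that argument spelled out (you also invoke property~2 to get multiplicativity in the second argument, which is indeed needed). No discrepancy with the paper's approach.
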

 % \begin{proof}
  %	\begin{enumerate}[\rm1.]	
  %		\item Let $q$ be a prime dividing $d$. We have
  %		$$\begin{array}{ll}
  %		\displaystyle\left( \frac{ \alpha,\,d}{\mathfrak p}\right)=\prod_{q|d} \left( \frac{ \alpha,\,q}{\mathfrak p}\right)=\left( \frac{ \alpha,\,p}{\mathfrak p}\right)\prod_{q|d\; \& \;q\not= p} \left( \frac{ \alpha,\,q}{\mathfrak p}\right).
 % 		\end{array}$$
  %		If  $\mathfrak p$ does not divide $ q$, then by Proposition \ref{ prop the dicriminant}, $\mathfrak p$ is unramified in $L_{q}$.  Thus  $\displaystyle\left( \frac{\alpha,q}{\mathfrak p}\right)=\left( \frac{q}{\mathfrak p}\right)^0=1$.
 % 		\item The second item follows directly  from Proposition \ref{ prop the dicriminant} and \cite[Lemme 3]{azizi20042}.	\end{enumerate}\end{proof}
  
  \subsection{Case 1: The prime divisors of $d$ are in the same coset of $\mathbb{Z}/8\mathbb{Z}$}~\\
  Let $d$ be an odd positive square-free integer such that the primes $p|d$ are  in the same coset $\pmod 8$.
  \begin{lm}\label{lm symbols 1}Let  $p$ be   a prime such that $p\not\equiv 1\pmod 8$ and denote by $\mathfrak p_K$ any prime ideal of $K$ above $p$.
  	\begin{enumerate}[\rm 1.]
  		\item If $p\equiv 3\pmod 8$, then
  		$\displaystyle\left(\frac{ \varepsilon_2,\, p}{\mathfrak p_{ K}}\right)=-1 \text{ and }
  		\left( \frac{\zeta_{8},\, p}{ \mathfrak p_{K}}\right)=-1.$
  		\item If $p\equiv 5\pmod 8$, then
  		$\displaystyle\left( \frac{\zeta_{8},\, p}{\mathfrak p_{K}}\right)=-1   \text{ and }  \left( \frac{ \varepsilon_2,\, p}{\mathfrak p_{K}}\right)=1.$
  		\item If $p\equiv 7\pmod 8$, then
  		$\displaystyle\left( \frac{ \varepsilon_2,\, p}{\mathfrak p_K}\right)=1 \text{ and }
  		\left( \frac{\zeta_{8},\, p}{\mathfrak p_K}\right)=1.$	
  	\end{enumerate}
  \end{lm}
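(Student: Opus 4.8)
\noindent The plan is to turn each norm residue symbol into an ordinary quadratic residue symbol in the residue field at $\mathfrak p_K$, and then to settle a squareness question in a finite field.

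First I would fix the local picture at $p$. Since $p\not\equiv 1\pmod 8$, the cyclotomic reciprocity computation underlying Theorem \ref{prop number of ramified primes} shows that $p$ is unramified in $K$ with exactly two primes above it, each of residue degree $2$; hence the residue field at $\mathfrak p_K$ is $\mathbb{F}_{p^2}$ and $(p)=\mathfrak p_K\mathfrak p_K'$ with $\mathfrak p_K$ occurring to the first power. For a unit $\alpha\in\{\zeta_8,\varepsilon_2\}$ the extension $K(\sqrt\alpha)/K$ is unramified at $\mathfrak p_K$ (as $\alpha$ is a unit and $p$ is odd), so the symmetry (property 2) together with property 3 of the norm residue symbol gives
$$\left(\frac{\alpha,\,p}{\mathfrak p_K}\right)=\left(\frac{p,\,\alpha}{\mathfrak p_K}\right)=\left(\frac{\alpha}{\mathfrak p_K}\right),$$
which equals $+1$ exactly when the reduction $\overline\alpha$ is a square in the cyclic group $\mathbb{F}_{p^2}^*$ of order $p^2-1$. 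The lemma thus reduces to two squareness tests.

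For $\zeta_8$, its reduction is a primitive $8$-th root of unity, of order $8$ in $\mathbb{F}_{p^2}^*$, and such an element lies in $(\mathbb{F}_{p^2}^*)^2$ iff $8\mid (p^2-1)/2$, i.e.\ iff $16\mid p^2-1$. Checking $p\bmod 8$ shows this happens precisely for $p\equiv\pm1\pmod 8$, which gives $\left(\frac{\zeta_8,p}{\mathfrak p_K}\right)=-1$ for $p\equiv 3,5$ and $+1$ for $p\equiv 7$. For $\varepsilon_2=1+\sqrt2$ I would split on whether $2$ is a square mod $p$. If $p\equiv 7\pmod 8$ then $2$ is a quadratic residue mod $p$, so $\overline{\sqrt2}\in\mathbb{F}_p$ and $\overline{\varepsilon_2}\in\mathbb{F}_p^*$; since $(p-1)\mid (p^2-1)/2$, every element of $\mathbb{F}_p^*$ is a square in $\mathbb{F}_{p^2}^*$, whence the symbol is $+1$. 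If $p\equiv 3,5\pmod 8$ then $2$ is a non-residue, so $\overline{\sqrt2}\notin\mathbb{F}_p$ and the Frobenius sends $\overline{\sqrt2}\mapsto(\overline{\sqrt2})^{p}=-\overline{\sqrt2}$; hence
$$\overline{\varepsilon_2}^{\,p+1}=(1+\overline{\sqrt2})(1-\overline{\sqrt2})=1-2=-1\quad\text{in }\mathbb{F}_{p^2},$$
so $\overline{\varepsilon_2}^{\,(p^2-1)/2}=(-1)^{(p-1)/2}$. This is $-1$ for $p\equiv 3\pmod 4$ and $+1$ for $p\equiv 1\pmod 4$, i.e.\ $-1$ for $p\equiv 3$ and $+1$ for $p\equiv 5$, exactly as stated.

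The main obstacle is the $\varepsilon_2$ computation: one must correctly identify the Frobenius action on $\overline{\sqrt2}$ in the non-residue cases and then track the resulting sign $(-1)^{(p-1)/2}$. Everything else is bookkeeping with the residue-symbol properties recalled in \S\ \ref{6} and the elementary structure of $\mathbb{F}_{p^2}^*$.
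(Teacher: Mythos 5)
Your argument is correct. The first step --- using properties 2 and 3 to convert $\left(\frac{\alpha,\,p}{\mathfrak p_K}\right)$ into the power residue symbol $\left(\frac{\alpha}{\mathfrak p_K}\right)$ at the unramified prime $\mathfrak p_K$, which occurs with exponent $1$ in $(p)$ --- is the same reduction the paper performs implicitly. Where you diverge is in evaluating that residue symbol. The paper factors $\zeta_8=(1+i)\cdot\frac{1}{\sqrt2}$ and pushes each factor down to the quadratic subfields $\mathbb{Q}(i)$ and $\mathbb{Q}(\sqrt2)$ (using that the primes of those subfields above $p$ split in $K$, so the residue symbols agree), then quotes classical evaluations such as $\left(\frac{1+\sqrt2}{\mathfrak p_{\mathbb{Q}(\sqrt2)}}\right)=\left(\frac{-1}{p}\right)$. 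You instead work directly in the residue field $\mathbb{F}_{p^2}$ and apply Euler's criterion: the order-$16$ divisibility test for $\zeta_8$ and the computation $\overline{\varepsilon_2}^{\,p+1}=N(\varepsilon_2)=-1$ via Frobenius are both correct, and the latter is really the same fact that underlies the paper's $\left(\frac{-1}{p}\right)$ step, just made explicit. Your version is more self-contained (no appeal to supplementary-law-type formulas in the subfields) at the cost of having to justify carefully that the residue degree is $2$ for all $p\not\equiv 1\pmod 8$ and that the norm residue symbol at an unramified prime is detected by squareness in the residue field; the paper's version is shorter once the subfield evaluations are taken as known. Both yield the stated table of values.
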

  \begin{proof}	 Suppose that $p$ is congruent to $3\pmod 8$.
  	Let $\mathfrak p_{\mathbb{Q}( i)}$ and $\mathfrak p_{\mathbb{Q}(\sqrt{2})}$ be the prime ideals of $\mathbb{Q}(i)$ and $\mathbb{Q}(\sqrt{2})$ respectively  above $p$. Note that these two prime ideals are totally decomposed in $K$. As
  	$p$ is inert in both $\mathbb{Q}(i)$ and  $\mathbb{Q}(\sqrt{2})$,  then
  	$$\begin{array}{ll}
  	\left(\frac{\zeta_{8},\,p}{ \mathfrak p_K}\right)&=\left( \frac{1+i,\, p}{ \mathfrak p_K}\right)\left( \frac{\frac{1}{\sqrt 2},\, p}{ \mathfrak p_K}\right)=\left( \frac{1+i,\, p}{ \mathfrak p_K}\right)\left( \frac{\sqrt 2,\, p}{ \mathfrak p_K}\right)\\
  	&=\left( \frac{1+i}{ \mathfrak p_K}\right)\left( \frac{\sqrt 2}{ \mathfrak p_K}\right)
  	=\left( \frac{1+i}{ \mathfrak p_{\mathbb{Q}(i)}}\right)\left( \frac{\sqrt 2}{ \mathfrak p_ {\mathbb{Q}(\sqrt 2)}}\right)\\
  	&=- \left( \frac{2}{ p}\right)^2=-1,
  	\end{array}$$
  	$$\text{and }\begin{array}{ll}
  	\left( \frac{1+\sqrt{2},\, p}{  \mathfrak p_K}\right)=\left( \frac{1+\sqrt{2}}{  \mathfrak p_K}\right)=\left( \frac{1+\sqrt{2}}{  \mathfrak p_{\mathbb{Q}(\sqrt{2})}}\right)=\left( \frac{-1}{ p}\right)=-1.
  	\end{array}$$
  	We  similarly   prove the other cases.
  \end{proof}

  \begin{theorem}\label{thm p 3 mod 8}
  	Let  $p>2$ be   a prime such that $p\not\equiv 1\pmod 8$.
  	\begin{enumerate}[\rm 1.]
  		\item If $p\equiv 3$ or $5\pmod 8$, then the  $2$-class group of $L_p$
  		is trivial, i.e., $r_2(p)=0.$
  		\item If $p\equiv 7\pmod 8$, then \;the  $2$-class group of \;$L_p$
  		is   \;cyclic nontrivial, i.e., $r_2(p)=1.$
  	\end{enumerate}
  \end{theorem}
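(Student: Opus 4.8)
The plan is to read off the rank directly from the relation $r_2(p)=t_p-1-e_p$ furnished by \eqref{egalite du 2-rang   =t-1-e}, since both $t_p$ and $e_p$ are almost entirely determined by the preceding lemmas. First I would pin down $t_p$. Because $p\not\equiv 1\pmod 8$, Theorem \ref{prop number of ramified primes} (with $r=1$, $q=0$) produces exactly two finite prime ideals $\mathfrak p_1,\mathfrak p_2$ of $K$ above $p$, and by Proposition \ref{ prop the dicriminant} these are precisely the primes of $K$ ramified in $L_p/K$. Since $K=\mathbb{Q}(\zeta_8)$ is totally imaginary, it has no real infinite place, so no infinite prime of $K$ ramifies in $L_p/K$. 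Hence $t_p=2$ and the rank formula collapses to $r_2(p)=1-e_p$.

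The core of the argument is then the computation of $e_p$, that is, the index $(E_K:E_K\cap N(L_p))$. By Lemma \ref{lm unite zeta8} we have $E_K=\langle\zeta_8,\varepsilon_2\rangle$, and by the criterion of \cite{azizi20042} a unit $\alpha$ is a norm from $L_p$ exactly when $\left(\frac{\alpha,p}{\mathfrak p_i}\right)=1$ at every ramified prime $\mathfrak p_i$; the hypothesis of that lemma holds trivially for units because $\alpha\mathcal O_K=\mathcal O_K=N(\mathcal O_{L_p})$. The key observation is that Lemma \ref{lm symbols 1} assigns a value to $\left(\frac{\alpha,p}{\mathfrak p_K}\right)$ for $\alpha\in\{\zeta_8,\varepsilon_2\}$ that depends only on $p\bmod 8$ and not on the chosen prime $\mathfrak p_K$ above $p$; by multiplicativity the symbols at $\mathfrak p_1$ and $\mathfrak p_2$ therefore agree for every unit. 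Consequently the homomorphism $\phi\colon E_K\to\{\pm 1\}$ defined by $\phi(\alpha)=\left(\frac{\alpha,p}{\mathfrak p_1}\right)$ has kernel exactly $E_K\cap N(L_p)$, so that $2^{e_p}=\lvert\,\mathrm{im}\,\phi\,\rvert$.

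It remains only to evaluate $\phi$ on the generators $\zeta_8$ and $\varepsilon_2$ using the three parts of Lemma \ref{lm symbols 1}. For $p\equiv 3\pmod 8$ one gets $\phi(\zeta_8)=\phi(\varepsilon_2)=-1$, and for $p\equiv 5\pmod 8$ one gets $\phi(\zeta_8)=-1$, $\phi(\varepsilon_2)=1$; in both cases $\phi$ is surjective, whence $e_p=1$ and $r_2(p)=0$, which is part (1). For $p\equiv 7\pmod 8$ both generators map to $1$, so $\phi$ is trivial, $e_p=0$ and $r_2(p)=1$, giving part (2).

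I do not anticipate a serious difficulty, as the ramification count and Lemmas \ref{lm unite zeta8} and \ref{lm symbols 1} carry the computation. The only point needing care is the reduction of the norm test to a single prime above $p$: one must justify that the residue symbols at $\mathfrak p_1$ and $\mathfrak p_2$ coincide on all of $E_K$, so that $e_p$ equals the rank of $\mathrm{im}\,\phi$ rather than of the full pair of symbols. This follows either from the prime-independence of the values in Lemma \ref{lm symbols 1}, or, alternatively, from Lemma \ref{lemma norm residue symbol pp}(2), which forces the product of the two symbols to equal $1$ and hence (each being $\pm1$) forces them to be equal.
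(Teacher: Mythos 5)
Your proposal is correct and follows essentially the same route as the paper: it counts the ramified primes via Theorem \ref{prop number of ramified primes} to get $t_p=2$, reduces to $r_2(p)=1-e_p$, and reads off $e_p$ from the symbol values in Lemma \ref{lm symbols 1}. The only difference is that you spell out the justification (via the product formula or the prime-independence of the symbols) that the two primes above $p$ give the same value, a point the paper leaves implicit.
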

  \begin{proof}
  	By Theorem  \ref{prop number of ramified primes}, we have $r_2(p)= 2-1- e_p=1-e_p$,
  	thus  $e_p\in\{0,1\}$. According to the previous Lemma \ref{lm symbols 1}, we have  $e_p= 1$ if $p\equiv 3$ or $5\pmod 8$, and $e_p= 0$ otherwise. Hence,  the results.
  \end{proof}

  \begin{theorem}\label{thm div con 3}
  	Let $d>2$ be a composite odd   square-free integer. Denote by $r$ the number of distinct primes dividing $d$.
  	\begin{enumerate}[\rm 1.]
  		\item If  all the primes  dividing $d$ are congruent to $3\pmod 8$ $($resp. $5\pmod 8$$)$,   then $r_2(d)=2r-2.$
  		\item If  the primes dividing  $d$ are congruent to $7\pmod 8$, then $r_2(d)=2r-1.$
  	\end{enumerate}
  	
  \end{theorem}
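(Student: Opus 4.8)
The plan is to apply the rank formula \eqref{egalite du 2-rang   =t-1-e}, namely $r_2(d)=t_d-1-e_d$, and to pin down the two quantities on the right separately. For $t_d$ I observe that every prime dividing $d$ is $\not\equiv 1\pmod 8$, so in the notation of Theorem \ref{prop number of ramified primes} we have $q=0$ and the number of finite primes of $K$ ramifying in $L_d$ is $2r$; since $K=\mathbb{Q}(\zeta_8)$ is totally complex, no infinite prime ramifies, whence $t_d=2r$. Thus $r_2(d)=2r-1-e_d$, and everything reduces to computing the unit-index exponent $e_d\in\{0,1,2\}$.

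To compute $e_d$ I would use that $E_K=\langle\zeta_8,\varepsilon_2\rangle$ by Lemma \ref{lm unite zeta8}, together with the fact that $N_{L_d/K}(x)=x^2$ for $x\in K^\ast$, so that $(E_K)^2\subseteq E_K\cap N(L_d)$ and the quotient $E_K/(E_K\cap N(L_d))$ is an elementary abelian $2$-group generated by the classes of $\zeta_8$ and $\varepsilon_2$. By the norm criterion (the lemma of \cite{azizi20042}), which applies since $\alpha\mathcal{O}_K=\mathcal{O}_K$ is trivially a norm of an ideal when $\alpha$ is a unit, a unit is a norm from $L_d$ exactly when all the symbols $\left(\frac{\alpha,d}{\mathfrak p}\right)$ with $\mathfrak p\mid d$ equal $1$. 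Hence $e_d$ is the $\mathbb{F}_2$-dimension of the image of
$$E_K/(E_K)^2\longrightarrow\{\pm1\}^{\{\mathfrak p\mid d\}},\qquad \alpha\longmapsto\left(\left(\tfrac{\alpha,d}{\mathfrak p}\right)\right)_{\mathfrak p\mid d}.$$
By Lemma \ref{lemma norm residue symbol pp}(1) each symbol reduces to $\left(\frac{\alpha,p}{\mathfrak p}\right)$ with $p$ the rational prime below $\mathfrak p$, so every value is supplied by Lemma \ref{lm symbols 1}.

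Now I would split into the two cases. If all $p_i\equiv 3\pmod 8$, then at every $\mathfrak p\mid d$ both $\left(\frac{\zeta_8,p_i}{\mathfrak p}\right)$ and $\left(\frac{\varepsilon_2,p_i}{\mathfrak p}\right)$ equal $-1$; hence $\zeta_8$ and $\varepsilon_2$ map to the same nontrivial vector while $\zeta_8\varepsilon_2$ maps to the trivial one, so the image is one-dimensional and $e_d=1$. If all $p_i\equiv 5\pmod 8$, then $\left(\frac{\zeta_8,p_i}{\mathfrak p}\right)=-1$ and $\left(\frac{\varepsilon_2,p_i}{\mathfrak p}\right)=1$, so $\varepsilon_2$ is a norm, $\zeta_8$ is not, and again the image is one-dimensional, $e_d=1$. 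In both sub-cases \eqref{egalite du 2-rang   =t-1-e} gives $r_2(d)=2r-2$, which is assertion (1). If all $p_i\equiv 7\pmod 8$, then both symbols equal $1$ at every $\mathfrak p\mid d$, so $\zeta_8$ and $\varepsilon_2$ are both norms, the image is trivial, $e_d=0$, and $r_2(d)=2r-1$, which is assertion (2).

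The only step needing genuine care is the reduction in the second paragraph: one must check that the symbol map is well defined on $E_K/(E_K)^2$ and that its kernel is exactly $E_K\cap N(L_d)$, so that $e_d$ is literally the rank of the resulting $\{\pm1\}$-valued matrix. Everything after that is bookkeeping with Lemma \ref{lm symbols 1}; as an internal consistency check, the product relation Lemma \ref{lemma norm residue symbol pp}(2) holds automatically, since in each case the value $-1$ occurs at an even number ($2r$) of the primes above $d$.
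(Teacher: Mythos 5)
Your proposal is correct and follows essentially the same route as the paper: compute $t_d=2r$ from Theorem \ref{prop number of ramified primes} (since no prime divisor is $\equiv 1\pmod 8$), then determine $e_d$ from the symbol values of Lemma \ref{lm symbols 1} via Lemmas \ref{lemma norm residue symbol pp} and the norm criterion, obtaining $e_d=1$ in the $3,5\pmod 8$ cases (where respectively $\overline{\zeta_8}=\overline{\varepsilon_2}$ or $\varepsilon_2$ is a norm) and $e_d=0$ in the $7\pmod 8$ case. Your reformulation of $e_d$ as the $\mathbb{F}_2$-dimension of the image of the symbol map is just a slightly more formal packaging of the paper's explicit determination of $E_K/(E_K\cap N(L_d))$.
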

  \begin{proof}Assume that all the primes $p|d$ are congruent to $3\pmod 8$.
  	Let   $\mathfrak p_K$ be a prime ideal of $K$ above $p$.	By Lemmas \ref{lemma norm residue symbol pp} and \ref{lm symbols 1}, we have
  	$$\left(\frac{\zeta_{8},\, d}{\mathfrak p_K}\right)=\left(\frac{\zeta_{8},\, p}{\mathfrak p_K}\right)=-1 \text{ and  }\left(\frac{ \varepsilon_2,\, d}{\mathfrak p_K}\right)=\left(\frac{ \varepsilon_2,\, p}{\mathfrak p_K}\right)=-1,$$	 thus $\left(\frac{\zeta_{8}\varepsilon_2,\, d}{\mathfrak p_K}\right)=1.$
  	It follows that $\overline{ \zeta_{8}}=\overline{\varepsilon_2}$  in $E_{K}/(E_{K}\cap N(L_p))$. We infer that  $E_{K}/(E_{K}\cap N(L_p))=\{\overline{1}, \overline{ \zeta_{8}}\}$  and  $e_d=1$. By Theorem \ref{prop number of ramified primes},  the number of prime ideals of $K$ ramified in $L_{d}$ is $2r$. Hence,
  	$r_2(d)=2r-1-1=2r-2.$
  	
  	The other cases are similarly proved.
  \end{proof}
  
  In what follows, we will compute $r_2(d)$ when all the primes   $p|d$ are congruent to $1\pmod 8$. For this, we need the following lemmas.
  
  \begin{lm}\label{lm e_p with p=1 mod 8}
  	Let $p$ be a prime such that  $p\equiv 1 \pmod 8$.  Then $e_p=0 \text{ or } 1$. More precisely,
  	\begin{enumerate}[\rm1.]
  		\item $e_p= 0$  if and only if $p\equiv 1 \pmod{16}$ and $\left(\frac{2}{ p}\right)_4=\left(\frac{p}{ 2}\right)_4$.
  		\item $e_p= 1$ if and only if
  		$p\equiv 9 \pmod{16}$ or $\left(\frac{2}{ p}\right)_4\not=\left(\frac{p}{ 2}\right)_4.$
  	\end{enumerate}
  \end{lm}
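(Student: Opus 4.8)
The plan is to compute the two norm residue symbols $\left(\frac{\zeta_8,\,p}{\mathfrak p}\right)$ and $\left(\frac{\varepsilon_2,\,p}{\mathfrak p}\right)$ at the primes $\mathfrak p$ of $K$ above $p$, and to read off $e_p$ from them. First, since $p\equiv 1\pmod 8$ the prime $p$ splits completely in $K$, so by Theorem~\ref{prop number of ramified primes} there are exactly four primes $\mathfrak p_1,\dots,\mathfrak p_4$ of $K$ above $p$, all ramifying in $L_p/K$, and each completion $K_{\mathfrak p_i}$ is isomorphic to $\mathbb Q_p$. By the norm criterion of \cite{azizi20042} recalled at the start of this section, a unit $\alpha\in E_K$ lies in $E_K\cap N(L_p)$ if and only if $\left(\frac{\alpha,\,p}{\mathfrak p_i}\right)=1$ for every $i$. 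Since $E_K^2\subseteq E_K\cap N(L_p)$ (units satisfy $\alpha^2=N_{L_p/K}(\alpha)$) and $E_K/E_K^2=\langle\overline{\zeta_8},\overline{\varepsilon_2}\rangle$ by Lemma~\ref{lm unite zeta8}, the integer $e_p$ is the $\mathbb F_2$-dimension of the image of $E_K/E_K^2$ under $\alpha\mapsto\big(\left(\frac{\alpha,\,p}{\mathfrak p_i}\right)\big)_i$, whence $e_p\le 2$.

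Second, I would upgrade this to $e_p\le 1$ by showing that, for each fixed $\alpha\in\{\zeta_8,\varepsilon_2\}$, the four symbols $\left(\frac{\alpha,\,p}{\mathfrak p_i}\right)$ coincide. The group $\mathrm{Gal}(K/\mathbb Q)$ permutes the $\mathfrak p_i$ transitively and fixes $p$, so it suffices to check that $\sigma(\alpha)/\alpha$ is a local square at each $\mathfrak p_i$ for every $\sigma\in\mathrm{Gal}(K/\mathbb Q)$. This is immediate: these quotients lie in $\langle i,\varepsilon_2^2\rangle$ (for instance $\sigma(\varepsilon_2)/\varepsilon_2\in\{1,-\varepsilon_2^{-2}\}$ and $\sigma(\zeta_8)/\zeta_8\in\{\pm1,\pm i\}$), and each such element is a square in $K_{\mathfrak p_i}\cong\mathbb Q_p$ because $\zeta_8\in\mathbb Q_p$ and $-1=i^2$. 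Hence the image above lies in the diagonal and $e_p\in\{0,1\}$, with $e_p=0$ precisely when both symbols equal $1$.

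Third comes the evaluation of the two symbols, which I expect to be the technical heart. Because $p$ is a uniformizer at $\mathfrak p$ and the residue characteristic is odd, for a unit $\alpha$ one has $\left(\frac{\alpha,\,p}{\mathfrak p}\right)=1$ iff $\alpha$ is a square in $K_{\mathfrak p}\cong\mathbb Q_p$. For $\zeta_8$ this means $\zeta_{16}\in\mathbb Q_p$, i.e.\ $16\mid p-1$; thus $\left(\frac{\zeta_8,\,p}{\mathfrak p}\right)=1$ iff $p\equiv 1\pmod{16}$, which is exactly the condition $\left(\frac{p}{2}\right)_4=1$ in the normalization $\left(\frac p2\right)_4=(-1)^{(p-1)/8}$. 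For $\varepsilon_2$ I would use $\varepsilon_2=(2+\sqrt2)/\sqrt2$ and property~1 to factor $\left(\frac{\varepsilon_2,\,p}{\mathfrak p}\right)=\left(\frac{2+\sqrt2,\,p}{\mathfrak p}\right)\left(\frac{\sqrt2,\,p}{\mathfrak p}\right)$. The second factor equals $2^{(p-1)/4}\bmod p=\left(\frac2p\right)_4$, since $\sqrt2$ reduces to an element whose square is $2$; the first factor is $1$ iff $2+\sqrt2$ is a local square, i.e.\ iff $\mathfrak p$ splits in $\mathbb Q(\zeta_{16})^+=\mathbb Q(\sqrt{2+\sqrt2})$, which for $p\equiv 1\pmod 8$ happens iff $p\equiv 1\pmod{16}$; hence $\left(\frac{2+\sqrt2,\,p}{\mathfrak p}\right)=\left(\frac p2\right)_4$ and $\left(\frac{\varepsilon_2,\,p}{\mathfrak p}\right)=\left(\frac2p\right)_4\left(\frac p2\right)_4$.

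Finally, assembling these: $e_p=0$ holds iff $\left(\frac{\zeta_8,\,p}{\mathfrak p}\right)=\left(\frac{\varepsilon_2,\,p}{\mathfrak p}\right)=1$, that is iff $\left(\frac p2\right)_4=1$ and $\left(\frac2p\right)_4\left(\frac p2\right)_4=1$, equivalently $p\equiv1\pmod{16}$ and $\left(\frac2p\right)_4=\left(\frac p2\right)_4$; otherwise $e_p=1$. This gives statements~1 and~2. The delicate point is the first factor of the $\varepsilon_2$-symbol: correctly pinning down the splitting of $\mathfrak p$ in the real cyclotomic field $\mathbb Q(\zeta_{16})^+$ (equivalently, the Frobenius in the cyclic quartic group $\mathrm{Gal}(\mathbb Q(\zeta_{16})^+/\mathbb Q)$) and matching the sign conventions of the rational quartic symbols $\left(\frac2p\right)_4$ and $\left(\frac p2\right)_4$.
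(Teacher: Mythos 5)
Your proposal is correct, and it arrives at exactly the values recorded in Remark \ref{corr of thm de Ssi Zekhnini}, namely $\left(\frac{\zeta_8,\,p}{\mathfrak p}\right)=(-1)^{(p-1)/8}$ and $\left(\frac{\varepsilon_2,\,p}{\mathfrak p}\right)=\left(\frac{2}{p}\right)_4\left(\frac{p}{2}\right)_4$, but the route to them is genuinely different. The paper factors $\zeta_8=(1+i)/\sqrt2$ and then quotes identities from Lemmermeyer, Kaplan and Brown--Parry tied to the representation $p=a^2+b^2$ (namely $\left(\frac{1+i}{\mathfrak p}\right)=\left(\frac{2}{a+b}\right)=\left(\frac 2p\right)_4\left(\frac p2\right)_4=\left(\frac{\varepsilon_2}{\mathfrak p_{\mathbb Q(\sqrt2)}}\right)$ and $\left(\frac{\varepsilon_2\sqrt2}{\mathfrak p}\right)=(-1)^{(p-1)/8}$), whereas you work entirely locally: since $p$ splits completely, $K_{\mathfrak p}\cong\mathbb Q_p$, the tame Hilbert symbol reduces everything to deciding whether $\zeta_8$, $\sqrt2$ and $2+\sqrt2$ are squares in $\mathbb Q_p$, i.e.\ to the splitting of $p$ in $\mathbb Q(\zeta_{16})$ and $\mathbb Q(\zeta_{16})^+$ and to $2^{(p-1)/4}\bmod p$. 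Your computation checks out (with the normalization $\left(\frac p2\right)_4=(-1)^{(p-1)/8}$, which is the one the paper uses), and it has the advantage of being self-contained, at the cost of the bookkeeping you flag at the end. Your treatment of the bound $e_p\le 1$ is also slightly stronger than the paper's: the paper simply observes a posteriori that the computed symbols do not depend on the choice of $\mathfrak p_K$ above $p$, so that $\zeta_8\varepsilon_2$ is always a norm, while you prove the independence a priori by a Galois-equivariance argument, checking that $\sigma(\alpha)/\alpha$ is a local square at every $\mathfrak p_i$ for $\alpha\in\{\zeta_8,\varepsilon_2\}$; both arguments are valid, but yours would survive even if the explicit evaluation were organized differently.
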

  \begin{proof}
  	%Recall that,  by Theorem \ref{prop number of ramified primes}, $r_2(p)=4-1-e=3-e$.
  	Let $p\in \mathfrak{p}_{\mathbb{Q}(i)}\subset \mathfrak{p}_K$ be two prime ideals of $\mathbb{Q}(i)$ and $K$ respectively. We shall  calculate the two symbols $\left(\frac{\zeta_8,\, p}{\mathfrak{p}_K}\right)$ and $\left(\frac{\varepsilon_2,\, p}{\mathfrak{p}_K}\right)$.
  	We have
  	$$\begin{array}{lll}	
  	\left(\frac{\zeta_8,\, p}{\mathfrak{p}_K}\right)&=&\left(\frac{1+i,\, p}{\mathfrak{p}_K}\right)\left(\frac{\frac{1}{\sqrt{2}},\, p}{\mathfrak{p}_K}\right)
  	= \left(\frac{1+i}{\mathfrak{p}_K}\right)\left(\frac{\sqrt{2},\, p}{\mathfrak{p}_K}\right)\\
  	&=&\left(\frac{1+i}{\mathfrak{p}_K}\right)\left(\frac{\sqrt{2},\, p}{\mathfrak{p}_K}\right)
  	=\left(\frac{2}{a+b}\right)\left(\frac{\sqrt{2},\, p}{\mathfrak{p}_K}\right),
  	\end{array}$$
  	where $a$ and $b$ are two integers such that $p=a^2+b^2$ (see \cite[page 154]{lemmermeyer2013reciprocity}). Since $\left(\frac{\varepsilon_2\sqrt{2},\, p}{\mathfrak{p}_K}\right)=\left(\frac{\varepsilon_2\sqrt{2}}{\mathfrak{p}_K}\right)=\left(\frac{\varepsilon_2\sqrt{2}}{\mathfrak{p}_{\mathbb{Q}(\sqrt{2})}}\right)=(-1)^{\frac{p-1}{8}},$ where $\mathfrak{p}_{\mathbb{Q}(\sqrt{2})}$ is an ideal of $\mathbb{Q}(\sqrt{2})$ lying over $p$ (see \cite[page 21]{brown19782}),  then
  	$$\begin{array}{ll}
  	\left(\frac{\sqrt{2},\, p}{\mathfrak{p}_K}\right)=(-1)^{\frac{p-1}{8}}\left(\frac{ \varepsilon_2,\, p}{\mathfrak{p}_K}\right)
  	=(-1)^{\frac{p-1}{8}}\left(\frac{ \varepsilon_2}{\mathfrak{p}_{  K}}\right)=(-1)^{\frac{p-1}{8}}\left(\frac{ \varepsilon_2}{\mathfrak{p}_{\mathbb{Q}(\sqrt{2})}}\right).
  	\end{array}$$
  	On the other hand, by  \cite[page 323]{kaplan76} and  \cite[page 160]{lemmermeyer2013reciprocity}, we have
  	$$\begin{array}{ll}\left(\frac{2}{a+b}\right)=\left(\frac{2}{p}\right)_4\left(\frac{p}{2}\right)_4=\left(\frac{ \varepsilon_2}{\mathfrak{p}_{\mathbb{Q}(\sqrt{2})}}\right).\end{array}$$
  	Hence
  	$$\begin{array}{ll}\left(\frac{\zeta_8,\, p}{\mathfrak{p}_K}\right)=	(-1)^{\frac{p-1}{8}}.\end{array}$$
  	If $\zeta_{8}$ and $\varepsilon_2$ are not norms in $L_p/K$,  we claim that $\overline{\zeta_{8}}=\overline{\varepsilon_2}$ in $E_{K}/(E_{K}\cap N(L_p))$. Indeed  $\left(\frac{\zeta_8,\, p}{\mathfrak{p}_{K}}\right)$ and $\left(\frac{\varepsilon_2,\, p}{\mathfrak{p}_K}\right)$ do not depend on    $\mathfrak{p}_K$   so       $\left(\frac{\zeta_{8}  \varepsilon_2,\, p}{\mathfrak{p}_{K}}\right)=1$ for all $\mathfrak{p}_K$. It follows that $\overline{\zeta_{8}  \varepsilon_2} =\overline{1}$, then  $\overline{\zeta_{8} }=\overline{  \varepsilon_2}$ as claimed.
  \end{proof}
  
  \noindent From the previous proof, we have the following remarks.
  \begin{rema}\label{corr of thm de Ssi Zekhnini}
  	Let $p$ be a prime such that $p\equiv 1 \pmod 8$. Then, for any prime ideal $\mathfrak p_K$ of $K$ above $p$, we have
  	$\displaystyle\left(\frac{\zeta_8,\, p}{\mathfrak{p}_K}\right)=	(-1)^{\frac{p-1}{8}} \text{and } \left(\frac{ \varepsilon_2,\, p}{\mathfrak{p}_K}\right)=\left(\frac{2}{p}\right)_4\left(\frac{p}{2}\right)_4.$
  %	Thus
  %	\begin{enumerate}[\rm1.]
  	%	\item $\varepsilon_2$ is a norm in $L_p/K$ if and
  %		only if $\left(\frac{2}{p}\right)_4=
  %		\left(\frac{p}{ 2}\right)_4.$
  %		\item $\zeta_{8}$ is a norm in $L_p/K$ if and only if $p\equiv 1 \pmod{16}$.
  %	\end{enumerate}
  %	Furthermore, $\zeta_{8}$ $($resp. $\varepsilon_2$$)$ is a norm in $L_p/K$ if and only if $\left(\frac{\zeta_8,\, p}{\mathfrak{p}_{K}}\right)=1$ $($resp. $\left(\frac{\varepsilon_2,\, p}{\mathfrak{p}_K}\right)=1$$)$ for some prime ideal $\mathfrak{p}_K$ of $K$ above $p$. In fact, 
 % 	the value of $\left(\frac{\zeta_8,\, p}{\mathfrak{p}_{K}}\right)$ $($resp. $\left(\frac{\varepsilon_2,\, p}{\mathfrak{p}_K}\right))$ is  the same for any prime ideal  $\mathfrak{p}_K$ of $K$ above $p$.
  \end{rema}
  
  \begin{lm}\label{lm  les unites sont dans le mm classe}
  	Let $d=p_1...p_r>2$ be an odd  composite square-free positive integer  such that $p_i\equiv 1 \pmod 8$ for all $i$. Consider the  following assertions:
  	\begin{enumerate}[\rm$(a)$]
  		\item  $\zeta_8$ and $\varepsilon_2$ are not norms in $L_{p_i}/K$ and $L_{p_j}/K$  respectively  for some $i\not=j$.
  		\item  $\zeta_{8}$ is a norm in $L_{p_j}/K$ or  $\varepsilon_2$ is a norm in $L_{p_i}/K$.
  		\item $\zeta_8$ and $\varepsilon_2$ are not norms in $L_d/K$.
  		\item $\overline{\zeta_{8}}\not=\overline{\varepsilon_2}$ in $E_{K}/(E_{K}\cap N(L_d))$.
  	\end{enumerate}
  	Then  $($a$)$ and $($b$)$ hold if and only if  $($c$)$ and $($d$)$ hold.	
  \end{lm}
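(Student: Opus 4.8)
The plan is to attach to each prime $p_i$ a pair of signs, reduce the four assertions to a purely boolean statement about these signs, and then settle the equivalence by inspection.

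\emph{Step 1 (dictionaries).} For each $i$ and any prime $\mathfrak p_K$ of $K$ above $p_i$, I set $x_i:=\left(\frac{\zeta_8,\,p_i}{\mathfrak p_K}\right)$ and $y_i:=\left(\frac{\varepsilon_2,\,p_i}{\mathfrak p_K}\right)$; by Remark \ref{corr of thm de Ssi Zekhnini} these do not depend on the choice of $\mathfrak p_K\mid p_i$, so $x_i,y_i\in\{\pm1\}$ are well defined. Since $\zeta_8$ and $\varepsilon_2$ are units, $\zeta_8\mathcal O_K$ and $\varepsilon_2\mathcal O_K$ are trivially norms of fractional ideals, so the norm criterion of \cite{azizi20042} applies; combined with the fact that the primes of $K$ ramifying in $K(\sqrt{p_i})$ (resp.\ in $L_d$) are exactly the divisors of $p_i$ (resp.\ of $d$) by Proposition \ref{ prop the dicriminant}, and with Lemma \ref{lemma norm residue symbol pp}(1), this yields the four translations: $\zeta_8$ is a norm in $L_{p_i}/K$ iff $x_i=1$; $\varepsilon_2$ is a norm in $L_{p_i}/K$ iff $y_i=1$; $\zeta_8$ is a norm in $L_d/K$ iff $x_i=1$ for all $i$; and $\varepsilon_2$ is a norm in $L_d/K$ iff $y_i=1$ for all $i$. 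For assertion (d), multiplicativity (property 1 of the symbol) gives $\left(\frac{\zeta_8\varepsilon_2^{-1},\,d}{\mathfrak p_K}\right)=x_iy_i$ for $\mathfrak p_K\mid p_i$, so $\overline{\zeta_8}=\overline{\varepsilon_2}$ in $E_K/(E_K\cap N(L_d))$ iff $\zeta_8\varepsilon_2^{-1}$ is a norm iff $x_i=y_i$ for every $i$.

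\emph{Step 2 (boolean reformulation and forward direction).} Under these dictionaries, (a)$\wedge$(b) reads: there exist $i\neq j$ with $x_i=-1$, $y_j=-1$ and ($x_j=1$ or $y_i=1$); whereas (c)$\wedge$(d) reads: ($\exists\,i:\,x_i=-1$) and ($\exists\,j:\,y_j=-1$) and ($\exists\,k:\,x_k\neq y_k$). If (a)$\wedge$(b) holds, the first two clauses of (c)$\wedge$(d) are immediate from the chosen $i,j$, and (d) is witnessed by $k=j$ when $x_j=1$ (since then $x_j=1\neq-1=y_j$) or by $k=i$ when $y_i=1$ (since then $y_i=1\neq-1=x_i$).

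\emph{Step 3 (converse and main point).} Conversely, assuming (c)$\wedge$(d), I use (d) to pick $c$ with $x_c\neq y_c$ and split into two cases. If $x_c=-1,\,y_c=1$, I set $i=c$, so that $y_i=1$ makes clause (b) automatic, and I take $j$ with $y_j=-1$ from (c); symmetrically, if $x_c=1,\,y_c=-1$, I set $j=c$, so that $x_j=1$ makes (b) automatic, and I take $i$ with $x_i=-1$ from (c). The substantive content is concentrated in Step 1, namely checking that each symbol is independent of the prime $\mathfrak p_K$ chosen above $p_i$ and that the norm criterion legitimately applies to the units $\zeta_8,\varepsilon_2$; once the dictionaries are in place the boolean equivalence is routine, the only point needing care being the distinctness $i\neq j$: in the first case $y_i=1\neq-1=y_j$ forces $j\neq i$, and in the second $x_j=1\neq-1=x_i$ forces $i\neq j$.
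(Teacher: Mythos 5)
Your proof is correct and follows essentially the same route as the paper: both reduce all four assertions to the signs $\left(\frac{\zeta_8,\,p_i}{\mathfrak p_K}\right)$ and $\left(\frac{\varepsilon_2,\,p_i}{\mathfrak p_K}\right)$ via Lemma \ref{lemma norm residue symbol pp}, Remark \ref{corr of thm de Ssi Zekhnini} and the norm criterion, and then argue combinatorially on these signs. The only difference is presentational: your converse is a direct construction of the witnesses $i\neq j$ from a witness of (d), where the paper argues by contradiction; your version makes the bookkeeping (including the verification that $i\neq j$) more transparent.
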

  \begin{proof}
  	Assume that  $(a)$ and $(b)$ hold, then
  	$\left(\frac{\zeta_8,\, p_i}{\mathfrak{p}_{ i}}\right)=-1=\left(\frac{\varepsilon_2,p_j}{\mathfrak{p}_{ j}}\right),$
  	where $\mathfrak p_i$ (resp. $\mathfrak p_j$ ) is a prime ideal of $K$ lying above $p_i$ (resp. $p_j$).
  	So by Lemma \ref{lemma norm residue symbol pp} we get
  	$$\left(\frac{\zeta_8,\, d}{\mathfrak{p}_{ i}}\right)=\left(\frac{\zeta_8,\, p_i}{\mathfrak{p}_{ i}}\right)=-1=\left(\frac{\varepsilon_2,\, d}{\mathfrak{p}_{ j}}\right)=\left(\frac{\varepsilon_2,p_j}{\mathfrak{p}_{ j}}\right),$$
  	and  $(c)$ follows.
  	If  $\left(\frac{\varepsilon_2,\, p_i}{\mathfrak{p}_{ i}}\right)=1$, then
  	$\left(\frac{\zeta_{8}\varepsilon_2,\, d}{\mathfrak{p}_{ i}}\right)=\left(\frac{\zeta_{8}\varepsilon_2,\, p_i}{\mathfrak{p}_{ i}}\right)=\left(\frac{\zeta_{8},\, p_i}{\mathfrak{p}_{ i}}\right)\left(\frac{\varepsilon_2,\, p_i}{\mathfrak{p}_{ i}}\right)=-1$. Thus $\overline{\zeta_{8}\varepsilon_2}\not=\overline{1}$. Hence $\overline{\zeta_{8}}\not=\overline{\varepsilon_2}$. (We similarly treat the case $\left(\frac{\zeta_{8},\, p_i}{\mathfrak{p}_{ i}}\right)=1$). So the assertion  $(d)$.
  	
  	Conversely, suppose    $(c)$ and $(d)$ hold. Since $\zeta_8$ and $\varepsilon_2$ are not norms in $L_d/K$, then there exist $i$ and $j$ such that $\left(\frac{\zeta_8, p_i}{\mathfrak{p}_{ i}}\right)=-1$ and
  	$\left(\frac{\varepsilon_2,p_j}{\mathfrak{p}_{ j}}\right)=-1$. Suppose this is true only for $i=j$, then for all $k$, we get $\left(\frac{\zeta_8,
  		p_k}{\mathfrak{p}_{ k}}\right)\left(\frac{\varepsilon_2,p_k}{\mathfrak{p}_{ k}}\right)=\left(\frac{\zeta_8\varepsilon_2,
  		p_k}{\mathfrak{p}_{ k}}\right)=1$. So $\zeta_8\varepsilon_2$ is a norm in $L_d/K$, which contradicts $(d)$. Thus $(a)$ holds. Suppose that $(b)$ is not verified, then  for all $i\not=j$ satisfying  $(a)$ and   not verifying $(b)$, we have 
  	$$ \left(\frac{\zeta_8,
  		p_i}{\mathfrak{p}_{ i}}\right)=\left(\frac{\varepsilon_2,\,
  		p_i}{\mathfrak{p}_{ i}}\right)=-1=\left(\frac{\zeta_8,
  		p_j}{\mathfrak{p}_{ j}}\right)=\left(\frac{\varepsilon_2,
  		p_j}{\mathfrak{p}_{ j}}\right).$$
  	\noindent Thus  for all $k$, $\left(\frac{\zeta_8\varepsilon_2,p_k}{\mathfrak{p}_{ k}}\right)=1$, i.e., $\left(\frac{\zeta_8\varepsilon_2,\, d}{\mathfrak{p}}\right)=1$ for all prime $\mathfrak{p}$ of $K$ ramified in $L_{d}$. It follows that,   $\zeta_8\varepsilon_2$ is a norm in $L_{d}/K$, which contradicts  $(d)$. Hence $(b)$ holds too, which ends the proof of the lemma.
  \end{proof}
  
  \noindent  Now we give an  analogous of  Lemma \ref{lm e_p with p=1 mod 8} for a composite square-free integer $d>2$.  	
  \begin{lm}\label{lm e_d with pi=1 mod 8}
  	Let $d=p_1...p_r>2$ be a composite odd  square-free integer such that $p_i\equiv 1 \pmod 8$  for all $ i\in I=\{1, \cdots, r\}$. Then
  	\begin{enumerate}[\rm1.]
  		\item $e_d=0 \Longleftrightarrow\forall i\in I, p_i \equiv 1 \pmod{16}\text{ and }\left(\frac{2}{ p_i}\right)_4=\left(\frac{p_i}{ 2}\right)_4.$
  		\item $e_d=1$ if and only if one of the following assertions holds
  		\begin{enumerate}[\rm i.]
  			\item $\forall i\in I, p_i \equiv 1 \pmod{16}\text{ and } \exists j\in I, \left(\frac{2}{ p_j}\right)_4\not=\left(\frac{p_j}{ 2}\right)_4,$
  			\item $\exists i\in I, p_i \equiv 9 \pmod{16} \text{ and } \forall j\in I, \left(\frac{2}{ p_j}\right)_4=\left(\frac{p_j}{ 2}\right)_4,$
  			\item $\exists(i,j)\in I^2, \left[p_i \equiv 9 \pmod{16} \text{ and }  \left(\frac{2}{ p_j}\right)_4\not=\left(\frac{p_j}{ 2}\right)_4\right]$ and
  			all the couples $(i,j)$ satisfying the last  condition   satisfy also   $\left[\left(\frac{2}{ p_i}\right)_4\not=\left(\frac{p_i}{ 2}\right)_4  \text{ and }    p_j \equiv 9 \pmod{16}\right]$.
  		\end{enumerate}
  		\item $e_d=2$ if and only if there exist    $i\not=j\in I$ such that
  		$p_i \equiv 9 \pmod{16}$, $\left(\frac{2}{ p_j}\right)_4\not=\left(\frac{p_j}{ 2}\right)_4$ and $\left[
  		\left(\frac{2}{ p_i}\right)_4=\left(\frac{p_i}{ 2}\right)_4\text{ or }p_j \equiv 1 \pmod{16}\right]$.
  	\end{enumerate}
  \end{lm}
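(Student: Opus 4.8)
The plan is to translate the computation of $e_d$ into a rank computation over $\mathbb{F}_2$. By Lemma \ref{lm unite zeta8}, $E_K=\langle\zeta_8,\varepsilon_2\rangle$, and since the square of any unit is automatically a norm in $L_d/K$, the quotient $E_K/(E_K\cap N(L_d))$ is an $\mathbb{F}_2$-vector space spanned by $\overline{\zeta_8}$ and $\overline{\varepsilon_2}$; thus $e_d\in\{0,1,2\}$ and equals the dimension of this quotient. The cited Lemma of \cite{azizi20042} together with Lemma \ref{lemma norm residue symbol pp} shows that a unit $\alpha\in E_K$ is a norm in $L_d/K$ if and only if $\left(\frac{\alpha,p_i}{\mathfrak p_i}\right)=1$ for every $i$, where $\mathfrak p_i$ is any prime of $K$ above $p_i$ (the symbol does not depend on the chosen prime above $p_i$, by Remark \ref{corr of thm de Ssi Zekhnini}).

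First I would record, for each $i$, the two values $a_i=\left(\frac{\zeta_8,p_i}{\mathfrak p_i}\right)=(-1)^{(p_i-1)/8}$ and $b_i=\left(\frac{\varepsilon_2,p_i}{\mathfrak p_i}\right)=\left(\frac{2}{p_i}\right)_4\left(\frac{p_i}{2}\right)_4$, both supplied by Remark \ref{corr of thm de Ssi Zekhnini}. Reading $\{\pm1\}$ as $\mathbb{F}_2$, the two vectors $\mathbf a=(a_i)_i$ and $\mathbf b=(b_i)_i$ represent the images of $\overline{\zeta_8}$ and $\overline{\varepsilon_2}$ under $\alpha\mapsto(\left(\frac{\alpha,p_i}{\mathfrak p_i}\right))_i$, whose kernel on $E_K$ is exactly $E_K\cap N(L_d)$; hence $e_d=\dim_{\mathbb{F}_2}\langle\mathbf a,\mathbf b\rangle$. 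Note that $a_i=1\Leftrightarrow p_i\equiv 1\pmod{16}$ and $b_i=1\Leftrightarrow\left(\frac{2}{p_i}\right)_4=\left(\frac{p_i}{2}\right)_4$, so each arithmetic condition in the statement is simply a statement about the entries of $\mathbf a$ and $\mathbf b$.

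With this dictionary the three assertions become rank statements. For (1), $e_d=0$ means $\mathbf a=\mathbf b=0$, i.e. $a_i=b_i=1$ for all $i$, which is exactly the displayed condition. For (3), $e_d=2$ means $\zeta_8$, $\varepsilon_2$ and $\zeta_8\varepsilon_2$ are all non-norms, i.e. conditions (c) and (d) of Lemma \ref{lm  les unites sont dans le mm classe} both hold; that lemma rewrites this as (a) and (b), which translated through the dictionary are precisely: there exist $i\neq j$ with $a_i=-1$, $b_j=-1$ and ($a_j=1$ or $b_i=1$). I would finish (3) by checking that this pairwise condition is equivalent to $\mathbf a,\mathbf b$ being $\mathbb{F}_2$-independent, classifying the indices by the type $(a_k,b_k)\in\{\pm1\}^2$ and using that independence forces a coordinate of type $(-1,1)$ or $(1,-1)$ together with the nonvanishing of each vector.

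Finally, (2) is the complementary case $\dim\langle\mathbf a,\mathbf b\rangle=1$, which splits into $\mathbf a=0\neq\mathbf b$ (case i), $\mathbf b=0\neq\mathbf a$ (case ii), and $\mathbf a=\mathbf b\neq0$ (case iii). The only delicate point is case iii: I would show that, given both vectors nonzero, the clause \emph{``every couple $(i,j)$ with $a_i=-1$, $b_j=-1$ also satisfies $b_i=-1$ and $a_j=-1$''} holds exactly when $\mathbf a=\mathbf b$. Indeed, a coordinate $k$ with $a_k\neq b_k$, paired with a coordinate witnessing the nonvanishing of the other vector, would violate the clause, while $\mathbf a=\mathbf b$ makes it automatic. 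The main obstacle throughout is this combinatorial bookkeeping—keeping the quantifiers ``for some $i\neq j$'' versus ``for all couples'' straight and verifying that cases (i)--(iii) are mutually exclusive and jointly exhaust dimension one—rather than any further number theory, since all the local symbol values are already furnished by Remark \ref{corr of thm de Ssi Zekhnini}.
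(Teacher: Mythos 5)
Your proposal is correct and follows essentially the same route as the paper: both reduce $e_d$ to the subgroup of $E_K/(E_K\cap N(L_d))$ generated by $\overline{\zeta_8}$ and $\overline{\varepsilon_2}$, read off the local symbols at each $p_i$ from Remark \ref{corr of thm de Ssi Zekhnini} via Lemma \ref{lemma norm residue symbol pp}, and then sort out the combinatorics (the paper delegating the $\overline{\zeta_8}\ne\overline{\varepsilon_2}$ cases to Lemma \ref{lm  les unites sont dans le mm classe}, which you also invoke). Your $\mathbb{F}_2$-rank formulation with the vectors $\mathbf a,\mathbf b$ is just a cleaner bookkeeping device for the same argument, and in fact supplies the case-checking details that the paper's proof leaves implicit.
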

  \begin{proof} We have:
  	\begin{enumerate}[\rm $\bullet$]
  		\item  The first assertion is deduced from  Lemma \ref{lemma norm residue symbol pp}, Remark \ref{corr of thm de Ssi Zekhnini}  and the fact that $e_d=0$ if and only if both $\zeta_{8}$ and $\varepsilon_2$ are  norms in $L_{d}/K$.
  		\item The second assertion  is deduced too from Lemmas   \ref{lemma norm residue symbol pp}, \ref{lm  les unites sont dans le mm classe}, Remark \ref{corr of thm de Ssi Zekhnini} and  the fact that $e_d=1$ if and only if \{($\zeta_8$ is a norm and  $\varepsilon_2$ is not) or  ($\zeta_8$ is not a norm and  $\varepsilon_2$ is) or (both $\zeta_8$ and  $\varepsilon_2$ are not norms and $\overline{  \varepsilon_2} =\overline{\zeta_{8}}$)\}.
  		\item The last assertion is a result of Lemmas \ref{lemma norm residue symbol pp}, \ref{lm  les unites sont dans le mm classe} and the  fact that $e_d=2$ if and only if $\zeta_8$ and  $\varepsilon_2$ are not norms and $\overline{  \varepsilon_2} \not=\overline{\zeta_{8}}$.
  	\end{enumerate}	
  \end{proof}
  
  Now, we can easily deduce the following theorem.
  \begin{theorem}\label{thm r_2(d) with pi=1 mod 8}
  	Let $d>2$ be an odd   square-free integer such that all its prime divisors are congruent to $1\pmod8$. If  $r$ is the number of these distinct primes divisors, then
  	$$r_2(d)=4r-1-e_d,$$
  	where $e_d$ is given by Lemmas $\ref{lm e_p with p=1 mod 8}$ and $\ref{lm e_d with pi=1 mod 8}$.	
  \end{theorem}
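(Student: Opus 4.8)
The plan is to read $r_2(d)$ straight off the master relation \eqref{egalite du 2-rang   =t-1-e}, namely $r_2(d)=t_d-1-e_d$, so the only quantity still to be pinned down is $t_d$, the number of finite and infinite primes of $K$ ramifying in $L_d/K$; the index $e_d$ has already been determined in Lemmas \ref{lm e_p with p=1 mod 8} and \ref{lm e_d with pi=1 mod 8}. Thus the whole argument reduces to counting $t_d$ under the hypothesis that every prime divisor of $d$ is congruent to $1\pmod 8$.

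First I would dispose of the infinite places. Since $K=\mathbb{Q}(\zeta_8)$ is totally imaginary, all of its archimedean places are complex, and a complex place cannot ramify in any extension; hence no infinite prime of $K$ contributes to $t_d$, and $t_d$ equals simply the number of finite primes of $K$ that ramify in $L_d/K$. This is the same convention already used implicitly in the proof of Theorem \ref{thm p 3 mod 8}, where $t_p=2$ was taken to be the finite count alone.

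Next I would count those finite primes. By Proposition \ref{ prop the dicriminant} the primes of $K$ ramifying in $L_d$ are exactly the divisors of $d$ in $\mathcal O_K$, and Theorem \ref{prop number of ramified primes} evaluates their number as $2(q+r)$, where $r$ is the number of rational primes dividing $d$ and $q$ the number of those congruent to $1\pmod 8$. Under the present hypothesis every prime divisor of $d$ is $\equiv 1\pmod 8$, so $q=r$ and therefore $t_d=2(q+r)=4r$. Substituting $t_d=4r$ into \eqref{egalite du 2-rang   =t-1-e} yields $r_2(d)=4r-1-e_d$, which is exactly the claim, with the explicit value of $e_d$ supplied by Lemmas \ref{lm e_p with p=1 mod 8} and \ref{lm e_d with pi=1 mod 8}.

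I expect no genuine obstacle here: the substantive work, namely the norm-residue-symbol computations that fix $e_d\in\{0,1,2\}$, was already carried out in the preceding lemmas, and the present statement is a bookkeeping consequence of the ambiguous class number formula. The only point that requires a word of justification is the vanishing of the infinite contribution to $t_d$, which follows at once from the total imaginarity of $K$; once this is noted, the equality $t_d=4r$ and hence the formula $r_2(d)=4r-1-e_d$ are immediate.
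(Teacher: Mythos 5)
Your argument is correct and is exactly the deduction the paper intends: the paper states this theorem as an immediate consequence of the relation $r_2(d)=t_d-1-e_d$, the count $t_d=2(q+r)=4r$ from Theorem \ref{prop number of ramified primes} (since $q=r$ here), and the determination of $e_d$ in Lemmas \ref{lm e_p with p=1 mod 8} and \ref{lm e_d with pi=1 mod 8}. Your extra remark that the totally imaginary field $K$ contributes no ramified infinite places is a worthwhile clarification the paper leaves implicit.
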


  We close this subsection with some numerical examples.
  \begin{exams}~\
  	\begin{enumerate}[\rm 1.]
  		\item For $d=73\cdot89\cdot97$, we have $73\equiv89\equiv 9\pmod{16}$, $97\equiv 1\pmod{16}$, $\left(\frac{73}{ 2}\right)_4=\left(\frac{89}{ 2}\right)_4=-\left(\frac{97}{ 2}\right)_4=-1$ and
  		$\left(\frac{2}{ 73}\right)_4=\left(\frac{2}{ 89}\right)_4=-\left(\frac{2}{ 97}\right)_4=1$. Thus,  by   Theorem \ref{thm r_2(d) with pi=1 mod 8}, the rank of the $2$-class group of $L_{d}:=\mathbb{Q}(\sqrt{2},i, \sqrt{73\cdot89\cdot97})$ equals\\  $4\cdot3-3=9$ (see the third item of Lemma \ref{lm e_d with pi=1 mod 8}), and the class group of $L_d$ by PARI/GP is of type $(1224, 8, 4, 4, 4, 2, 2, 2, 2)$.
  		\item 	For $d=73\cdot89\cdot113$, we have $73\equiv89\equiv 9\pmod{16}$, $113\equiv 1\pmod{16}$, $\left(\frac{73}{ 2}\right)_4=\left(\frac{89}{ 2}\right)_4=-\left(\frac{113}{ 2}\right)_4=-1$ and   $\left(\frac{2}{ 73}\right)_4=\left(\frac{2}{ 89}\right)_4=\left(\frac{2}{ 113}\right)_4=1$. So by   Theorem \ref{thm r_2(d) with pi=1 mod 8}, the rank of the $2$-class group of $L_{d}:=\mathbb{Q}(\sqrt{2},i, \sqrt{73\cdot89\cdot113})$ (resp. $\mathbb{Q}(\sqrt{2},i, \sqrt{73\cdot113})$ ) is $4\cdot3-2=10$ (resp. $4\cdot2-2=6$) (see the second item of Lemma \ref{lm e_d with pi=1 mod 8}), and the class group of $L_d$ by PARI/GP is of type $(384, 32, 2, 2, 2, 2, 2, 2, 2, 2)$ (resp. $(912, 2, 2, 2, 2, 2)$).
  		\item For $d=353\cdot257\cdot113$, we have $353\equiv 257  \equiv 113 \equiv 1\pmod{16}$ and $\left(\frac{353}{ 2}\right)_4=\left(\frac{257}{ 2}\right)_4=\left(\frac{113}{ 2}\right)_4=\left(\frac{2}{ 353}\right)_4=\left(\frac{2}{ 257}\right)_4=\left(\frac{2}{ 113}\right)_4=1$. So by   Theorem \ref{thm r_2(d) with pi=1 mod 8}, the rank of the $2$-class group of $L_{d}:=\mathbb{Q}(\sqrt{2},i, \sqrt{353\cdot257\cdot113})$ (resp. $\mathbb{Q}(\sqrt{2},i, \sqrt{257\cdot113})$) is $4\cdot3-1=11$ (resp. $4\cdot2-1=7$)  (see the first item of Lemma \ref{lm e_d with pi=1 mod 8}), and the class group of $L_d$ by PARI/GP is of type $(408, 204, 2, 2, 2, 2, 2, 2, 2, 2, 2)$ (resp. $(4368, 8, 2, 2, 2, 2, 2)$).
  	\end{enumerate}
  \end{exams}

  \subsection{Case 2: The  prime divisors of $d$ are not in the same coset of $\mathbb{Z}/8\mathbb{Z}$}~\\	
  In this subsection, we will make use of  Lemmas \ref{lemma norm residue symbol pp}, \ref{lm symbols 1}  and Remark \ref{corr of thm de Ssi Zekhnini}  to determine $r_2(d)$ for any odd composite square-free integer $d>2$ for which the prime divisors are not in the same coset of  $\mathbb{Z}/8\mathbb{Z}$. Since the number $t_d$ of prime ideals of $K$  ramified in $L_d$ is    determined by  Theorem \ref{prop number of ramified primes}, we shall give the rank of the $2$-class group of $L_d$ in terms of  $t_d$.

  \begin{theorem}\label{thm rank in general case}
  	Let $d>2$ be an odd composite   square-free integer such that the primes dividing $d$ are not in the same coset $\pmod 8$. 
  	\begin{enumerate}[\rm1.]
  		\item   If there exist two prime divisors $p_1 $ and   $p_2$ of $d$  such that $p_1\equiv -p_2\equiv 5\pmod 8$, then
  		$r_2(d)=t_d-3.$
  		
  		\item If $d$  is divisible by a prime congruent to $3\pmod 8$ and none of the other prime divisors of $d$ is congruent to $5\pmod 8$, then $r_2(d)=t_d-2$ or $t_d-3$. More precisely,
  		$r_2(d)= t_d-3$ if and only if there exists a prime $p\equiv 1\pmod 8$ dividing $d$ such that  $  \left(\frac{2}{ p}\right)_4=-1$.
  		
  		\item If $d$  is divisible by a prime congruent to $5\pmod 8$ and none of the other prime divisors of $d$ is congruent to $3\pmod 8$, then $r_2(d)=t_d-2$ or $t_d-3$. More precisely,
  		$r_2(d)= t_d-3$ if and only if there exists a prime $p\equiv 1\pmod 8$ dividing $d$ such that  $ \left(\frac{2}{ p}\right)_4\not=\left(\frac{p}{ 2}\right)_4$.
  		\item If all the primes dividing $d$ are congruent to $\pm1\pmod 8$, then
  		$r_2(d)=t_d-1-e_{ d_1}$, where $d_1$ is the product of all the primes $p|d$ such that $p\equiv 1\pmod 8$.
  		Note that 	$e_{{d_1}}$ is given by   Lemmas $\ref{lm e_p with p=1 mod 8}$ and $\ref{lm e_d with pi=1 mod 8}$.
  	\end{enumerate}
  \end{theorem}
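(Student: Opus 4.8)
The plan is to read off $r_2(d)$ from the rank formula $r_2(d)=t_d-1-e_d$ (relation~\eqref{egalite du 2-rang   =t-1-e}), in which $t_d$ is already supplied by Theorem~\ref{prop number of ramified primes}; the whole problem is therefore to compute $e_d$. Since $E_K=\langle\zeta_8,\varepsilon_2\rangle$ by Lemma~\ref{lm unite zeta8}, and since $N_{L_d/K}(\alpha)=\alpha^2$ for $\alpha\in K$ forces $E_K^2\subseteq E_K\cap N(L_d)$, the group $E_K/(E_K\cap N(L_d))$ is a quotient of $E_K/E_K^2\cong\mathbb{F}_2^2$, generated by the images of $\zeta_8$ and $\varepsilon_2$. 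Hence $e_d=\dim_{\mathbb{F}_2}(E_K/(E_K\cap N(L_d)))\in\{0,1,2\}$, and $e_d$ equals $0$, $1$ or $2$ according as all three, exactly one, or none of the nontrivial classes $\overline{\zeta_8},\overline{\varepsilon_2},\overline{\zeta_8\varepsilon_2}$ is a norm in $L_d/K$. By the norm criterion of \cite{azizi20042} together with Lemma~\ref{lemma norm residue symbol pp}, a unit $u\in E_K$ is a norm if and only if $\left(\frac{u,\,p}{\mathfrak p_K}\right)=1$ for every prime $p\mid d$; moreover Lemma~\ref{lm symbols 1} and Remark~\ref{corr of thm de Ssi Zekhnini} show this value is independent of the prime $\mathfrak p_K$ of $K$ above $p$, so each $p\mid d$ contributes a single well-defined sign.

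Next I would assemble the table of these signs. Lemma~\ref{lm symbols 1} gives, for the $\zeta_8$- and $\varepsilon_2$-symbols at $p$: both equal $-1$ when $p\equiv3\pmod8$; the $\zeta_8$-symbol is $-1$ and the $\varepsilon_2$-symbol is $1$ when $p\equiv5\pmod8$; both equal $1$ when $p\equiv7\pmod8$. Remark~\ref{corr of thm de Ssi Zekhnini} gives, for $p\equiv1\pmod8$, the $\zeta_8$-symbol $(-1)^{(p-1)/8}$ and the $\varepsilon_2$-symbol $\left(\frac{2}{p}\right)_4\left(\frac{p}{2}\right)_4$. The product symbol of $\zeta_8\varepsilon_2$ is then $+1$ for $p\equiv3,7\pmod8$, is $-1$ for $p\equiv5\pmod8$, and, using the standard rational biquadratic identity $(-1)^{(p-1)/8}=\left(\frac{p}{2}\right)_4$ valid for $p\equiv1\pmod8$, simplifies to $\left(\frac{2}{p}\right)_4$ when $p\equiv1\pmod8$. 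This last simplification is exactly what turns the raw computation into the clean condition on $\left(\frac{2}{p}\right)_4$ stated in part~(2).

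The four cases then follow by inspection of which primes can force each of the three classes to be a non-norm. For (1), a divisor $p_1\equiv5$ makes both $\zeta_8$ and $\zeta_8\varepsilon_2$ non-norms while a divisor $p_2\equiv3$ makes $\varepsilon_2$ a non-norm, so all three are non-norms and $e_d=2$, whence $r_2(d)=t_d-3$. For (2) a divisor $p_0\equiv3$ already makes $\zeta_8$ and $\varepsilon_2$ non-norms, so $e_d\in\{1,2\}$; since no divisor is $\equiv5$ and the $\zeta_8\varepsilon_2$-symbol is $+1$ at the classes $3$ and $7$, we get $e_d=2$ (equivalently $\zeta_8\varepsilon_2$ a non-norm) if and only if some $p\equiv1\pmod8$ divides $d$ with $\left(\frac{2}{p}\right)_4=-1$, and $e_d=1$ otherwise. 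Case (3) is symmetric with the roles of $\zeta_8$ and $\varepsilon_2$ interchanged: $p_0\equiv5$ makes $\zeta_8$ and $\zeta_8\varepsilon_2$ non-norms, and $e_d=2$ iff $\varepsilon_2$ is a non-norm, which (no divisor being $\equiv3$) occurs iff some $p\equiv1\pmod8$ has $\left(\frac{2}{p}\right)_4\left(\frac{p}{2}\right)_4=-1$, i.e. $\left(\frac{2}{p}\right)_4\neq\left(\frac{p}{2}\right)_4$. Finally, in (4) every prime $p\equiv7\pmod8$ gives both symbols equal to $1$ and so influences none of the three norm conditions; the computation of $E_K\cap N(L_d)$ depends only on the primes $p\equiv1\pmod8$, giving $e_d=e_{d_1}$ with $d_1=\prod_{p\mid d,\ p\equiv1\,(8)}p$, and $r_2(d)=t_d-1-e_{d_1}$ with $e_{d_1}$ read from Lemmas~\ref{lm e_p with p=1 mod 8} and~\ref{lm e_d with pi=1 mod 8}.

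The computation is essentially formal once the sign table is in place, so the main obstacle is not a single hard step but the bookkeeping: verifying that the per-prime norm residue symbols are genuinely independent of $\mathfrak p_K$ (so that ``for all ramified primes of $K$'' collapses to ``for all rational $p\mid d$''), and correctly phrasing the existential conditions in (2) and (3) so that they match the stated ``if and only if'' characterizations. The one external input that must be invoked explicitly is the rational biquadratic reciprocity identity $(-1)^{(p-1)/8}=\left(\frac{p}{2}\right)_4$ for $p\equiv1\pmod8$, which is the crux of part~(2); everything else reduces to combining Lemmas~\ref{lemma norm residue symbol pp} and~\ref{lm symbols 1} with Remark~\ref{corr of thm de Ssi Zekhnini}.
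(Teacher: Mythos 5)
Your proposal is correct and follows essentially the same route as the paper: both reduce the theorem to computing $e_d$ via the per-prime values of $\left(\frac{\zeta_8,\,p}{\mathfrak p_K}\right)$ and $\left(\frac{\varepsilon_2,\,p}{\mathfrak p_K}\right)$ from Lemma \ref{lm symbols 1} and Remark \ref{corr of thm de Ssi Zekhnini}, discard the primes $\equiv 7\pmod 8$ as contributing trivial symbols, and use the identity $(-1)^{(p-1)/8}=\left(\frac{p}{2}\right)_4$ to collapse the $\zeta_8\varepsilon_2$-symbol at $p\equiv 1\pmod 8$ to $\left(\frac{2}{p}\right)_4$. The only cosmetic difference is that you phrase $e_d$ as the $\mathbb{F}_2$-dimension counting which of the three nontrivial unit classes are norms, whereas the paper argues via whether $\overline{\zeta_8}=\overline{\varepsilon_2}$; these are equivalent bookkeeping devices.
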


  \begin{proof}
  	By   formula \eqref{egalite du 2-rang   =t-1-e}, the rank of the $2$-class group of $L_d$ is
  	$r_2(d)=t_{d}-1-{e_{d}}.$ As $K$ is a biquadratic number field,  then $e_d\in \{0,1,2\}$. On the other hand,  the symbols $\left(\frac{\zeta_8,\,
  		d}{\mathfrak{p}}\right) $ and $\left(\frac{\varepsilon_2,\,
  		d}{\mathfrak{p}}\right)$ are trivial for any prime ideal $\mathfrak{p}$ of $K$ lying over a prime $p\equiv 7\pmod 8$ (see  Lemmas \ref{lemma norm residue symbol pp}, \ref{lm symbols 1}) so we will  ignore them.
  	\begin{enumerate}[\rm1.]
  		\item Let $\mathfrak p_1$ be a prime of $K$ above $p_1$. By Lemmas \ref{lemma norm residue symbol pp} and  \ref{lm symbols 1}, the units $\zeta_{8}$ and $\varepsilon_2$ are not norms in $L_d/K$ and
  		$\left(\frac{\zeta_8\varepsilon_2,\,
  			d}{\mathfrak{p}_{1}}\right)=\left(\frac{\varepsilon_2,\,
  			{p}_{1}}{\mathfrak{p}_{1}}\right)\left(\frac{\zeta_8,\,
  			{p}_{1}}{\mathfrak{p}_{1}}\right)=-1$, so
  		$\overline{\zeta_{8}}\not=\overline{\varepsilon_2}$ in $E_{K}/(E_{K}\cap N(L_d))$. Thus $e_d=2$. Hence the first item.
  		
  		\item Let  $p_1$ be a prime  dividing  $d$ such that  $p_1\equiv 3\pmod 8$. Assume that $d$ is divisible by  prime   $p_2\equiv 1\pmod 8$. Denote by $\mathfrak p_1$ and $\mathfrak p_2$ two prime ideals of $K$ lying over $p_1$ and $p_2$ respectively.
  		By Lemmas \ref{lemma norm residue symbol pp}, \ref{lm symbols 1} and  Remark \ref{corr of thm de Ssi Zekhnini},  we have
  		$\left(\frac{\zeta_8,\, d}{\mathfrak{p}_{1}}\right)=\left(\frac{\zeta_8,\, p_1}{\mathfrak{p}_{1}}\right)=-1$, $\left(\frac{\varepsilon_2,\, d}{\mathfrak{p}_{1}}\right)=\left(\frac{\varepsilon_2,\, p_1}{\mathfrak{p}_{1}}\right)=-1,$
  		$\left(\frac{\zeta_8,\, d}{\mathfrak{p}_{2}}\right)=\left(\frac{\zeta_8,\, p_2}{\mathfrak{p}_{2}}\right)=(-1)^{\frac{p_{2}-1}{8}}$ and similarly $\left(\frac{\varepsilon_2,\, d}{\mathfrak{p}_{2}}\right)=\left(\frac{2}{ p_{2}}\right)_4\left(\frac{p_{2}}{ 2}\right)_4.$
  		Hence $\zeta_8$ and $\varepsilon_2$ are not   norms in $L_d/K$, and so $e_{d}\not=0$, which implies that $e_{d}\in \{1,2\}$. We have   $e_{d}=2$ if and only if  $\overline{\zeta_{8}}\not=\overline{\varepsilon_2}$ in $E_{K}/(E_{K}\cap N(L_d))$ and this, by Lemma \ref{lm e_d with pi=1 mod 8}, can only happen for primes $\equiv 1\pmod 8$. Since 	$\left(\frac{\zeta_8\varepsilon_2,\, d}{\mathfrak{p}_{2}}\right)=(-1)^{\frac{p_{2}-1}{8}}\left(\frac{2}{ p_{2}}\right)_4\left(\frac{p_{2}}{ 2}\right)_4=\left(\frac{2}{ p_{2}}\right)_4$, then $e_{d}=2$ if and only if $\left(\frac{2}{ p}\right)_4=-1$ for some prime $p|d$ such that  $p\equiv 1\pmod 8$.
  	 The second item follows.
  		
  		We similarly prove the third item. The fourth item is immediate.	
  \end{enumerate}\end{proof}

  We close this subsection with the following numerical examples.
  \begin{exam}~\
  	\begin{enumerate}[\rm1.]
  		\item For  $d=7\cdot3\cdot113$ (resp. $d=7\cdot3\cdot17$), we have   $\left(\frac{113}{ 2}\right)_4=\left(\frac{2}{113}\right)_4=-\left(\frac{2}{17}\right)_4=1$. So by the second item of the previous theorem the rank of the $2$-class group of $L_d=\mathbb{Q}(i, \sqrt{2}, \sqrt{7\cdot3\cdot113} )$ (resp. $L_d=\mathbb{Q}(i, \sqrt{2}, \sqrt{7\cdot3\cdot17} )$) is $8-2=6$ (resp.  $8-3=5$),  and the class group of $L_d$ by PARI/GP is of type $(42, 2, 2, 2, 2, 2)$ (resp. $(12, 2, 2, 2, 2)$).
  		\item For $d=7\cdot5\cdot17$ (resp. $d=7\cdot5\cdot113$), we have   $\left(\frac{17}{2}\right)_4=-\left(\frac{2}{17}\right)_4=\left(\frac{113}{2}\right)_4=\left(\frac{2}{113}\right)_4=1$. So, by  the third item of the previous theorem the rank of the $2$-class group of $L_d=\mathbb{Q}(i, \sqrt{2}, \sqrt{7\cdot5\cdot17} )$ is $8-3=5$ (resp. $L_d=\mathbb{Q}(i, \sqrt{2}, \sqrt{7\cdot5\cdot113} )$ is $8-2=6$),  and the class group of $L_d$ by PARI/GP is of type $(20, 2, 2, 2, 2)$ (resp. $(42, 6, 2, 2, 2, 2)$).
  		\item For $d=7\cdot17$ (resp. $d=7\cdot113$), we have   $\left(\frac{17}{2}\right)_4=-\left(\frac{2}{17}\right)_4=\left(\frac{113}{2}\right)_4=\left(\frac{2}{113}\right)_4=1$.  Then,  by the last item of the previous theorem the rank of the $2$-class group of $L_d=\mathbb{Q}(i, \sqrt{2}, \sqrt{7\cdot17} )$ is $6-1-1=4$ (resp. $L_d=\mathbb{Q}(i, \sqrt{2}, \sqrt{7\cdot113} )$ is $6-1-0=5$),  and the class group of $L_d$ by PARI/GP is of type $(20, 2, 2, 2)$ (resp. $(64, 2, 2, 2, 2)$).
  	\end{enumerate}
  \end{exam}

  \section{Applications}\label{section sur les applications}
  In this section, we  will determine the integers  $d$ such that   the  $2$-class group of $L_d$  is trivial, cyclic or isomorphic to  $\mathbb{Z}/2\mathbb{Z}\times\mathbb{Z}/2\mathbb{Z}$. For this, we have to recall  the following results.
  \begin{lm}[\cite{lemmermeyer1995ideal}]\label{lm Kurodas formula}
  	Let $k'/k$ be a  biquadratic extension of $\mathrm{CM}$-type, then
  	$$h(k')=\frac{Q_{k'}}{Q_{k_1}Q_{k_2}}\cdot\frac{\omega_{k'}}{\omega_{k_1}\omega_{k_2} }\cdot\frac{h(k_1)h(k_2)h(k'^+)}{h(k)^2}\cdot$$
  	Where $k_1, k_2$ and $k'^+$ are the three sub-extensions of $k'/k$.
  \end{lm}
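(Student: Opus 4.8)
The statement is Kuroda's class number formula written out for a biquadratic extension $k'/k$ whose top field is of CM-type, so the plan is to recover it from the analytic class number formula by exploiting the Artin factorization of Dedekind zeta functions attached to $\mathrm{Gal}(k'/k)\cong(\ZZ/2\ZZ)^2$. Before starting I would record the shape of the tower: since the maximal real subfield $k'^+$ is listed among the three quadratic subextensions of $k'/k$, the base $k$ must be totally real, $k'^+$ is totally real of degree $2[k:\QQ]$, and $k_1,k_2$ are the two CM intermediate fields, each satisfying $k_1^+=k_2^+=k$. This last identification is what will let me convert regulator ratios into the Hasse indices $Q$.

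First I would write down the factorization $\zeta_{k'}(s)\,\zeta_k(s)^2=\zeta_{k_1}(s)\,\zeta_{k_2}(s)\,\zeta_{k'^+}(s)$, take residues at $s=1$, and substitute the analytic class number formula, in which the residue of $\zeta_F$ is $2^{r_1}(2\pi)^{r_2}h(F)R_F/(\omega_F\sqrt{|\delta_F|})$. Setting $n=[k:\QQ]$, the archimedean factors $2^{r_1}(2\pi)^{r_2}$ contribute $2^{2n}(2\pi)^{2n}$ to each side (the real fields $k,k'^+$ supply the powers of $2$ and the complex fields $k_1,k_2,k'$ the powers of $2\pi$), so they cancel; the discriminant factors cancel by the conductor--discriminant relation $|\delta_{k'}|\,|\delta_k|^2=|\delta_{k_1}|\,|\delta_{k_2}|\,|\delta_{k'^+}|$ coming from the same factorization; and the root-of-unity denominators reduce to $\omega_{k'}\omega_k^2=4\omega_{k'}$ against $\omega_{k_1}\omega_{k_2}\omega_{k'^+}=2\omega_{k_1}\omega_{k_2}$. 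Collecting what survives, I would arrive at
\[
h(k')=\frac{h(k_1)h(k_2)h(k'^+)}{h(k)^2}\cdot\frac{R_{k_1}R_{k_2}R_{k'^+}}{R_{k'}R_k^{\,2}}\cdot\frac{2\,\omega_{k'}}{\omega_{k_1}\omega_{k_2}}.
\]

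The decisive step, and the one I expect to be the main obstacle, is the regulator quotient, since it is where the indices $Q$ enter and where every power of $2$ must be tracked exactly. The key input is the standard regulator relation for a CM field $F$ with maximal real subfield $F^+$: because the two unit groups have the same rank and the complex places of $F$ each contribute a factor $2$, evaluating the regulator on a basis of $E_{F^+}$ and accounting for the index $Q_F=(E_F:W_FE_{F^+})$ gives $R_F=2^{[F^+:\QQ]-1}Q_F^{-1}R_{F^+}$. Applying this to $k_1,k_2$ (with $F^+=k$, exponent $n-1$) and to $k'$ (with $F^+=k'^+$, exponent $2n-1$), the real regulators $R_k$ and $R_{k'^+}$ cancel and the powers of $2$ collapse to $2^{2(n-1)}/2^{2n-1}=\tfrac12$, so that $R_{k_1}R_{k_2}R_{k'^+}/(R_{k'}R_k^{\,2})=Q_{k'}/(2Q_{k_1}Q_{k_2})$. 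Feeding this back into the display absorbs the stray factor $2$ and produces precisely the claimed identity. The only genuinely delicate point is the normalization of this CM regulator relation; a reader wishing to avoid the analytic route entirely could instead derive the formula algebraically from the ambiguous class number formula together with the Herbrand quotient of the unit group, which repackages the same unit-index bookkeeping in cohomological terms.
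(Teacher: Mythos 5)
The paper offers no proof of this lemma: it is imported verbatim from Lemmermeyer's \emph{Ideal class groups of cyclotomic number fields I}, so there is no internal argument to compare yours against. Your analytic derivation is correct and complete in outline. The setup (base $k$ totally real, $k_1,k_2$ CM with $k_i^+=k$), the Artin factorization $\zeta_{k'}\zeta_k^2=\zeta_{k_1}\zeta_{k_2}\zeta_{k'^+}$, the cancellation of the archimedean factors $2^{2n}(2\pi)^{2n}$ and of the discriminants via the conductor--discriminant relation, the reduction $\omega_k=\omega_{k'^+}=2$, and above all the regulator bookkeeping $R_F=2^{[F^+:\QQ]-1}Q_F^{-1}R_{F^+}$ leading to $R_{k_1}R_{k_2}R_{k'^+}/(R_{k'}R_k^2)=Q_{k'}/(2Q_{k_1}Q_{k_2})$ all check out, and the stray factor $2$ is absorbed exactly as you say. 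For comparison, the route actually taken in the cited source is algebraic rather than analytic: one starts from the general Kuroda formula $h(k')=2^{d}\,q(k')\,h(k_1)h(k_2)h(k'^+)/h(k)^2$ with $q(k')=(E_{k'}:E_{k_1}E_{k_2}E_{k'^+})$, established via the ambiguous class number formula and unit cohomology, and then rewrites the index $q(k')$ in the CM situation in terms of the Hasse indices $Q$ and the root-of-unity orders. That version is more general (no CM hypothesis needed for the $q(k')$ form) and stays within class field theory, which is more in keeping with the rest of the paper; your zeta-function argument is shorter and self-contained but requires the careful normalization of the CM regulator relation, which you correctly flag as the one delicate point.
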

  
  \noindent The result below, gives the class number of a multiquadratic number field in terms of those of its quadratic subfields. 	
  \begin{prop}[\cite{wada}]\label{wada's f.}
  	Let $k$ be a multiquadratic number field of degree $2^n$, $n\in\mathds{N}$,  and $k_i$ the $s=2^n-1$ quadratic subfields of $k$. Then
  	$$h(k)=\frac{1}{2^v} (E_k: \prod_{i=1}^{s}E_{k_i}) \prod_{i=1}^{s}h(k_i),$$
  	with $$v=\left\{ \begin{array}{cl}
  	n(2^{n-1}-1); &\text{ if } k \text{ is real, }\\
  	(n-1)(2^{n-2}-1)+2^{n-1}-1 & \text{ if } k \text{ is imaginary.}
  	\end{array}\right.$$
  \end{prop}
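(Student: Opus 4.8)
The plan is to establish the formula analytically, by factoring the Dedekind zeta function of $k$ through the characters of $\mathrm{Gal}(k/\QQ)$ and comparing residues at $s=1$ via the analytic class number formula. Since $k/\QQ$ is abelian with $\mathrm{Gal}(k/\QQ)\simeq(\ZZ/2\ZZ)^{n}$, its character group has exactly $2^{n}$ elements and $\zeta_{k}(s)=\prod_{\chi}L(s,\chi)$. The trivial character contributes the Riemann zeta function $\zeta(s)$, while each of the $s=2^{n}-1$ nontrivial characters is a quadratic character $\chi_{i}$ cut out by one of the quadratic subfields $k_{i}$, and satisfies $L(s,\chi_{i})=\zeta_{k_{i}}(s)/\zeta(s)$.

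First I would compare residues at $s=1$. Writing $\rho_{F}:=\mathrm{Res}_{s=1}\,\zeta_{F}(s)$, the fact that $\zeta_{k}$ has a simple pole at $s=1$ while each $L(s,\chi_{i})$ is holomorphic and nonzero there, together with $\mathrm{Res}_{s=1}\,\zeta(s)=1$, gives
\[
\rho_{k}=\prod_{i=1}^{s}L(1,\chi_{i})=\prod_{i=1}^{s}\rho_{k_{i}}.
\]
Substituting the analytic class number formula $\rho_{F}=2^{r_{1}}(2\pi)^{r_{2}}h(F)R_{F}/(\omega_{F}\sqrt{|\delta_{F}|})$ for $F=k$ and for each $F=k_{i}$, then solving for $h(k)$, reduces the statement to an identity among discriminants, $(2\pi)$-factors, roots of unity, and regulators.

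Next I would dispose of every factor except the class numbers and the unit index. The discriminants cancel by the conductor--discriminant formula: the conductor of $\chi_{i}$ is $|\delta_{k_{i}}|$ and that of the trivial character is $1$, whence $|\delta_{k}|=\prod_{i}|\delta_{k_{i}}|$ and the factors $\sqrt{|\delta_{F}|}$ disappear. The transcendental $(2\pi)$-factors match as well: in the CM case $k$ has $r_{2}=2^{n-1}$ complex places, exactly the number of imaginary quadratic subfields (those $\chi_{i}$ that are nontrivial on complex conjugation), while in the totally real case no $(2\pi)$ occurs on either side. The root-of-unity ratio $\omega_{k}/\prod_{i}\omega_{k_{i}}$ is a power of $2$: the only cyclotomic field of odd conductor that can embed in a multiquadratic field is $\QQ(\zeta_{3})=\QQ(\sqrt{-3})$, so any odd contribution $3$ to $\omega_{k}$ is matched by the unique subfield $\QQ(\sqrt{-3})$ and the odd parts cancel, leaving only a controlled $2$-power. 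Collecting the explicit powers of $2$ from the factors $2^{r_{1}}$ and from this root-of-unity ratio, separately in the real case ($r_{1}=2^{n}$) and the imaginary case ($r_{1}=0$), is what produces the two values of $v$.

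The main obstacle, and the step demanding the most care, is the regulator comparison of $R_{k}$ with $\prod_{i}R_{k_{i}}$. The unit ranks already agree: $k$ has unit rank $2^{n-1}-1$ in the CM case and $2^{n}-1$ in the real case, equal respectively to the number of real quadratic subfields and to $s$, so $\prod_{i}E_{k_{i}}$ is a full-rank subgroup of $E_{k}$ modulo torsion, of index $q(k)=(E_{k}:\prod_{i}E_{k_{i}})$. The delicate point is that $\prod_{i}R_{k_{i}}$ is computed from the logarithmic embedding of each quadratic field in isolation, whereas $R_{k}$ uses the embedding of $k$; passing between them introduces an explicit power of $2$ governed by the splitting of the archimedean places of each $k_{i}$ in $k$. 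Making this precise requires the projection formulas relating the Minkowski logarithmic map of $k$ to those of the $k_{i}$ under the $\mathrm{Gal}(k/\QQ)$-action. Once the exact $2$-power in $R_{k}/\prod_{i}R_{k_{i}}$ is pinned down and combined with the contributions isolated above, the accumulated $2$-power is precisely $2^{-v}$ and the residual index is precisely $q(k)$, yielding $h(k)=2^{-v}q(k)\prod_{i}h(k_{i})$. For small $n$ this can be cross-checked against Lemma \ref{lm Kurodas formula}; in particular $n=2$ with $v=1$ recovers the imaginary biquadratic case.
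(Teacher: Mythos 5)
The paper does not prove this proposition at all: it is imported verbatim from Wada's 1966 paper (the citation \cite{wada}), so there is no internal argument to compare yours against. Your analytic strategy --- factoring $\zeta_k(s)=\prod_\chi L(s,\chi)$ over the characters of $(\ZZ/2\ZZ)^n$, identifying $L(s,\chi_i)=\zeta_{k_i}(s)/\zeta(s)$, comparing residues at $s=1$, and cancelling discriminants via the conductor--discriminant formula --- is indeed the classical route by which Kuroda-type formulas, including Wada's, are established, and the bookkeeping you do carry out (the count $r_2(k)=2^{n-1}$ of complex places against the $2^{n-1}$ imaginary quadratic subfields, the full-rank statement for $\prod_i E_{k_i}$ in both the real and CM cases) is correct.

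The genuine gap is that the one computation which actually produces the constant $2^{-v}$ is never performed. You correctly isolate it --- the comparison of $R_k$ with $\prod_i R_{k_i}$ via the different normalizations of the logarithmic embeddings, together with the residual $2$-powers from $2^{r_1}$ and from $\omega_k/\prod_i\omega_{k_i}$ --- but then write that ``once the exact $2$-power is pinned down\dots the accumulated $2$-power is precisely $2^{-v}$,'' which asserts the conclusion rather than deriving it. The entire content of the proposition beyond the qualitative statement $h(k)\doteq q(k)\prod_i h(k_i)$ up to a $2$-power is the explicit value of $v$, and that value comes exactly from the steps you defer: (i) for a unit $\varepsilon\in k_i$ and a place $w$ of $k$ over a place $v$ of $k_i$ one has $\log\|\varepsilon\|_w=[k_w:(k_i)_v]\log|\varepsilon|_v$, and summing the resulting determinant identities over a basis of $\prod_i E_{k_i}$ modulo torsion gives the regulator of that subgroup as an explicit $2$-power times $\prod_i R_{k_i}$, with the exponent depending on how archimedean places split; (ii) the index $(E_k:\mu_k\prod_i E_{k_i})$ versus $(E_k:\prod_i E_{k_i})$ must be reconciled, since the regulator only sees units modulo torsion, and this is where the ``controlled $2$-power'' from $\omega_k/\prod_i\omega_{k_i}$ has to be computed rather than waved at. You should also verify independence of the fundamental units $\varepsilon_{k_i}$ (needed for ``full rank''), which is standard but not free. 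As a sanity check your framework does reproduce $v=1$ for $n=2$ imaginary (matching Lemma \ref{lm Kurodas formula}) and $v=5$ for $n=3$ imaginary (the value used in Proposition \ref{prop d=pq  cl=(2,2)}), so the plan is viable; it is just not yet a proof.
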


  \begin{lm}[\cite{mccall1995imaginary}]\label{lm de parry}
  	Let $S$ be  the set of odd primes that are ramified in $\mathbb{Q}(\sqrt d)$ and  $S_0$ its subset 
  	consisting of those primes that are congruent to $1\pmod 4$. Let $s$ and $s_0$ be the cardinality  of $S$ and $S_0$ respectively. Then
  	the rank of the $2$-class group of $k=\mathbb Q(\sqrt d ,i)$ equals		
  	\begin{itemize}
  		\item $s+s_0$ if $d$ is even and $p\equiv 1\pmod 8$ for all $ p\in S_0$ $($the case  $S_0=\emptyset$ is included here$)$.
  		\item $s+s_0-1$ if $d$ is even and there exists $p\in S_0$ satisfying $p\equiv 5 \pmod 8$,\\
  		or $d$ is odd and $p\equiv 1\pmod 8$, for all $  p\in S_0$ $($the case  $S_0=\emptyset$ is included here$)$.
  		\item $s+s_0-2$ if $d$ is odd and there exists $p\in S_0$ satisfying $p\equiv 5\pmod 8$.
  	\end{itemize}
  %The cases $S_0=\empty$is considered in the case for all $p\equiv 1\pmod 8$, and we take $s_0=0$.
  \end{lm}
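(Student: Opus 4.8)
The plan is to apply the ambiguous class number formula to the quadratic extension $k/\mathbb{Q}(i)$, exactly mirroring the treatment of $L_d/K$ in \S\ref{1} but with the base field $\mathbb{Q}(i)$, which like $K=\mathbb{Q}(\zeta_8)$ is a principal ideal domain, in place of $K$. Since $h(\mathbb{Q}(i))=1$, the very reasoning that produces \eqref{egalite du 2-rang   =t-1-e} shows that $Am_2(k/\mathbb{Q}(i))$ is the $2$-torsion of $\mathrm{Cl}(k)$, of order $2^{\mathrm{rank}\,\mathrm{Cl}_2(k)}$, and hence
$$\mathrm{rank}\,\mathrm{Cl}_2(k)=t-1-e,$$
where $t$ is the number of finite and infinite primes of $\mathbb{Q}(i)$ ramifying in $k$ and $e$ is defined by $(E_{\mathbb{Q}(i)}:E_{\mathbb{Q}(i)}\cap N_{k/\mathbb{Q}(i)}(k^*))=2^{e}$. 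As $\mathbb{Q}(i)$ is totally imaginary, its unique archimedean place is complex and cannot ramify, so $t$ counts only finite primes.

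Next I would compute $t$ from the splitting of the ramified rational primes in $\mathbb{Q}(i)$. An odd prime $p\equiv 1\pmod 4$, i.e.\ $p\in S_0$, splits into two primes of $\mathbb{Q}(i)$, each ramifying in $k$, while an odd prime $p\equiv 3\pmod 4$, i.e.\ $p\in S\setminus S_0$, is inert and yields a single ramified prime; this gives $2s_0+(s-s_0)=s+s_0$. The prime $2=-i(1+i)^2$ contributes a correction $c\in\{0,1\}$ according as $\pi:=1+i$ does or does not ramify in $k/\mathbb{Q}(i)$, so $t=s+s_0+c$.

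For the unit index I would use $E_{\mathbb{Q}(i)}=\langle i\rangle$, whose nontrivial class modulo squares is $\bar i$. Since $N_{k/\mathbb{Q}(i)}(i)=i^2=-1$, the element $-1$ is always a norm, so $E_{\mathbb{Q}(i)}\cap N_{k/\mathbb{Q}(i)}(k^*)\supseteq\langle-1\rangle=E_{\mathbb{Q}(i)}^2$, whence $e\in\{0,1\}$ with $e=0$ exactly when $i$ is a norm. As the unit ideal is trivially the norm of a fractional ideal, the norm criterion of \cite{azizi20042} shows $i$ is a norm in $k/\mathbb{Q}(i)$ if and only if $\left(\frac{i,\,d}{\mathfrak p}\right)=1$ for every ramified $\mathfrak p$. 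For $\mathfrak p$ over an odd $p\mid d$, property~3 of the norm residue symbol applies because $d$ is square-free and $\mathfrak p$ is unramified in $\mathbb{Q}(i,\sqrt i)=\mathbb{Q}(\zeta_8)$, giving $\left(\frac{i,\,d}{\mathfrak p}\right)=\left(\frac{i}{\mathfrak p}\right)$, which is $1$ precisely when $i$ is a square in the residue field. Using $|\mathbb{F}_{p^2}^{*}|=p^2-1\equiv 0\pmod 8$ for inert $p\equiv 3\pmod 4$ and $|\mathbb{F}_p^{*}|=p-1$ for split $p\equiv 1\pmod 4$, this symbol is always $1$ in the inert case, is $1$ for $p\equiv 1\pmod 8$, and is $-1$ for $p\equiv 5\pmod 8$. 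The symbol at $\pi$ is then forced (for $d$ odd it is $1$ by property~4 since $\pi$ is unramified; in general it is pinned by the product formula, property~5), so that $e=0$ if and only if every $p\in S_0$ is $\equiv 1\pmod 8$.

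Finally I would assemble the three cases: combining $t=s+s_0+c$ with $\mathrm{rank}\,\mathrm{Cl}_2(k)=s+s_0+c-1-e$ and the values of $e$ just found reduces everything to determining $c$. One shows $\pi$ is unramified in $k/\mathbb{Q}(i)$ when $d$ is odd ($c=0$) and ramified when $d$ is even ($c=1$), yielding $r=s+s_0-1-e$ and $r=s+s_0-e$ respectively, matching the stated values. The main obstacle is exactly this local analysis at $2$: deciding whether $\pi$ ramifies in $k/\mathbb{Q}(i)$ is a $2$-adic computation over $\mathbb{Q}_2(i)=\mathbb{Q}_2(\sqrt{-1})$, where, writing $\sqrt d=i^{-1}\sqrt{-d}$ and tracking the class of $d$ (or of $2d'$ for $d=2d'$) modulo $8$, one sees which of the local subfields is unramified. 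This is the step that produces the even/odd dichotomy and the passage from conditions modulo $4$ to conditions modulo $8$ on the primes of $S_0$.
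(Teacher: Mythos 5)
The paper gives no proof of this lemma at all --- it is imported verbatim from McCall--Parry--Ranalli --- so there is no internal argument to compare yours against; I can only judge the proposal on its merits, and it is correct. Moreover, your route is exactly the technique the authors themselves use for $L_d/K$ in the preliminaries: the ambiguous class number formula over a base field of odd class number (so that $\sigma$ acts as inversion on $\mathrm{Cl}_2(k)$ and the ambiguous $2$-classes are precisely the $2$-torsion, giving $\mathrm{rank}\,\mathrm{Cl}_2(k)=t-1-e$), followed by a count of ramified primes and a norm-residue-symbol evaluation of the unit index. The individual steps all check: the archimedean place of $\mathbb{Q}(i)$ is complex; each $p\in S_0$ contributes two ramified primes and each $p\in S\setminus S_0$ one, giving $s+s_0$ plus the contribution of $1+i$; the identity $-1=N_{k/\mathbb{Q}(i)}(i)$ forces $e\in\{0,1\}$; and $\left(\frac{i,\,d}{\mathfrak p}\right)=\left(\frac{i}{\mathfrak p}\right)$ equals $1$ exactly when $8\mid N\mathfrak p-1$, which is automatic for inert $p\equiv 3\pmod 4$ and holds for split $p$ precisely when $p\equiv 1\pmod 8$. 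Two points you leave as sketches deserve a sentence each in a full write-up. First, the value of $c$ need not be a genuine $2$-adic computation: if $d$ is odd then one of $\pm d\equiv 1\pmod 4$ and $\mathbb{Q}(\sqrt{\pm d})$ is unramified at $2$, so the inertia group of $2$ in $k/\mathbb{Q}$ has order $2$ and $1+i$ does not ramify in $k/\mathbb{Q}(i)$; if $d$ is even all three quadratic subfields of $k$ ramify at $2$, so $2$ is totally ramified and $1+i$ ramifies. Second, when $d$ is even you should state explicitly that $i$ is a norm if and only if \emph{every} local symbol equals $+1$ (Hasse norm theorem), the product formula serving only to show the symbol at $1+i$ is then automatically $+1$; a single $p\equiv 5\pmod 8$ already yields a prime $\mathfrak p$ with symbol $-1$, whence $e=1$. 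With those details filled in, the three displayed cases follow from $\mathrm{rank}\,\mathrm{Cl}_2(k)=s+s_0+c-1-e$ exactly as you assemble them.
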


  \subsection{Fields $L_d$ with trivial or cyclic $2$-class group}\label{subsection    2-class group  is trivial or cyclic}
  Using   Theorem \ref{prop number of ramified primes} and the theorems in the  previous sections, one can  easily deduce the following  results.
  \begin{theorem}
  	The $2$-class group   of $L_{d}$ is trivial if and only if $d$ is a prime  congruent to either $3$ or $5\pmod8$.
  \end{theorem}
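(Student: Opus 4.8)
The plan is to read everything off the rank formula $r_2(d)=t_d-1-e_d$ of \eqref{egalite du 2-rang   =t-1-e}, noting that $\mathrm{Cl}_2(L_d)$ is trivial exactly when $r_2(d)=0$. The first step is to record the two rigid constraints on the right-hand side. By Theorem \ref{prop number of ramified primes} we always have $t_d=2(q+r)$, so $t_d$ is \emph{even} and, for $d>1$, satisfies $t_d\geq 2$ since $r\geq 1$. On the other hand $K$ is biquadratic, so $e_d\in\{0,1,2\}$. Thus the equation $r_2(d)=0$, i.e.\ $t_d=1+e_d$, must hold with an even left-hand side and a right-hand side in $\{1,2,3\}$.

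The second step is to turn this into a rigidity statement. The only even value in $\{1,2,3\}$ is $2$, so $t_d=1+e_d$ forces $t_d=2$ and $e_d=1$. The condition $t_d=2$ reads $q+r=1$; together with $r\geq 1$ and $0\leq q\leq r$ this forces $r=1$ and $q=0$, that is, $d=p$ is a single prime with $p\not\equiv 1\pmod 8$. In particular every composite $d$ and every prime $p\equiv 1\pmod 8$ is excluded, because in those cases $r\geq 2$ or $q\geq 1$ gives $t_d\geq 4$ and hence $r_2(d)=t_d-1-e_d\geq 4-1-2=1>0$.

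The final step is to invoke Theorem \ref{thm p 3 mod 8} for a single prime $p\not\equiv 1\pmod 8$: it states that $r_2(p)=0$ precisely when $p\equiv 3$ or $5\pmod 8$, while $r_2(p)=1$ when $p\equiv 7\pmod 8$. This delivers both implications at once. For the converse ($\Leftarrow$), if $d=p\equiv 3$ or $5\pmod 8$ then $r_2(p)=0$ and $\mathrm{Cl}_2(L_d)$ is trivial; for the direct implication ($\Rightarrow$), the parity argument above has already reduced triviality to the case of such a prime, and Theorem \ref{thm p 3 mod 8} then forces $p\equiv 3$ or $5\pmod 8$. (One should note that the argument concerns $d>1$; the degenerate value $d=1$ gives $L_1=K$ with $h(K)=1$ and is to be excluded from the statement.)

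I expect no real obstacle here: the proof is essentially a bookkeeping consequence of the parity of $t_d$ combined with the bound $e_d\leq 2$, feeding into Theorem \ref{thm p 3 mod 8}. The only point demanding a moment's care is the deduction that $t_d$ must equal $2$ rather than $3$ or $1$ — this is exactly where the evenness of $t_d=2(q+r)$ is indispensable, since without it the value $e_d=1$ could not be pinned down and composite or $p\equiv 1\pmod 8$ cases could not be ruled out so cleanly.
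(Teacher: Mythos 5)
Your proof is correct and follows essentially the same route as the paper, which simply cites Theorem \ref{prop number of ramified primes} together with the rank theorems of \S\ \ref{secion sur le rang de Ld} and leaves the deduction to the reader; your parity observation (that $t_d=2(q+r)$ is even while $1+e_d\in\{1,2,3\}$, forcing $t_d=2$ and $e_d=1$) is a clean way to collapse the case analysis to the single-prime, non-$1\bmod 8$ case handled by Theorem \ref{thm p 3 mod 8}. The only caveats are minor and correctly flagged: one must exclude the degenerate $d=1$ and, as the paper does, reduce to positive odd $d$ via $L_d=L_{-d}$.
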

  \begin{theorem}
  	The $2$-class group    of $L_{d}$ is cyclic nontrivial if and only if $d$  takes one of the following forms
  	\begin{enumerate}[\rm1.]
  		\item $d= q\equiv 7\pmod 8$ is a prime,
  		\item $d=qp$, where   $q\equiv 3\pmod 8$ and $p\equiv 5\pmod 8$ are primes.
  	\end{enumerate}
  \end{theorem}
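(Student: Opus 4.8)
The plan is to recast the statement as the single numerical condition $r_2(d)=1$: since $\mathrm{Cl}_2(L_d)$ is a finite abelian $2$-group, it is cyclic and nontrivial precisely when its $2$-rank equals $1$. It therefore suffices to sweep through all odd square-free $d>2$ (recall $L_d=L_{-d}=L_{2d}$) and, using the master formula $r_2(d)=t_d-1-e_d$ of \eqref{egalite du 2-rang   =t-1-e} together with $t_d=2(q+r)$ from Theorem~\ref{prop number of ramified primes}, decide exactly when $r_2(d)=1$. The analysis splits along the very same cases already settled in \S\ref{secion sur le rang de Ld}, so each value $r_2(d)$ can be read off from the results proved there; both implications of the theorem then fall out at once.

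First I would dispose of the prime case. If $d=p$ is prime, then Theorem~\ref{thm p 3 mod 8} gives $r_2(p)=0$ for $p\equiv3,5\pmod8$ and $r_2(p)=1$ for $p\equiv7\pmod8$, while for $p\equiv1\pmod8$ we have $t_p=4$ and Lemma~\ref{lm e_p with p=1 mod 8} yields $e_p\in\{0,1\}$, so $r_2(p)=3-e_p\in\{2,3\}$. Thus among primes only $p\equiv7\pmod8$ gives $r_2=1$, which is form~$1$. Next I would rule out every composite $d$ whose prime divisors all lie in one coset $\pmod8$; here $r\ge2$, and Theorems~\ref{thm div con 3} and \ref{thm r_2(d) with pi=1 mod 8} give $r_2(d)=2r-2\ge2$, or $2r-1\ge3$, or $4r-1-e_d\ge4r-3\ge5$. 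In each subcase $r_2(d)\ge2$, so none contributes.

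The decisive and most delicate part is the mixed-coset case, governed by Theorem~\ref{thm rank in general case}, whose four items are exhaustive and mutually exclusive. The key device throughout is the parity constraint: $t_d=2(q+r)$ is even and $r\ge2$. In item~$1$ one has $r_2=t_d-3=2(q+r)-3$, so $r_2=1$ forces $q+r=2$, i.e.\ $r=2$ and $q=0$; since this item requires a divisor $\equiv3$ and a divisor $\equiv5\pmod8$, the two primes are exactly these, giving $d=qp$ with $q\equiv3$, $p\equiv5\pmod8$, which is form~$2$. In items~$2$ and $3$ the value is $t_d-2$ or $t_d-3$: here $t_d-2=1$ is impossible by parity, while $t_d-3=1$ again forces $r=2$, $q=0$; but then $d$ has no prime divisor $\equiv1\pmod8$, so the refinement to $t_d-3$ cannot trigger and the true value is $t_d-2=2\ne1$. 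Finally, in item~$4$ all primes are $\equiv\pm1\pmod8$, and the mixed hypothesis forces at least one $\equiv1$ and one $\equiv7$, so $q\ge1$, $r\ge2$, whence $t_d\ge6$ and $r_2=t_d-1-e_{d_1}\ge6-1-2=3$.

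Collecting the three cases shows $r_2(d)=1$ holds exactly for forms~$1$ and~$2$, which proves both directions simultaneously. The one point that demands genuine care is the argument in items~$2$ and $3$: the parity of $t_d$ already kills the naive solution $t_d=3$, and one must then further observe that the only surviving candidate $r=2,\ q=0$ automatically lands in the $t_d-2$ branch rather than $t_d-3$, because the branch condition requires a prime $\equiv1\pmod8$ that is absent when $q=0$. This coupling between $q$ and the branch condition is where the proof would go wrong if treated carelessly, so I expect it to be the main obstacle.
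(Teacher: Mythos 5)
Your proposal is correct and takes essentially the same route as the paper: the paper simply asserts that the classification follows from Theorem~\ref{prop number of ramified primes} and the rank theorems of \S\ref{secion sur le rang de Ld}, and your case sweep (prime case, single-coset composite case, and the four mixed-coset items with the parity of $t_d=2(q+r)$ and the coupling between $q=0$ and the $t_d-3$ branch) is precisely the verification the authors leave to the reader.
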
	
  
  \subsection{Fields $L_d$ with $2$-class group of rank $2$}\label{sous-section les d tels que le rang = 2}
  By Theorem \ref{prop number of ramified primes} and the  theorems in the  previous sections, we  easily deduce the following  results.
  
  \begin{theorem}\label{thm 2-rank = 2}
  	The rank of the $2$-class group    of $L_{d}$ equals $2$  if and only if $d$  takes one of the following forms
  	\begin{enumerate}[\rm1.]
  		\item $d=q_1q_2$, with   $q_1\equiv q_2 \equiv3\pmod8$,
  		\item $d=p_1p_2$, with   $ p_1\equiv p_2\equiv5\pmod8$,
  		\item $d=q_1q_2$, with   $q_1\equiv3\pmod8$ and $q_2\equiv7\pmod8$,
  		\item $d=pq$, with   $p\equiv5\pmod8$ and $q\equiv7\pmod8$,
  		\item $d=p\equiv1\pmod8 $ is a prime satisfying $\left[p\equiv 9 \pmod{16} \text{ or }\left(\frac{2}{ p}\right)_4\not=\left(\frac{p}{ 2}\right)_4\right]$,\\
  		
  	\end{enumerate}
  	\vspace{-0.5cm}	where $ p_i, q_i,p$ and $q$ are prime integers.
  \end{theorem}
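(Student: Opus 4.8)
The plan is to push everything through the master identity $r_2(d) = t_d - 1 - e_d$ of \eqref{egalite du 2-rang   =t-1-e}, combined with $t_d = 2(q+r)$ from Theorem \ref{prop number of ramified primes}, where $r$ is the number of distinct prime divisors of $d$ and $q$ the number of those congruent to $1 \pmod 8$. The first move is a parity argument: since $t_d$ is always even and $e_d \in \{0,1,2\}$ (as $K$ is biquadratic), the condition $r_2(d) = 2$, i.e. $t_d - e_d = 3$, can only be met with $e_d = 1$ and $t_d = 4$. Hence $q + r = 2$, and because $0 \le q \le r$ this leaves exactly the two configurations $(q,r) = (1,1)$ and $(q,r) = (0,2)$. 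In both, rank $2$ is thus equivalent to $e_d = 1$, and the whole proof reduces to reading off, in each configuration, precisely when $e_d = 1$.

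I would first dispose of the single-prime configuration $(q,r) = (1,1)$, i.e. $d = p$ with $p \equiv 1 \pmod 8$. Here $t_d = 4$ automatically, so I only need $e_p = 1$; Lemma \ref{lm e_p with p=1 mod 8} translates this directly into $p \equiv 9 \pmod{16}$ or $\left(\frac{2}{p}\right)_4 \ne \left(\frac{p}{2}\right)_4$, which is exactly form $5$.

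For the configuration $(q,r) = (0,2)$, write $d = p_1 p_2$ with $p_1, p_2 \not\equiv 1 \pmod 8$, so each $p_i$ lies in $\{3,5,7\} \pmod 8$. I would split according to whether the two primes share a coset. When they do, Theorem \ref{thm div con 3} gives $r_2 = 2r - 2 = 2$ in the classes $3$ and $5$ (yielding forms $1$ and $2$) and $r_2 = 2r - 1 = 3$ in the class $7$ (excluded). When they lie in distinct cosets, Theorem \ref{thm rank in general case} applies: the pair $\{3,5\}$ is covered by its item $1$, giving $r_2 = t_d - 3 = 1$ (excluded, the cyclic case); the pairs $\{3,7\}$ and $\{5,7\}$ fall under items $2$ and $3$ respectively, and since no divisor of $d$ is $\equiv 1 \pmod 8$, the refining condition there cannot be triggered, forcing $r_2 = t_d - 2 = 2$ and producing forms $3$ and $4$. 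Collecting the surviving cases yields precisely the five forms, and because the enumeration characterizes $r_2(d) = 2$ outright, both directions of the equivalence come for free.

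This is in the end a finite case-check powered entirely by results already in hand, so no step is genuinely hard. The only place demanding care is the mixed-coset analysis, where one must correctly invoke the sharp refinement of Theorem \ref{thm rank in general case}: the extra drop to $r_2 = t_d - 3$ is governed by the existence of a prime divisor $\equiv 1 \pmod 8$ obeying a quartic-residue condition, and verifying that this drop is impossible when $q = 0$ is what pins the mixed pairs $\{3,7\}$ and $\{5,7\}$ to rank exactly $2$.
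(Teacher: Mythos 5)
Your proposal is correct and follows essentially the route the paper intends: the paper states that the result is "easily deduced" from Theorem \ref{prop number of ramified primes} and the preceding rank theorems, and your argument — reducing via the parity of $t_d=2(q+r)$ and $e_d\in\{0,1,2\}$ to the single possibility $t_d=4$, $e_d=1$, then reading off each configuration from Lemma \ref{lm e_p with p=1 mod 8}, Theorem \ref{thm div con 3} and Theorem \ref{thm rank in general case} — is exactly that deduction, carried out correctly (including the key observation that the drop to $t_d-3$ in the mixed cases $\{3,7\}$ and $\{5,7\}$ cannot occur when no prime divisor is $\equiv 1\pmod 8$).
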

  \subsection{Fields $L_d$ with $2$-class group of type $(2, 2)$}	
  Now we shall determine the integers $d$ for which  the $2$-class group of  $L_d$ is of type $(2, 2)$. The main theorem of this subsection is the following. For the proof see Propositions \ref{thm d=p equiv 1 mod 8, when clL=(2,2)}, \ref{4}, \ref{5}, \ref{prop d=pq  cl=(2,2)} and \ref{prop d=q1q2  cl=(2,2)} below.
  \begin{theorem}
  	Let $d>2$ be an odd    square-free integer. The $2$-class group of $L_d=\QQ(\sqrt d, \sqrt2, i)$ is of type $(2,2)$ if and only if $d$ takes one of the following forms
  	\begin{enumerate}[\rm1.]
  		\item $d=p$, with $p\equiv 1 \pmod{16}$  and $\left(\frac{2}{p}\right)_4\not=\left(\frac{p}{2}\right)_4$.
  		\item $d=pq$, with $p\equiv 5\pmod 8$, $q\equiv 7\pmod 8$ and $\left(\frac{p}{q}\right)=-1$.
  		\item $d=q_1q_2$, with $q_1\equiv 3\pmod 8$, $q_2\equiv 7\pmod 8$ and $\left(\frac{q_1}{q_2}\right)=-1$.
  		
  	\end{enumerate}
  	where $q_i,q$ and $p$ are primes.
  \end{theorem}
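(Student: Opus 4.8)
The plan is to disentangle the two invariants hidden in the symbol $(2,2)$: the rank and the order. A finite abelian $2$-group is of type $(2,2)$ exactly when it has rank $2$ and order $4$. Since $\mathrm{rank}\,\mathrm{Cl}_2(L_d)=r_2(d)$ and $|\mathrm{Cl}_2(L_d)|=h_2(L_d)$, the field $L_d$ has $2$-class group of type $(2,2)$ if and only if $r_2(d)=2$ and $h_2(L_d)=4$. I would therefore first appeal to Theorem \ref{thm 2-rank = 2}: the condition $r_2(d)=2$ already forces $d$ to be one of the five shapes listed there, so it only remains to decide, for each of those shapes, whether $\mathrm{Cl}_2(L_d)$ is elementary, i.e. whether $h_2(L_d)=4$ or $h_2(L_d)\geq 8$.

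To settle the order I would compute $h_2(L_d)$ exactly. The most convenient tool is Kuroda's class number formula (Lemma \ref{lm Kurodas formula}) applied to $L_d$ regarded as a biquadratic extension of CM-type of the real quadratic field $\QQ(\sqrt2)$; its three intermediate quartic fields are $K=\QQ(\sqrt2,i)$, the real field $L_d^{+}=\QQ(\sqrt2,\sqrt d)$, and the imaginary biquadratic field $\QQ(\sqrt2,\sqrt{-d})$. Because $h(K)=h(\QQ(\sqrt2))=1$, the formula collapses to an expression for $h_2(L_d)$ in terms of $h_2(L_d^{+})$, $h_2(\QQ(\sqrt2,\sqrt{-d}))$ and the unit and root-of-unity indices $Q_{L_d},\,\omega_{L_d},\,Q_{\QQ(\sqrt2,\sqrt{-d})}$; Wada's formula (Proposition \ref{wada's f.}) gives the same quantity directly from the seven quadratic subfields and the index $q(L_d)$, and serves as a cross-check. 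The $2$-class numbers of the biquadratic subfields that enter are controlled by their $2$-ranks, for which Lemma \ref{lm de parry} and the cited work on imaginary biquadratic fields are available.

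Then I would run the five shapes case by case, which is the content of Propositions \ref{thm d=p equiv 1 mod 8, when clL=(2,2)}, \ref{4}, \ref{5}, \ref{prop d=pq  cl=(2,2)} and \ref{prop d=q1q2  cl=(2,2)}. For $d=q_1q_2$ with $q_1\equiv q_2\equiv 3\pmod 8$ and for $d=p_1p_2$ with $p_1\equiv p_2\equiv 5\pmod 8$, the subfield class numbers force $h_2(L_d)\geq 8$, so $\mathrm{Cl}_2(L_d)$ is never of type $(2,2)$; this explains their absence from the statement. For the three surviving shapes $h_2(L_d)=4$ precisely under an extra splitting condition: $\left(\frac{q_1}{q_2}\right)=-1$ when $q_1\equiv3,\ q_2\equiv7\pmod8$, then $\left(\frac{p}{q}\right)=-1$ when $p\equiv5,\ q\equiv7\pmod8$, and, in the prime case $p\equiv1\pmod8$, the sharper dichotomy of Lemma \ref{lm e_p with p=1 mod 8} that isolates $p\equiv1\pmod{16}$ from $p\equiv9\pmod{16}$ (the latter forcing $h_2(L_p)\geq 8$) together with $\left(\frac{2}{p}\right)_4\neq\left(\frac{p}{2}\right)_4$. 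Assembling these conditions yields exactly the three families in the theorem.

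The hard part will be the exact evaluation of the unit indices, namely the Hasse index $Q_{L_d}$ and the multiquadratic unit index $q(L_d)$, and of the $2$-class numbers, not merely the ranks, of the imaginary biquadratic subfields, since it is precisely these orders that separate $(2,2)$ from $(2,4)$. I would treat the unit side by analysing the behaviour of $\zeta_8$ and $\varepsilon_2$ under the norm residue symbol, exactly as in the rank computations of the previous sections, and the biquadratic class numbers by combining Lemma \ref{lm de parry} with the known unit structure of those fields; the governing Legendre- and quartic-residue conditions should then emerge directly from these computations.
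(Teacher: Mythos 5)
Your proposal follows the paper's own strategy essentially verbatim: reduce to the rank-$2$ classification of Theorem \ref{thm 2-rank = 2}, then decide for each of the five shapes whether $h_2(L_d)=4$ or $h_2(L_d)\geq 8$ by computing the $2$-class number via Kuroda's formula (Lemma \ref{lm Kurodas formula}) and Wada's formula (Proposition \ref{wada's f.}), with the unit indices $Q_{L_d}$, $q(L_d)$ and the $2$-class numbers of the quadratic and biquadratic subfields supplying the Legendre- and quartic-residue conditions. This is exactly how the paper proceeds in Propositions \ref{thm d=p equiv 1 mod 8, when clL=(2,2)}, \ref{4}, \ref{5}, \ref{prop d=pq  cl=(2,2)} and \ref{prop d=q1q2  cl=(2,2)}, so the plan is correct and not a different route.
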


 % To prove the main theorem of this subsection, we  need some preliminary results.

  \noindent To prove this theorem, we shall check all the items of Theorem \ref{thm 2-rank = 2}.
  \begin{prop}\label{thm d=p equiv 1 mod 8, when clL=(2,2)}
  	Let $p$ be a prime such that $p\equiv 1\pmod 8$. Then
  	$$
  	\mathrm{Cl}_2(L_p)=(2,2) \text{ if and only if } p\equiv 1 \pmod {16} \text{ and } \left(\frac{2}{p}\right)_4\not=\left(\frac{p}{2}\right)_4.
  	$$
  	Moreover,  $h_2(L_p)=h_2(-2p)$ if and only if $\left(\frac{2}{p}\right)_4\not=\left(\frac{p}{2}\right)_4$.
  \end{prop}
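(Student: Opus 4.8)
The group $\mathrm{Cl}_2(L_p)$ is of type $(2,2)$ precisely when it has $2$-rank $2$ and order $4$. By Lemma~\ref{lm e_p with p=1 mod 8} and Theorem~\ref{thm 2-rank = 2}, the rank equals $2$ exactly when $e_p=1$, i.e. when $p\equiv 9\pmod{16}$ or $\left(\frac{2}{p}\right)_4\neq\left(\frac{p}{2}\right)_4$; in all these cases $h_2(L_p)\geq 4$. Hence the whole statement reduces to computing $h_2(L_p)$ and deciding when it equals $4$. The plan is therefore to evaluate $h_2(L_p)$ by a class-number formula, to compare it with $h_2(-2p)$ (which settles the ``moreover'' part), and finally to use the known $2$-class number of $\QQ(\sqrt{-2p})$.

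For the first step I would apply Kuroda's formula in the form of Lemma~\ref{lm Kurodas formula} to the CM biquadratic extension $L_p/\QQ(\sqrt 2)$, whose three intermediate fields are $K=\QQ(\zeta_8)$, $F:=\QQ(\sqrt 2,\sqrt{-p})$ and the maximal real subfield $L_p^{+}=\QQ(\sqrt 2,\sqrt p)$. Since $h(K)=h(\QQ(\sqrt 2))=1$ by Lemma~\ref{lm unite zeta8}, and since $\omega_{L_p}=\omega_K=8$ while $\omega_F=2$, the formula collapses to $h(L_p)=\tfrac{Q_{L_p}}{2\,Q_F}\,h(F)\,h(L_p^{+})$. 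A further application of Wada's formula (Proposition~\ref{wada's f.}) to the biquadratic fields $F$ and $L_p^{+}$ expresses their $2$-class numbers through those of the seven quadratic subfields of $L_p$; here $h_2(\QQ(\sqrt p))=1$ because $p\equiv1\pmod4$, and $h_2(\QQ(i))=h_2(\QQ(\sqrt 2))=h_2(\QQ(\sqrt{-2}))=1$, so that only $h_2(-p)$, $h_2(2p)$ and $h_2(-2p)$ survive, weighted by $Q_{L_p},Q_F$ and the Hasse indices of $F$ and $L_p^{+}$.

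The heart of the argument is to show that, after collecting all these factors, one obtains a clean identity $h_2(L_p)=c\cdot h_2(-2p)$ in which the power of two $c\geq 1$ (built from the unit indices together with $h_2(-p),h_2(2p)$) equals $1$ exactly when $\left(\frac{2}{p}\right)_4\neq\left(\frac{p}{2}\right)_4$, and is $\geq 2$ otherwise. This is where I expect the real difficulty: pinning down the fundamental units of $\QQ(\sqrt 2,\sqrt p)$ and $\QQ(\sqrt 2,\sqrt{-p})$, hence the indices $Q_{L_p}$ and $Q_F$, and reading off the quartic symbols from the norm-residue data via Remark~\ref{corr of thm de Ssi Zekhnini}. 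Granting this, the ``moreover'' assertion $h_2(L_p)=h_2(-2p)\Longleftrightarrow\left(\frac{2}{p}\right)_4\neq\left(\frac{p}{2}\right)_4$ is immediate.

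It then remains to evaluate $h_2(-2p)$. By genus theory $\QQ(\sqrt{-2p})$ has exactly two ramified primes ($2$ and $p$), so its $2$-class group is cyclic and $h_2(-2p)$ is a single power of $2$, with $4\mid h_2(-2p)$ since $p\equiv1\pmod8$. Invoking the classical criterion for divisibility by $8$ of $h(\QQ(\sqrt{-2p}))$, one gets, within the regime $\left(\frac{2}{p}\right)_4\neq\left(\frac{p}{2}\right)_4$ where $h_2(L_p)=h_2(-2p)$, that $h_2(-2p)=4$ if $p\equiv1\pmod{16}$ and $h_2(-2p)\geq 8$ if $p\equiv9\pmod{16}$. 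Combined with the rank being $2$, this yields $\mathrm{Cl}_2(L_p)=(2,2)$ exactly when $p\equiv1\pmod{16}$ and $\left(\frac{2}{p}\right)_4\neq\left(\frac{p}{2}\right)_4$. In the complementary case $\left(\frac{2}{p}\right)_4=\left(\frac{p}{2}\right)_4$ the same identity forces $h_2(L_p)\geq 2\,h_2(-2p)\geq 8>4$ whenever the rank is $2$ (which then means $p\equiv 9\pmod{16}$), so $\mathrm{Cl}_2(L_p)$ is never of type $(2,2)$ there, completing the equivalence.
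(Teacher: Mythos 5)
Your overall strategy is the same as the paper's: apply Lemma~\ref{lm Kurodas formula} to $L_p/\QQ(\sqrt 2)$ with intermediate fields $K$, $F=\QQ(\sqrt2,\sqrt{-p})$ and $L_p^{+}$, reduce to quadratic class numbers via Proposition~\ref{wada's f.}, compare $h_2(L_p)$ with $h_2(-2p)$, and finish with the classical $8$-divisibility criterion for $h(\QQ(\sqrt{-2p}))$. The reduction of the final case ($p\equiv 9\pmod{16}$, equal quartic symbols) via the inequality $h_2(L_p)\geq 2\,h_2(-2p)\geq 8$ is a legitimate variant of the paper's argument, which instead invokes a $4$-rank result of Azizi--Taous there.

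However, there is a genuine gap: the entire content of the proposition lives in the step you explicitly defer, namely the identity $h_2(L_p)=c\cdot h_2(-2p)$ with $c=1$ exactly when $\left(\frac{2}{p}\right)_4\neq\left(\frac{p}{2}\right)_4$ and $c\geq 2$ otherwise. You correctly identify that this requires controlling the unit indices and the auxiliary class numbers, but you do not supply any of the inputs, and they are not routine. Concretely, the paper assembles $c=\frac{Q_{L_p}}{Q_F}\cdot\frac{1}{2}\,h_2(L_p^{+})\cdot\frac{1}{2}\,h_2(-p)$ from four separate external facts: $Q_{L_p}=1$ (Azizi--Taous), $E_F=\langle -1,\varepsilon_2\rangle$ hence $Q_F=q(F)=1$ (Azizi, using $N(\varepsilon_2)=-1$), the parity criterion of Ku\v{c}era giving $h(L_p^{+})$ odd precisely in the regime $\left(\frac{2}{p}\right)_4\neq\left(\frac{p}{2}\right)_4$, and Brown's theorem giving $h_2(-p)=4$ if and only if $\left(\frac{2}{p}\right)_4\neq\left(\frac{p}{2}\right)_4$ (and $h_2(-p)\geq 8$ otherwise). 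Without at least the last two of these, the dichotomy $c=1$ versus $c\geq 2$ is an assertion, not a proof; note in particular that your suggestion to treat $h_2(L_p^{+})$ by Wada's formula would require the fundamental units of the real field $\QQ(\sqrt2,\sqrt p)$, which is exactly the hard computation the paper avoids by citing Ku\v{c}era. Your handling of $h_2(-2p)$ and the endgame is fine once the identity is in place, but as written the proposal is a correct outline with its central lemma left unproved.
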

 \begin{proof}
 	Let $p\equiv 1 \pmod 8$ be a prime.
 	Set  $L_p^+=\mathbb{Q}(\sqrt{2}, \sqrt{p})$, $K=\mathbb{Q}(\sqrt{2}, i)$  and $K'=\mathbb{Q}(\sqrt{2}, \sqrt{-p})$. By applying Lemma \ref{lm Kurodas formula} to the extension $L_p/\mathbb{Q}(\sqrt{2})$, we have
 	\begin{eqnarray*}
 		h(L_p)=\frac{Q_{L_p}}{Q_{K}Q_{K'}}\frac{\omega_{L_p}}{\omega_{K} \omega_{K'}}
 		\frac{h( L_{p}^+)h(K)h(K')}{h(\mathbb{Q}(\sqrt{2}))^2}\cdot
 	\end{eqnarray*}
 	
 	We have $ h(\mathbb{Q}(\sqrt{2}))=h(K)=1$. By \cite[Théorème 3]{taous2008}, $Q_{L_p}=1$ and by Lemma \ref{lm unite zeta8} $Q_{K}=1$. Since $ \omega_{L_p}= \omega_{K}=8$ and $ \omega_{K'}=2$, then by passing to the $2$-part in the above equality we get
 	\begin{eqnarray}\label{eq 2}
 	h_2(L_p)=\frac{1}{2Q_{K'}}h_2( L_{p}^+)h_2(K')\cdot
 	\end{eqnarray}

 	As $\varepsilon_2$ has  a negative norm, so by the  item $(2)$ of Section $3$ of \cite{AZ99} we obtain that
 	$E_{K'}=\langle -1, \varepsilon_2  \rangle$. This in turn implies that $q(K')=Q_{K'}=1$. From which we infer, by Proposition \ref{wada's f.}, that
 	$h_2(K')=\frac{1}{2} \cdot 1 \cdot h_2(2) h_2(-p)  h_2(-2p)=\frac{1}{2}     h_2(-p)  h_2(-2p)$. It follows, by the equality \eqref{eq 2}, that
 	
 	\begin{eqnarray}\label{eq 3}
 	h_2(L_p)=\frac{1}{4}h_2( L_{p}^+)  h_2(-p)  h_2(-2p)\cdot
 	\end{eqnarray}
 	
 	Keep the notations of  \cite[Theorem 2]{ezrabrown}, by this theorem we have 
 	$h_2(-p)=4$ if and only if   $\genfrac(){}{0}{e}{p}=-1.$ 
 	From the  proof of    \cite[Theorem 1]{ezrabrown}, one deduces easily that 
 	$\genfrac(){}{0}{e}{p}=\genfrac(){}{0}{2}{p}_4  \genfrac(){}{0}{p}{2}_4 $. Therefore,   $h_2(-p)=4$ if and only if $\genfrac(){}{0}{2}{p}_4 \not=\genfrac(){}{0}{p}{2}_4$.
 	
 	Thus by Theorem \ref{thm 2-rank = 2}, we   have only  two  cases to check.
 	\begin{enumerate}[\rm $\bullet$]
 		\item If $\left(\frac{2}{p}\right)_4\not=\left(\frac{p}{2}\right)_4=(-1)^{\frac{p-1}{8}}$, then by
 		\cite[Theorem 2]{kuvcera1995parity},
 		$h( L_{p}^+)$ is odd. So
 		$$h_2(L_p)=\frac{1}{4}h_2(-p)h_2(-2p).$$
 		As  $h_2(-p)=4$, then
 		$h_2(L_p)=h_2(-2p).\label{formula de 2nc}$
 		From   \cite{Scholz1935} we deduce that $h_2(-2p)=4$ if and only if $\left(\frac{p}{2}\right)_4=1$.
 		\item If $p\equiv 9\pmod{16}$ and $\left(\frac{2}{p}\right)_4=\left(\frac{p}{2}\right)_4=-1$, then by  \cite[Théorème 10]{taous2008},   the rank of   the $4$-class group is equal to $1$  and from Theorem \ref{thm r_2(d) with pi=1 mod 8}, we infer that $h_2(L_p)$ is divisible by $8$. Thus $\mathrm{Cl}_2(L_d)\not=(2,2)$. Which achieves the proof.
 	\end{enumerate}
 \end{proof}

  \begin{prop}\label{4}
  	Let $d=p_1p_2$  where $p_1$ and $p_2$ are two primes such that $p_1\equiv p_2\equiv 5 \pmod 8$. Then $h_2(L_d)\equiv 0\pmod 8$ and  $\mathrm{Cl}_2(L_d)\not=(2,2)$.
  \end{prop}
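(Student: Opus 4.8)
The plan is to compute $h_2(L_d)$ explicitly and prove that $8\mid h_2(L_d)$; since a group of type $(2,2)$ has order $4$, this divisibility alone forces $\mathrm{Cl}_2(L_d)\neq(2,2)$ and simultaneously yields the first assertion. (Theorem \ref{thm div con 3} already gives $r_2(d)=2r-2=2$, so $\mathrm{Cl}_2(L_d)$ has rank $2$ and the only remaining possibility to exclude is that its order equals $4$.) Just as in the proof of Proposition \ref{thm d=p equiv 1 mod 8, when clL=(2,2)}, I would apply Kuroda's class number formula (Lemma \ref{lm Kurodas formula}) to the CM biquadratic extension $L_d/\mathbb{Q}(\sqrt2)$, whose three intermediate fields are $K=\mathbb{Q}(\sqrt2,i)$, $K'=\mathbb{Q}(\sqrt2,\sqrt{-d})$, and the maximal real subfield $L_d^+=\mathbb{Q}(\sqrt2,\sqrt d)$.

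First I would dispose of the elementary factors. Since $p_1\equiv p_2\equiv5\pmod8$ we have $d\equiv1\pmod8$, so $i\notin K'$ and $\omega_{K'}=2$, whereas $\omega_{L_d}=\omega_K=8$; moreover $h(\mathbb{Q}(\sqrt2))=h(K)=1$ and $Q_K=1$ by Lemma \ref{lm unite zeta8}. Because $\varepsilon_2$ has negative norm and the imaginary quadratic subfields $\mathbb{Q}(\sqrt{-d}),\mathbb{Q}(\sqrt{-2d})$ (with $d>3$) have unit group $\{\pm1\}$, one gets $E_{K'}=\langle-1,\varepsilon_2\rangle$ exactly as in the cited proposition, so $q(K')=Q_{K'}=1$. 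Kuroda's formula then collapses to $h_2(L_d)=\tfrac{Q_{L_d}}{2}\,h_2(K')\,h_2(L_d^+)$, and Wada's formula (Proposition \ref{wada's f.}) applied to the imaginary biquadratic field $K'$ gives $h_2(K')=\tfrac12\,h_2(-d)\,h_2(-2d)$.

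Next I would invoke Gauss genus theory. As $d\equiv1\pmod8$, each of $\mathbb{Q}(\sqrt{-d})$ and $\mathbb{Q}(\sqrt{-2d})$ is imaginary with exactly the three ramified primes $2,p_1,p_2$, hence has $2$-class group of rank $2$; therefore $4\mid h_2(-d)$ and $4\mid h_2(-2d)$, so $16\mid h_2(-d)h_2(-2d)$ and consequently $8\mid h_2(K')$. Writing $h_2(K')=8k$, the reduction becomes $h_2(L_d)=4\,Q_{L_d}\,k\,h_2(L_d^+)$, which is already divisible by $8$ the moment $Q_{L_d}\,h_2(L_d^+)$ is even, or as soon as one of the two $4$-ranks is positive (in which case $k$ is even).

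The remaining, and hardest, point is precisely to secure this final factor of $2$ in the worst case, namely when the $4$-ranks of $\mathbb{Q}(\sqrt{-d})$ and $\mathbb{Q}(\sqrt{-2d})$ both vanish (so $h_2(-d)=h_2(-2d)=4$ and $k=1$) and $Q_{L_d}=1$. Then the extra $2$ must come from the real subfield, i.e. one must prove $2\mid h_2(L_d^+)$. Expanding $h_2(L_d^+)=\tfrac{q(L_d^+)}{4}h_2(d)h_2(2d)$ by Wada, this reduces to controlling the unit index $q(L_d^+)$ together with the norms $N(\varepsilon_d)$ and $N(\varepsilon_{2d})$ of the fundamental units of $\mathbb{Q}(\sqrt d)$ and $\mathbb{Q}(\sqrt{2d})$ — equivalently, the \emph{wide} $2$-ranks of these real fields, which (unlike the imaginary case) are not determined merely by counting ramified primes. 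I expect the clean resolution to be a coherence/reflection relation linking the $2$-divisibility carried by $h_2(d)h_2(2d)$ and the unit indices to that carried by $h_2(-d)h_2(-2d)$: whenever the imaginary side fails to furnish the extra $2$, the real side must furnish it, so that the global octic unit index $q(L_d)$ always supplies what is missing. Establishing this unit-index identity is the main obstacle; once $8\mid h_2(L_d)$ is in hand, we get $h_2(L_d)\geq8>4$, whence $\mathrm{Cl}_2(L_d)\neq(2,2)$ and $h_2(L_d)\equiv0\pmod8$, as claimed.
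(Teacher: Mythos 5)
Your overall strategy (Kuroda over $\mathbb{Q}(\sqrt2)$, then Wada's formula and genus theory applied to the quadratic subfields of $K'=\mathbb{Q}(\sqrt2,\sqrt{-d})$) is not the paper's, and as written it does not close: the step you yourself flag as ``the remaining, and hardest, point'' is a genuine gap, and it is not a peripheral worst case but the \emph{only} case. For $d=p_1p_2$ with $p_1\equiv p_2\equiv 5\pmod 8$, a R\'edei-matrix computation shows that the $4$-ranks of $\mathbb{Q}(\sqrt{-d})$ and $\mathbb{Q}(\sqrt{-2d})$ both vanish whatever the value of $\left(\frac{p_1}{p_2}\right)$ (since $\left(\frac{-1}{p_i}\right)=1$ and $\left(\frac{2}{p_i}\right)=-1$, the three rows of the R\'edei matrix always have rank $2$), so $h_2(-d)=h_2(-2d)=4$, hence $h_2(K')=8$ exactly, and your reduction leaves $h_2(L_d)=4\,Q_{L_d}\,h_2(L_d^+)$ on the nose. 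The entire content of the proposition then rests on the parity of $Q_{L_d}\,h_2(L_d^+)$, i.e.\ on unit norms and class numbers of the real fields $\mathbb{Q}(\sqrt{d})$, $\mathbb{Q}(\sqrt{2d})$ and the unit index of $L_d$ --- and for this you offer only the expectation of a ``coherence/reflection relation'' rather than a proof. That is precisely the part that would have to be established, and it is not a formality.

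The paper sidesteps this entirely by decomposing over $\mathbb{Q}(i)$ instead of $\mathbb{Q}(\sqrt2)$: Kuroda's formula there reads $h_2(L_d)=\frac14 Q(L_d/\mathbb{Q}(i))\,h_2(K_1)\,h_2(K_2)$ with $K_1=\mathbb{Q}(\sqrt{2d},i)$ and $K_2=\mathbb{Q}(\sqrt d,i)$, and Lemma \ref{lm de parry} gives $2$-rank $s+s_0-1=3$ for $K_1$ and $s+s_0-2=2$ for $K_2$ (primes $\equiv 1\pmod 4$ are counted twice in that rank formula), whence $8\mid h_2(K_1)$, $4\mid h_2(K_2)$, and $8\mid h_2(L_d)$ with no unit-index input at all. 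The lesson is that the factor $8$ is visible from rank counts alone, but only over the base $\mathbb{Q}(i)$, where one of the three intermediate fields already has $2$-rank $3$; over your base $\mathbb{Q}(\sqrt2)$ the imaginary quadratic subfields each contribute only rank $2$, which is arithmetically insufficient, and the missing factor of $2$ would have to be extracted from the real side by an argument you have not supplied.
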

  \begin{proof}
  	Consider the following diagram \begin{center}
  		\vspace*{-0.6 cm}
  		\begin{figure}[H]\label{ diagram Ld/Qi}
  			\hspace*{-3cm}
  			\begin{minipage}{5cm}
  				{\footnotesize
  					%\vspace*{3.3cm}
  					\hspace{0.5cm}\begin{tikzpicture} [scale=1.2]
  					% Positionner les noeuds
  					\node (Q)  at (0,0) {$\mathbb Q(i)$};
  					\node (d)  at (-2.5,1) {$K=\mathbb Q(\sqrt 2,i)$};
  					\node (-d)  at (2.5,1) {$K_1=\mathbb Q(\sqrt{2d},i)$};
  					\node (zeta)  at (0,1) {$K_2= \mathbb{Q}(\sqrt d, i)$};
  					\node (zeta d)  at (0,2) {$L_d=\mathbb Q(\sqrt 2,i,\sqrt d)$};
  					\draw (Q) --(d)  node[scale=0.4,midway,below right]{};
  					\draw (Q) --(-d)  node[scale=0.4,midway,below right]{};
  					\draw (Q) --(zeta)  node[scale=0.4,midway,below right]{};
  					\draw (Q) --(zeta)  node[scale=0.4,midway,below right]{};
  					\draw (zeta) --(zeta d)  node[scale=0.4,midway,below right]{};
  					\draw (d) --(zeta d)  node[scale=0.4,midway,below right]{};
  					\draw (-d) --(zeta d)  node[scale=0.4,midway,below right]{};
  					\end{tikzpicture}}
  			\end{minipage}
  			\caption{$L_{p_1p_2}/\mathbb Q(i)$.}
  		\end{figure}
  	\end{center}
  	\vspace{-0.8cm}	By  Kuroda's class number formula (see \cite{lemmermeyer1994kuroda}), we have
  	$$\begin{array}{ll}
  	h_2(L_d)&=\frac{1}{4}Q(L_d/\mathbb{Q}(i))h_2(K_1)h_2(K)h_2(K_2)/h_2(\mathbb{Q}(i))^2\\
  	&=\frac{1}{4}Q(L_d/\mathbb{Q}(i))h_2(K_1)h_2(K_2).
  	\end{array}$$
  	Using Lemma \ref{lm de parry}, we get $8$ and $4$  divide  $h_2(K_1)$ and   $h_2(K_2)$ respectively. Hence, $8$ divides $h_2(L_{d})$ and so $\mathrm{Cl}_2(L_d)$ is not elementary by Theorem \ref{thm 2-rank = 2}.
  \end{proof} 
  
  \begin{prop}\label{5}
  	Let $d=q_1q_2$  where $q_1$ and $q_2$ are two primes such that $q_1\equiv q_2\equiv 3 \pmod 8$. Then $h_2(L_d)\equiv 0\pmod 8$ and  $\mathrm{Cl}_2(L_d)\not=(2,2)$.
  \end{prop}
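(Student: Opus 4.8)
The plan is to follow the computation in the proof of Proposition~\ref{thm d=p equiv 1 mod 8, when clL=(2,2)} rather than that of Proposition~\ref{4}: here the $2$-ranks furnished by Lemma~\ref{lm de parry} for the subfields of $L_d/\mathbb{Q}(i)$ are too small to force a factor $8$ directly, so I would instead work in the CM-biquadratic tower $L_d/\mathbb{Q}(\sqrt2)$, where the imaginary subfield $K'=\mathbb{Q}(\sqrt2,\sqrt{-d})$ carries a large $2$-class number. Writing $L_d^+=\mathbb{Q}(\sqrt2,\sqrt d)$, $K=\mathbb{Q}(\sqrt2,i)$ and $K'=\mathbb{Q}(\sqrt2,\sqrt{-d})$ for the three quadratic sub-extensions, I would apply Lemma~\ref{lm Kurodas formula} together with $h(\mathbb{Q}(\sqrt2))=h(K)=1$, $\omega_{L_d}=\omega_K=8$, $\omega_{K'}=2$, $Q_K=1$ (Lemma~\ref{lm unite zeta8}) and $Q_{K'}=q(K')=1$ (which, exactly as in Proposition~\ref{thm d=p equiv 1 mod 8, when clL=(2,2)}, follow from $N(\varepsilon_2)=-1$ and $E_{K'}=\langle-1,\varepsilon_2\rangle$). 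Feeding in Wada's formula (Proposition~\ref{wada's f.}), namely $h_2(K')=\tfrac12 h_2(-d)h_2(-2d)$, this gives the analogue of equation~\eqref{eq 3}:
$$h_2(L_d)=\frac{Q_{L_d}}{4}\,h_2(L_d^+)\,h_2(-d)\,h_2(-2d).$$

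The second step is to evaluate the two imaginary quadratic $2$-class numbers by genus theory. Since $q_1\equiv q_2\equiv3\pmod8$, both $-q_1q_2$ and $-2q_1q_2$ ramify exactly at $2,q_1,q_2$, so $\mathbb{Q}(\sqrt{-q_1q_2})$ and $\mathbb{Q}(\sqrt{-2q_1q_2})$ each have $2$-class rank $2$. The decisive quantity is the $4$-rank, which I would read off from R\'edei's criterion using the congruence-forced symbols $\left(\tfrac{-1}{q_i}\right)=\left(\tfrac{2}{q_i}\right)=-1$. A short computation of the two R\'edei matrices then shows that the matrix for $\mathbb{Q}(\sqrt{-q_1q_2})$ has rank $2$ (so $h_2(-q_1q_2)=4$), while the one for $\mathbb{Q}(\sqrt{-2q_1q_2})$ has rank $1$ (so its $4$-rank is $1$ and $8\mid h_2(-2q_1q_2)$). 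This last divisibility is the crux, and it is exactly here that the hypothesis $q_i\equiv3\pmod8$ — rather than merely $q_i\equiv3\pmod4$ — is used, through $\left(\tfrac{2}{q_i}\right)=-1$.

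Combining the two steps, $h_2(-d)=4$ and $h_2(-2d)\ge8$ give $h_2(-d)h_2(-2d)\ge32$, and since $Q_{L_d}\ge1$ and $h_2(L_d^+)\ge1$ the displayed formula yields $h_2(L_d)\ge\tfrac14\cdot32=8$, i.e. $8\mid h_2(L_d)$. Finally, Theorem~\ref{thm div con 3}(1) gives $r_2(d)=2r-2=2$ (as $r=2$), so $\mathrm{Cl}_2(L_d)$ has rank $2$; as $h_2(L_d)\ge8>4$, it cannot be elementary, whence $\mathrm{Cl}_2(L_d)\neq(2,2)$.

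The two places I expect to need care are the unit indices entering the class-number formula — verifying $Q_{K'}=q(K')=1$ so that no spurious power of $2$ is lost, while noting that only $Q_{L_d}\ge1$ is actually required for the lower bound — and the R\'edei computation of the $4$-rank of $\mathbb{Q}(\sqrt{-2q_1q_2})$, which is the real content of the argument. Everything else is a transcription of the proof of Proposition~\ref{thm d=p equiv 1 mod 8, when clL=(2,2)}.
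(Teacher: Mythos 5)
Your argument is correct, but it takes a genuinely different route from the paper's. The paper applies Kuroda's class number formula to $L_d/\mathbb{Q}(i)$, obtaining $h_2(L_d)=\frac{1}{4}Q(L_d/\mathbb{Q}(i))h_2(k_1)h_2(k_2)$ with $k_1=\mathbb{Q}(\sqrt{q_1q_2},i)$ and $k_2=\mathbb{Q}(\sqrt{2q_1q_2},i)$; it then gets $2\mid h_2(k_1)$ from Lemma \ref{lm de parry} and, crucially, $16\mid h_2(k_2)$ by combining the rank statement of Lemma \ref{lm de parry} with the classification results of \cite{azizi99, azizitaous(24)(222)}, which exclude the types $(2,2)$ and $(2,4)$ for $k_2$. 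You instead work over $\mathbb{Q}(\sqrt2)$, exactly as in the proof of Proposition \ref{thm d=p equiv 1 mod 8, when clL=(2,2)}, and reduce everything to the imaginary quadratic fields $\mathbb{Q}(\sqrt{-d})$ and $\mathbb{Q}(\sqrt{-2d})$ via
$$h_2(L_d)=\frac{Q_{L_d}}{4}\,h_2(L_d^+)\,h_2(-d)\,h_2(-2d),$$
whose derivation (including $Q_{K'}=q(K')=1$ from $N(\varepsilon_2)=-1$) is a faithful transcription of the paper's own computation for $d=p$. Your R\'edei computations check out: for $q_1\equiv q_2\equiv3\pmod 8$ the matrix attached to $-4q_1q_2$ has $\mathbb{F}_2$-rank $2$ (so $4$-rank $0$ and $h_2(-d)=4$), while for $-8q_1q_2$ the column indexed by the prime discriminant $-8$ vanishes because $\left(\frac{-2}{q_i}\right)=+1$, forcing rank $1$, hence $4$-rank $1$ and $8\mid h_2(-2d)$; this is indeed exactly where $q_i\equiv 3\pmod 8$ rather than merely $3\pmod 4$ is used. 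The trade-off: the paper's proof is shorter but leans on external classification theorems for the fields $\mathbb{Q}(\sqrt{m},i)$, whereas yours is self-contained modulo classical genus/R\'edei theory and runs parallel to the method the paper already uses for Propositions \ref{thm d=p equiv 1 mod 8, when clL=(2,2)} and \ref{prop d=pq  cl=(2,2)}. One small remark: once $8\mid h_2(L_d)$ is established, $\mathrm{Cl}_2(L_d)\neq(2,2)$ already follows from the order alone, so your appeal to Theorem \ref{thm div con 3} for the rank, while correct, is not needed.
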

  \begin{proof}
  	By  Kuroda's class number formula (see   \cite{lemmermeyer1994kuroda}), we have
  	$$\begin{array}{ll}
  	h_2(L_d)=\frac{1}{4}Q(L_d/\mathbb{Q}(i))h_2(k_1)h_2(k_2),
  	\end{array}$$
  	where $k_1=\mathbb Q(\sqrt{q_1q_2}, i)$ and $k_2=\mathbb Q(\sqrt{2q_1q_2}, i)$. Note that $h_2(k_1)$ is divisible by $2$ (see Lemma \ref{lm de parry}). On the other hand    by Lemma \ref{lm de parry} the rank of the $2$-class group of $k_2$ is $2$ and 
  	by \cite{azizi99, azizitaous(24)(222)} its $2$-class group is not of type $(2,2)$ or $(2,4)$, so 
  	  $h_2(k_2)$ is divisible by $16$.
  	   %(see Lemma \ref{lm de parry} and \cite{azizi99, azizitaous(24)(222)}).
  	     Hence  $h_2(L_d)$ is divisible by $8$. So the result.
  \end{proof}

  To continue, we need   the following two lemmas.

  \begin{lm}\label{corr indices du thm pq}
  	Let $d=pq$ with $p\equiv 5\pmod 8$ and  $q\equiv 7\pmod 8$ are  primes.  Then $\{\varepsilon_2, \varepsilon_{pq}, \sqrt{\varepsilon_{pq}\varepsilon_{2pq}}\}$ is a fundamental system of units of both $L_d$ and $L_d^+=\mathbb{Q}(\sqrt{pq}, \sqrt2)$. Moreover,
  	\begin{enumerate}[\rm 1.]
  		\item $E_{L_d}=\langle\zeta_8, \varepsilon_2, \varepsilon_{pq}, \sqrt{\varepsilon_{pq}\varepsilon_{2pq}}\rangle.$
  		\item  $Q_{L_{d}}=1$ and $q(L_d)=4$.
  	\end{enumerate}
  \end{lm}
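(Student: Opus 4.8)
The goal is to pin down the full unit group of $L_d$ and the two indices $Q_{L_d}$ and $q(L_d)$ for $d=pq$ with $p\equiv 5\pmod 8$, $q\equiv 7\pmod 8$. The plan is to work first with the maximal real subfield $L_d^+=\mathbb{Q}(\sqrt 2,\sqrt{pq})$, which is a real biquadratic field with quadratic subfields $\mathbb{Q}(\sqrt2)$, $\mathbb{Q}(\sqrt{pq})$ and $\mathbb{Q}(\sqrt{2pq})$. First I would record the fundamental units of these three subfields, namely $\varepsilon_2$, $\varepsilon_{pq}$, $\varepsilon_{2pq}$, and then determine the unit index $q(L_d^+)=(E_{L_d^+}:\langle -1,\varepsilon_2,\varepsilon_{pq},\varepsilon_{2pq}\rangle)$. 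The standard result of Kubota for real biquadratic fields says this index is a power of $2$ dividing $8$, and is governed by which square roots of products of the $\varepsilon_i$ lie in $L_d^+$. The claim $\{\varepsilon_2,\varepsilon_{pq},\sqrt{\varepsilon_{pq}\varepsilon_{2pq}}\}$ being a fundamental system amounts to showing that exactly $\sqrt{\varepsilon_{pq}\varepsilon_{2pq}}$ (and none of the other candidate square roots) belongs to $L_d^+$, giving $q(L_d^+)=2$.

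\textbf{Norm conditions forcing the square root.}
The key step is a norm/sign analysis of the fundamental units. Because $q\equiv 7\equiv 3\pmod 4$ and $p\equiv 5\equiv 1\pmod 4$, the unit $\varepsilon_{pq}$ of $\mathbb{Q}(\sqrt{pq})$ (with $pq\equiv 35\equiv 3\pmod 4$) has norm $+1$, and similarly I would check the norm of $\varepsilon_{2pq}$; $\varepsilon_2$ has norm $-1$. One then examines which of the products $\varepsilon_2\varepsilon_{pq}$, $\varepsilon_2\varepsilon_{2pq}$, $\varepsilon_{pq}\varepsilon_{2pq}$, $\varepsilon_2\varepsilon_{pq}\varepsilon_{2pq}$ can be a square in $L_d^+$, using that a product can be a square only when the norms are compatible (both factors must be totally positive, equivalently have norm $+1$ with the right sign behavior). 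The norm-$+1$ units among our three are exactly $\varepsilon_{pq}$ and $\varepsilon_{2pq}$, so the only viable candidate is $\sqrt{\varepsilon_{pq}\varepsilon_{2pq}}$; showing this square root actually lies in $L_d^+$ is the decisive point. I expect this to be the main obstacle: one must produce the element explicitly or invoke a criterion (such as the one in the cited work on $Q_{L_p}=1$, e.g.\ \cite{taous2008}) guaranteeing $\varepsilon_{pq}\varepsilon_{2pq}$ is a square in $L_d^+$ under the given congruences. This gives $q(L_d^+)=2$ and the fundamental system for $L_d^+$.

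\textbf{Passing from $L_d^+$ to $L_d$ and computing $Q_{L_d}$, $q(L_d)$.}
Since $L_d=L_d^+(i)$ is CM with maximal real subfield $L_d^+$ and $W_{L_d}=\langle\zeta_8\rangle$, the Hasse index $Q_{L_d}=(E_{L_d}:W_{L_d}E_{L_d^+})\in\{1,2\}$. To conclude $Q_{L_d}=1$ I would show that every unit of $L_d$ is, up to a root of unity, already real, i.e.\ $E_{L_d}=\langle\zeta_8\rangle E_{L_d^+}$; the cited result \cite{taous2008} (Théorème 3), quoted earlier for $Q_{L_p}=1$, should extend to this situation, or one checks directly that no unit has the form (root of unity)$\times$(real unit)$^{1/2}$ outside the listed generators. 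Granting $Q_{L_d}=1$, the fundamental system of $L_d^+$ together with $\zeta_8$ generates $E_{L_d}$, giving item (1), $E_{L_d}=\langle\zeta_8,\varepsilon_2,\varepsilon_{pq},\sqrt{\varepsilon_{pq}\varepsilon_{2pq}}\rangle$. Finally, for $q(L_d)=(E_{L_d}:\prod_i E_{k_i})$ over the seven quadratic subfields $k_i$ of $L_d$, I would combine $Q_{L_d}=1$ with the real index $q(L_d^+)=2$ and the contribution of the imaginary quadratic subfields through the roots of unity; tracking the factor of $2$ coming from $\sqrt{\varepsilon_{pq}\varepsilon_{2pq}}$ and the factor from $\zeta_8$ relative to $i$ and $\sqrt2$ should yield $q(L_d)=4$. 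The bookkeeping of these $2$-power indices, rather than any deep structural fact, is where care is required.
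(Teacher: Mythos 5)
There is a genuine gap at exactly the point you yourself flag as "the decisive point" and "the main obstacle": you never prove that $\sqrt{\varepsilon_{pq}\varepsilon_{2pq}}$ actually lies in $L_d^+$, nor that no further square roots of unit products appear when one passes from $L_d^+$ to $L_d$. Your norm/sign analysis only gives the \emph{negative} half of the argument (total positivity is necessary, so products involving $\varepsilon_2$ to an odd power are excluded since $N(\varepsilon_2)=-1$); being totally positive is not sufficient for a unit to be a square, so the candidate $\varepsilon_{pq}\varepsilon_{2pq}$ could a priori fail to be a square, in which case $q(L_d^+)=1$ and the claimed fundamental system would be wrong. Likewise, your appeal to \cite[Théorème 3]{taous2008} for $Q_{L_d}=1$ does not apply: that result concerns $\mathbb{Q}(\sqrt{2p},i)$, a different field, and "should extend" is not an argument.

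The paper closes precisely this gap by an explicit arithmetic computation. Writing $\varepsilon_{pq}=a+b\sqrt{pq}$ and $\varepsilon_{2pq}=x+y\sqrt{2pq}$, both of norm $+1$, one gets $a^2-1=pqb^2$ and $x^2-1=2pqy^2$; unique factorization in $\mathbb{Z}$ forces one of $a\pm1$, $p(a\pm1)$, $2p(a\pm1)$ (and similarly for $x$) to be a square, and the cases where $a\pm1$ or $x\pm1$ is itself a square are eliminated because they would yield $\left(\frac{2}{p}\right)=1$, contradicting $p\equiv 5\pmod 8$. This decomposition data is exactly the hypothesis of \cite[Proposition 3.3]{AZT2016}, which then delivers in one stroke that $\{\varepsilon_2,\varepsilon_{pq},\sqrt{\varepsilon_{pq}\varepsilon_{2pq}}\}$ is a fundamental system for \emph{both} $L_d^+$ and $L_d$; the equality $E_{L_d}=\langle\zeta_8\rangle E_{L_d^+}$, hence $Q_{L_d}=1$, and the index $q(L_d)=4$ (from $\prod_i E_{k_i}=\langle i,\varepsilon_2,\varepsilon_{pq},\varepsilon_{2pq}\rangle$) then follow immediately. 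To repair your proof you would need to supply this norm-equation factorization step, or an equivalent explicit criterion, rather than deferring it.
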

  \begin{proof}
  	Let $\varepsilon_{2pq}=x+y\sqrt{2pq}$ and $\varepsilon_{pq}=a+b\sqrt{pq}$ be the fundamental units of $\mathbb{Q}(\sqrt{2pq})$ and $\mathbb{Q}(\sqrt{pq})$ respectively. It is well known that  $N(\varepsilon_{2pq})=N(\varepsilon_{pq})=1$. Then $a^2-1=b^2pq$ and $x^2-1=2y^2pq$. Hence the unique prime factorization in $\ZZ$ implies that one of the numbers: $x\pm1$ (resp. $(a\pm1$)),  $p(x\pm1)$ (resp. $p(a\pm1))$ and $2p(x\pm1)$ (resp. $2p(a\pm1)$) is  a square in $\NN$.  We claim that $x+1$, $a+1$, $x-1$ and  $a-1$ are not squares in $\NN$, otherwise we get for
  	$b=b_1b_2$ and $y=y_1y_2$
  	$$\left\{ \begin{array}{ccc}
  	a\pm1&=&b_1^2\\
  	a\mp1&=&pqb_2^2
  	\end{array}\right. \text{ and }
  	\left\{ \begin{array}{ccc}
  	x\pm1&=&y_1^2\\
  	x\mp1&=&2pqy_2^2.
  	\end{array}\right. $$
  	Hence $1=\left(\frac{b_1^2}{p}\right)=\left(\frac{a\pm1}{p}\right)=\left(\frac{a\mp1\pm 2}{p}\right)=\left(\frac{\pm 2}{p}\right)=\left(\frac{ 2}{p}\right)=-1$ and
  	$1=\left(\frac{y_1^2}{p}\right)=\left(\frac{x\pm1}{p}\right)=\left(\frac{x\mp1\pm 2}{p}\right)=\left(\frac{\pm 2}{p}\right)=\left(\frac{ 2}{p}\right)=-1,$
  	which is absurd.
  	Thus  by \cite[Proposition 3.3]{AZT2016} $\{\varepsilon_2, \varepsilon_{pq}, \sqrt{\varepsilon_{pq}\varepsilon_{2pq}}\}$ is the fundamental system of units of both $L_d$ and $L_d^+$.  So $E_{L_d}=\langle\zeta_8, \varepsilon_2, \varepsilon_{pq}, \sqrt{\varepsilon_{pq}\varepsilon_{2pq}}\rangle$ and $Q_{L_{d}}=1$. As  $\prod_{i=1}^{7}E_{k_i}=\langle i, \varepsilon_2, \varepsilon_{pq}, \varepsilon_{2pq}\rangle$, then $q(L_{d})=4$.
  \end{proof}
  \begin{lm}\label{corr indices du thm q1q2}
  	Let $d=q_1q_2$ with  $q_1\equiv 3\pmod 8$ and $q_2\equiv 7\pmod 8$  are primes. Then $\{\varepsilon_2, \varepsilon_{q_1q_2}, \sqrt{\varepsilon_{q_1q_2}\varepsilon_{2q_1q_2}}\}$ is a fundamental system of units of both $L_d$ and $L_d^+=\mathbb{Q}(\sqrt{q_1q_2}, \sqrt2)$. Moreover,
  	\begin{enumerate}[\rm 1.]
  		\item $E_{L_d}=\langle\zeta_8, \varepsilon_2, \varepsilon_{q_1q_2}, \sqrt{\varepsilon_{q_1q_2}\varepsilon_{2q_1q_2}}\rangle.$
  		\item $Q_{L_{d}}=1$ and $q(L_d)=4$.
  	\end{enumerate}
  \end{lm}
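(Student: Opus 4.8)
The plan is to follow the proof of Lemma~\ref{corr indices du thm pq} almost verbatim, the present statement being its exact analogue with the pair $(q_1\equiv 3,\ q_2\equiv 7)\pmod 8$ in place of $(p\equiv 5,\ q\equiv 7)\pmod 8$. Two features force the argument to deviate from the model lemma, and these are where I would concentrate: the residue computation can no longer be run with a single prime, and the field $\mathbb{Q}(\sqrt{q_1q_2})$ now falls in the case $q_1q_2\equiv 1\pmod 4$.

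First I would record that $N(\varepsilon_{q_1q_2})=N(\varepsilon_{2q_1q_2})=1$. Indeed, since $q_1\equiv 3\pmod 4$, $-1$ is a non-residue modulo $q_1$, so neither negative Pell equation $X^2-q_1q_2Y^2=-1$ nor $X^2-2q_1q_2Y^2=-1$ is solvable. Writing $\varepsilon_{2q_1q_2}=x+y\sqrt{2q_1q_2}$ then gives the clean identity $x^2-1=2y^2q_1q_2$, whereas for $\varepsilon_{q_1q_2}$ the possible half-integrality of the coordinates means that the correct identity is $a^2-q_1q_2b^2=4$, that is $(a-2)(a+2)=q_1q_2b^2$.

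Next, as in the model lemma, unique factorization in $\mathbb{Z}$ together with the coprimality of the two factors on each side forces a short list of possible factorizations, and I would rule out all but the ``split'' ones. The novelty is that the two relevant non-residues are now separated between the two primes: for $q_1\equiv 3\pmod 8$ one has $\left(\frac{2}{q_1}\right)=-1$ and $\left(\frac{-1}{q_1}\right)=-1$, while for $q_2\equiv 7\pmod 8$ one has $\left(\frac{-2}{q_2}\right)=-1$ and $\left(\frac{2}{q_2}\right)=+1$. Thus for $\varepsilon_{2q_1q_2}$ the alternative $x+1=\square$ is killed by reducing modulo $q_1$ (it would force $\left(\frac{2}{q_1}\right)=1$) and $x-1=\square$ by reducing modulo $q_2$ (it would force $\left(\frac{-2}{q_2}\right)=1$); the corresponding alternatives for the half-integral $\varepsilon_{q_1q_2}$ are handled by the same reductions applied to $a\mp 2$, which would yield $\left(\frac{-4}{q_1}\right)=1$, while the single residue-insensitive case ($a+2$ a square) is excluded because it would exhibit $\varepsilon_{q_1q_2}$ as the square of a unit, contradicting its being fundamental. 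What survives is exactly the configuration under which \cite[Proposition 3.3]{AZT2016} guarantees that $\{\varepsilon_2,\varepsilon_{q_1q_2},\sqrt{\varepsilon_{q_1q_2}\varepsilon_{2q_1q_2}}\}$ is a fundamental system of units of both $L_d$ and $L_d^+$.

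The index computations then follow the model lemma unchanged. From the fundamental system one reads off $E_{L_d}=\langle\zeta_8,\varepsilon_2,\varepsilon_{q_1q_2},\sqrt{\varepsilon_{q_1q_2}\varepsilon_{2q_1q_2}}\rangle$; since $L_d$ and $L_d^+$ share this system, $E_{L_d}=W_{L_d}E_{L_d^+}$ and so $Q_{L_d}=1$. Comparing with $\prod_{i=1}^{7}E_{k_i}=\langle i,\varepsilon_2,\varepsilon_{q_1q_2},\varepsilon_{2q_1q_2}\rangle$, the index picks up one factor $2$ from $\zeta_8^2=i$ and another from $(\sqrt{\varepsilon_{q_1q_2}\varepsilon_{2q_1q_2}})^2=\varepsilon_{q_1q_2}\varepsilon_{2q_1q_2}$, giving $q(L_d)=4$. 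I expect the main obstacle to be the careful bookkeeping of the factorization cases in the half-integral setting $q_1q_2\equiv 1\pmod 4$: one must keep straight which exclusions are residue-theoretic (and therefore require splitting the work between $q_1$ and $q_2$) and which instead follow from the absence of units of norm $-1$ and from the fundamentality of $\varepsilon_{q_1q_2}$.
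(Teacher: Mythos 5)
Your proposal is correct and follows exactly the route the paper intends: the paper's entire proof of this lemma is the single sentence ``We proceed as in the proof of Lemma \ref{corr indices du thm pq},'' i.e.\ rule out the square factorizations of $x\pm1$ (resp.\ the analogous quantities for $\varepsilon_{q_1q_2}$) via quadratic residues at $q_1$ and $q_2$, invoke \cite[Proposition 3.3]{AZT2016}, and read off $Q_{L_d}=1$ and $q(L_d)=4$ from the resulting fundamental system. Your explicit treatment of the half\nobreakdash-integral case forced by $q_1q_2\equiv 5\pmod 8$ (working with $a^2-q_1q_2b^2=4$ and excluding $a+2=\square$ via the fundamentality of $\varepsilon_{q_1q_2}$) is a genuine point of care that the paper's one-line proof leaves implicit, and it is handled correctly.
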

  \begin{proof}
  	We proceed as in the proof of Lemma \ref{corr indices du thm pq}.
  \end{proof}

  \begin{prop}\label{prop d=pq  cl=(2,2)}
  	Let $d=pq$  where $p$ and   $q$ are two primes such that $p\equiv 5 \pmod 8$ and $q\equiv 7 \pmod 8$. Then
  	$$Cl_2(L_{d})=(2,2) \text{ if and only if }   \left(\frac{p}{q}\right)=-1,$$
  	otherwise, $h_2(L_{d})\equiv0\pmod{16}$.
  \end{prop}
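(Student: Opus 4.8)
The plan is to reduce the statement to a computation of the $2$-class number $h_2(L_d)$. Since the prime divisors of $d=pq$ lie in the classes $5$ and $7\pmod 8$, item $4$ of Theorem \ref{thm 2-rank = 2} already gives $r_2(d)=2$; hence $\mathrm{Cl}_2(L_d)$ is of type $(2,2)$ precisely when $h_2(L_d)=4$, and otherwise, its rank being $2$, its order is at least $8$. So everything comes down to showing that $h_2(L_d)=4$ exactly when $\left(\frac{p}{q}\right)=-1$, and that $16\mid h_2(L_d)$ when $\left(\frac{p}{q}\right)=1$.

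First I would compute $h_2(L_d)$ by a class number formula, using the unit data already secured in Lemma \ref{corr indices du thm pq}. The most direct route is Wada's formula (Proposition \ref{wada's f.}) applied to the octic field $L_d$: here $n=3$, so the exponent is $v=(n-1)(2^{n-2}-1)+2^{n-1}-1=5$, while $q(L_d)=4$ by Lemma \ref{corr indices du thm pq}. Since the quadratic subfields $\mathbb{Q}(i),\mathbb{Q}(\sqrt2),\mathbb{Q}(\sqrt{-2})$ have odd class number, passing to $2$-parts gives
$$h_2(L_d)=\tfrac{1}{8}\,h_2(pq)\,h_2(-pq)\,h_2(2pq)\,h_2(-2pq).$$
(Equivalently one may run Kuroda's formula over $\mathbb{Q}(\sqrt2)$, as in Proposition \ref{thm d=p equiv 1 mod 8, when clL=(2,2)}, or over $\mathbb{Q}(i)$, as in Propositions \ref{4} and \ref{5}; once the unit indices of Lemma \ref{corr indices du thm pq} are inserted, all routes produce the same arithmetic.) Thus the problem is transferred to the four genuinely quadratic $2$-class numbers.

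The heart of the argument is then to evaluate these four factors by genus theory and Rédei's theory of the $4$-rank. Each of the discriminants attached to $\pm pq,\pm 2pq$ is divisible by the two odd primes $p\equiv5$ and $q\equiv7\pmod8$, so each $2$-rank is governed by the number of ramified primes, while the transition from $\left(\frac{p}{q}\right)=1$ to $-1$ flips the relevant Rédei symbols simultaneously; note here that $\left(\frac{p}{q}\right)=\left(\frac{q}{p}\right)$ because $p\equiv1\pmod4$, and that $\left(\frac{-1}{q}\right)=-1$, $\left(\frac{2}{p}\right)=-1$, $\left(\frac{2}{q}\right)=1$. The cleanest link is the imaginary field $\mathbb{Q}(\sqrt{-pq})$: since $pq\equiv3\pmod4$ its discriminant $-pq\equiv1\pmod4$ has exactly the two ramified primes $p,q$, so its $2$-class group is cyclic, and its $2\times2$ Rédei matrix has rank $1$ — whence $4$-rank $0$ and $h_2(-pq)=2$ — exactly when $\left(\frac{p}{q}\right)=-1$; when $\left(\frac{p}{q}\right)=1$ the matrix vanishes and $4\mid h_2(-pq)$. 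Feeding the analogous evaluations of $h_2(pq),h_2(2pq),h_2(-2pq)$ into the displayed formula yields $h_2(L_d)=4$ in the first case and $16\mid h_2(L_d)$ in the second.

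The main obstacle is the bookkeeping of these four quadratic $2$-class numbers: one must compute each Rédei matrix precisely enough to fix the \emph{exact} power of $2$, not merely a lower bound, and verify that the contributions combine so that the single condition $\left(\frac{p}{q}\right)=-1$ cleanly separates the value $4$ from the values divisible by $16$. Controlling the real biquadratic factor would be the delicate point if one instead works through $L_d^+=\mathbb{Q}(\sqrt2,\sqrt{pq})$: there the unit $\sqrt{\varepsilon_{pq}\varepsilon_{2pq}}$ of Lemma \ref{corr indices du thm pq} must be used to pin down $h_2(L_d^+)$ together with the Hasse index $Q_{K'}$ of $K'=\mathbb{Q}(\sqrt2,\sqrt{-pq})$, exactly as the negative norm of $\varepsilon_2$ was exploited in Proposition \ref{thm d=p equiv 1 mod 8, when clL=(2,2)}.
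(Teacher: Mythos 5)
Your proposal follows essentially the same route as the paper: Wada's formula with $q(L_d)=4$ from Lemma \ref{corr indices du thm pq} gives exactly the paper's identity $h_2(L_d)=\frac{1}{8}h_2(pq)h_2(-pq)h_2(2pq)h_2(-2pq)$, and the conclusion is then read off from the four quadratic $2$-class numbers together with the rank-$2$ statement of Theorem \ref{thm 2-rank = 2}. The only difference is one of packaging: where you propose to evaluate the four factors via R\'edei matrices, the paper simply cites Conner--Hurrelbrink and Kaplan for $h_2(pq)=h_2(2pq)=2$ in both cases, $h_2(-pq)=2$ and $h_2(-2pq)=4$ when $\left(\frac{p}{q}\right)=-1$, and $4\mid h_2(-pq)$, $8\mid h_2(-2pq)$ when $\left(\frac{p}{q}\right)=1$ --- the same arithmetic you outline and correctly identify as the remaining bookkeeping.
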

%  \begin{proof}
%  	Suppose that $\left(\frac{p}{q}\right)=-1$. By Lemma \ref{wada's f.},   we have
%  	$$\begin{array}{ll}
%  	h_2(L_{d})&=\frac{1}{2^5}q(L_{d})h_2(pq) h_2(-pq)h_2(2pq)h_2(-2pq)h_2(2)h_2(-2)h_2(-1)\\
%%  	&=\frac{1}{2^5}q(L_{d})h_2(pq) h_2(-pq)h_2(2pq)h_2(-2pq)\\
%  	&=\frac{1}{2^5}q(L_{d}).2. 2.2.4  \;\;\;\; (\text{see \cite{kaplan76, connor88}})\\
%  	&=q(L_{d}).
%  	\end{array}$$
 % 	Hence, Lemma \ref{corr indices du thm pq} implies  that $h_2(L_{d})=q(L_{d})=4$. For the converse, assume that $\left(\frac{p}{q}\right)=1$. Since $q(L_d)=4$, then  Lemma \ref{wada's f.}  and \cite{kaplan76}  imply %that
 % 	$$\begin{array}{ll}
%  	h_2(L_{d})&=\frac{1}{2^5}q(L_{d})h_2(pq) h_2(-pq)h_2(2pq)h_2(-pq)h_2(2)h_2(-2)h_2(-1)\\
%  	&=\frac{1}{2}h_2(-pq)h_2(-2pq).
%  	\end{array}$$
%  	Since $h_2(-pq)$ (resp. $h_2(-2pq)$) is divisible by $4$ (resp. $8$), then $h_2(L_{d})$ is divisible by $16$. So the result.
%  \end{proof}
  \begin{proof}
  	By Lemma \ref{corr indices du thm pq}  we have $ q(L_{d})=4$.	It follows by Proposition \ref{wada's f.} that we have
  	$$\begin{array}{ll}
  	h_2(L_{d})&=\frac{1}{2^5}q(L_{d})h_2(pq) h_2(-pq)h_2(2pq)h_2(-2pq)h_2(2)h_2(-2)h_2(-1)\\
  	&=\frac{1}{2^3}h_2(pq) h_2(-pq)h_2(2pq)h_2(-2pq).
  	\end{array}$$
  	\begin{enumerate}[\rm $\bullet$]
  		\item Assume that $\left(\frac{p}{q}\right)=-1$, then by   \cite[Corollaries 19.6 and 19.7]{connor88}      $ h_2(pq)= h_2(-pq)=h_2(2pq)=2$ and by \cite[p. 353]{kaplan76} $h_2(-2pq)=4$.
  	Therefore the above equation gives
  	$$\begin{array}{ll}
  	h_2(L_{d})&= \frac{1}{2^5}q(L_{d})h_2(pq) h_2(-pq)h_2(2pq)h_2(-2pq)\\
  	&=\frac{1}{2^3}\cdot 2\cdot 2\cdot 2\cdot 4=4. 
  	\end{array}$$
  	
  \item Assume that $\left(\frac{p}{q}\right)=1$. By  \cite[Corollary 19.7]{connor88}  we have $ h_2(pq)= h_2(2pq)=2$.
  	Then as above we have
  	$$\begin{array}{ll}
  	h_2(L_{d})&= \frac{1}{2}h_2(-pq)h_2(-2pq).
  	\end{array}$$
  	By Proposition $B_{10}'$ of  \cite[p. 353]{kaplan76},   $h_2(-2pq)$ is divisible by $8$. By  \cite[Corollaries 19.6 and 18.4 ]{connor88} $h_2(-pq)$ is divisible by    $4$. Therefore in this case $h_2(L_{d})$
  	is divisible by $16$.
  	  	\end{enumerate}
  	 Hence Theorem  \ref{thm 2-rank = 2} completes the proof.
  \end{proof}

  \begin{prop}\label{prop d=q1q2  cl=(2,2)}
  	Let $d=q_1q_2$   where $q_1$, $q_2$ are two primes such that  $q_1\equiv 3 \pmod 8$ and $q_2\equiv 7 \pmod 8$. Then
  	$$\mathrm{Cl}_2(L_{d})=(2, 2) \text{ if and only if }\left(\frac{q_1}{q_2}\right)=-1,$$
  	otherwise, $h_2(L_{d})$ is divisible by $16$.
  \end{prop}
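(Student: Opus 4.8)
The plan is to mirror the proof of Proposition \ref{prop d=pq  cl=(2,2)} almost verbatim, the only genuine change being that the prime $\equiv 5\pmod 8$ is replaced by one $\equiv 3\pmod 8$, which shifts the individual $2$-class numbers of the quadratic subfields. First I would invoke Lemma \ref{corr indices du thm q1q2}, which gives $q(L_d)=4$, and feed this into Wada's formula (Proposition \ref{wada's f.}) applied to the degree-$8$ field $L_d$. Its seven quadratic subfields are $\mathbb{Q}(\sqrt2)$, $\mathbb{Q}(i)$, $\mathbb{Q}(\sqrt{-2})$, $\mathbb{Q}(\sqrt{q_1q_2})$, $\mathbb{Q}(\sqrt{-q_1q_2})$, $\mathbb{Q}(\sqrt{2q_1q_2})$ and $\mathbb{Q}(\sqrt{-2q_1q_2})$; since $h_2(2)=h_2(-1)=h_2(-2)=1$ and the imaginary exponent in Wada's formula is $v=5$, this collapses to
\begin{equation*}
h_2(L_d)=\frac{1}{2^3}\,h_2(q_1q_2)\,h_2(-q_1q_2)\,h_2(2q_1q_2)\,h_2(-2q_1q_2).
\end{equation*}
Everything then reduces to evaluating the four remaining $2$-class numbers.

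Next I would split on the value of $\left(\frac{q_1}{q_2}\right)$. When $\left(\frac{q_1}{q_2}\right)=-1$, I expect the tables of \cite{connor88} (Corollaries~19.6, 19.7, 18.4) together with \cite[p.~353]{kaplan76} to yield $h_2(q_1q_2)=1$, $h_2(2q_1q_2)=2$, $h_2(-q_1q_2)=4$ and $h_2(-2q_1q_2)=4$, so that the product is $2^5$ and $h_2(L_d)=\tfrac{1}{8}\cdot 32=4$. Combined with Theorem \ref{thm 2-rank = 2}, which already guarantees $r_2(d)=2$ for this shape of $d$, this forces $\mathrm{Cl}_2(L_d)\simeq(2,2)$. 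When $\left(\frac{q_1}{q_2}\right)=1$, the same sources should instead give that the two imaginary factors $h_2(-q_1q_2)$ and $h_2(-2q_1q_2)$ are each divisible by $8$, while the real factors $h_2(q_1q_2)$ and $h_2(2q_1q_2)$ stay equal to $1$ and $2$ respectively; hence $h_2(L_d)$ is divisible by $\tfrac{1}{8}\cdot 1\cdot 8\cdot 2\cdot 8=16$, so $\mathrm{Cl}_2(L_d)\neq(2,2)$.

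The main obstacle is the exact determination of these four $2$-class numbers from the residue-symbol data, rather than any structural step. Reducing the congruences $q_1\equiv 3$, $q_2\equiv 7\pmod 8$ via quadratic reciprocity, one finds $\left(\frac{q_1}{q_2}\right)=-\left(\frac{q_2}{q_1}\right)$ and controls $\left(\frac{-1}{q_i}\right)=-1$, $\left(\frac{2}{q_1}\right)=-1$, $\left(\frac{2}{q_2}\right)=1$; the care lies in feeding exactly the right congruence and symbol conditions into the Connor--Robertson and Kaplan classifications, particularly for the two genus-rank-$2$ imaginary fields $\mathbb{Q}(\sqrt{-q_1q_2})$ and $\mathbb{Q}(\sqrt{-2q_1q_2})$, whose $4$-rank is precisely what $\left(\frac{q_1}{q_2}\right)$ governs. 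Once these four values are certified, both implications and the divisibility-by-$16$ claim follow from the single displayed identity above, exactly as in the $p\equiv5$, $q\equiv7$ case.
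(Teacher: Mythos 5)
Your proposal is correct and follows exactly the route the paper intends: the paper's own proof of this proposition is literally ``similar to the one of Proposition \ref{prop d=pq  cl=(2,2)}'', and you have carried out that template faithfully, using Lemma \ref{corr indices du thm q1q2} and Proposition \ref{wada's f.} to reduce to the four $2$-class numbers $h_2(\pm q_1q_2)$, $h_2(\pm 2q_1q_2)$ and then splitting on $\left(\frac{q_1}{q_2}\right)$. The specific values you assert ($1$, $4$, $2$, $4$ when $\left(\frac{q_1}{q_2}\right)=-1$; divisibility of the two imaginary factors by $8$ when $\left(\frac{q_1}{q_2}\right)=1$) are consistent with the cited classifications and with the paper's numerical examples $d=21$, $19\cdot 23$, $19\cdot 31$.
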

  \begin{proof}
  	The proof is similar to the one of Proposition \ref{prop d=pq  cl=(2,2)}.
  \end{proof}

  % \noindent Here are some  numerical examples illustrating our results.
  \begin{exams}The examples are given by using PARI/GP software and they confirm our results.
  	\begin{enumerate}[\rm 1.]
  		\item We have $\left(\frac{2}{17}\right)_4=-\left(\frac{17}{2}\right)_4=-1$ and the 2-class group of $\QQ(\sqrt {17}, \sqrt2, i)$ is of type $(2,2)$.
  		\item  The $2$-class groups of  the fields $\QQ(\sqrt {21}, \sqrt2, i)$ and $\QQ(\sqrt {35}, \sqrt2, i)$ are of type $(2, 2)$.
  		\item The $2$-class group of  the field $\QQ(\sqrt {19\cdot23}, \sqrt2, i)$ is of type $(2, 2)$. Whereas, that of $\QQ(\sqrt {19\cdot31}, \sqrt2, i)$ is not, since the  $2$-part of its class number   is divisible by $16$. In fact $\left(\frac{19}{31}\right)=-\left(\frac{19}{23}\right)=1$.
  	\end{enumerate}
  \end{exams}

  \subsection{The $2$-part of the class number  of some biquadratic fields}\label{sous-section sur 2-partie des corps biquadratiques}
  
  In this subsection we  use the previous results to %compute the $2$-class number of some biquadratic fields. Furthermore,  we 
  give the $2$-class number of $L_d$ in terms  of that of  $\mathbb{Q}(\sqrt{2},\sqrt{-d})$.

  \begin{theorem}\label{thm 4.17}
  	Let $d=pq$ where $p$, $q$ are two primes such that  $p\equiv 5 \pmod 8$  and $q\equiv 7 \pmod 8$. Then
  	$$h_2(L_d)=h_2(k),$$
  	with $k=\mathbb{Q}(\sqrt{2},\sqrt{-d})$. Moreover,
  	\begin{eqnarray*}
  		\mathrm{Cl}_2(k)=(2,2) \text{ if and only if} \left(\frac{p}{q}\right)=-1.
  	\end{eqnarray*}
  \end{theorem}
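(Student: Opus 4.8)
The plan is to compute $h_2(k)$ directly from Wada's formula and match it against the expression for $h_2(L_d)$ already obtained inside the proof of Proposition~\ref{prop d=pq  cl=(2,2)}. Note first that $k=\mathbb{Q}(\sqrt2,\sqrt{-d})\subset L_d$ (indeed $\sqrt{-d}=i\sqrt d$ and $L_d=k(i)$), and that $k$ is an imaginary biquadratic field whose three quadratic subfields are $\mathbb{Q}(\sqrt2)$, $\mathbb{Q}(\sqrt{-pq})$ and $\mathbb{Q}(\sqrt{-2pq})$, with maximal real subfield $k^+=\mathbb{Q}(\sqrt2)$. Applying Proposition~\ref{wada's f.} to $k$ (so $n=2$, whence $v=1$) and using $h_2(2)=1$, I get
$$h_2(k)=\tfrac12\,q(k)\,h_2(-pq)\,h_2(-2pq),$$
so everything reduces to the unit index $q(k)$.

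Next I would show $q(k)=1$. Since $i\notin k$ we have $W_k=\{\pm1\}=W_{k^+}$, so $q(k)=(E_k:\langle-1,\varepsilon_2\rangle)=Q_k\in\{1,2\}$. Because $N(\varepsilon_2)=-1$, the argument used in Proposition~\ref{thm d=p equiv 1 mod 8, when clL=(2,2)} (via the units result of \cite{AZ99}) applies verbatim and yields $E_k=\langle-1,\varepsilon_2\rangle$, hence $q(k)=1$. Self-containedly: if $Q_k=2$ there would be a unit $u$ with $\bar u=-u$; then $u\bar u$ is a totally positive unit of $\mathbb{Q}(\sqrt2)$, hence an even power $\varepsilon_2^{2m}$ (totally positive units are squares since $N(\varepsilon_2)=-1$), and $v=u\varepsilon_2^{-m}$ would satisfy $v^2=-1$, forcing $i\in k$, a contradiction. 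Therefore $h_2(k)=\tfrac12\,h_2(-pq)\,h_2(-2pq)$.

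Now I combine. From the proof of Proposition~\ref{prop d=pq  cl=(2,2)} one has $h_2(L_d)=\tfrac1{2^3}\,h_2(pq)\,h_2(-pq)\,h_2(2pq)\,h_2(-2pq)$, and in both cases of $\left(\frac{p}{q}\right)$ the cited results of Connor give $h_2(pq)=h_2(2pq)=2$. Substituting, $h_2(L_d)=\tfrac18\cdot4\cdot h_2(-pq)\,h_2(-2pq)=\tfrac12\,h_2(-pq)\,h_2(-2pq)=h_2(k)$, which is the first assertion.

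For the ``Moreover'' part I would combine $h_2(L_d)=h_2(k)$ with Proposition~\ref{prop d=pq  cl=(2,2)}. If $\left(\frac{p}{q}\right)=1$ then $h_2(k)=h_2(L_d)\equiv0\pmod{16}$, so $\mathrm{Cl}_2(k)\neq(2,2)$. If $\left(\frac{p}{q}\right)=-1$ then $h_2(k)=4$, and it remains to see that $\mathrm{Cl}_2(k)$ is not cyclic. I apply the ambiguous class number formula of Section~\ref{1} to the quadratic extension $k/\mathbb{Q}(\sqrt2)$: since $p\equiv5$ is inert and $q\equiv7\equiv-1$ splits in $\mathbb{Q}(\sqrt2)$, while $2$ stays unramified in $k/\mathbb{Q}(\sqrt2)$ (because $\mathbb{Q}(\sqrt{-pq})/\mathbb{Q}$ is unramified at $2$), there are $3$ finite and $2$ infinite ramified primes, i.e. $t=5$; moreover $E_{\mathbb{Q}(\sqrt2)}\cap N(k)=E_{\mathbb{Q}(\sqrt2)}^{2}$ (as $-1,\varepsilon_2,-\varepsilon_2$ are negative at a real place and hence not norms), so $e=2$ and $|Am_2(k/\mathbb{Q}(\sqrt2))|=2^{t-1-e}=4=h_2(k)$. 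Thus all of $\mathrm{Cl}_2(k)$ is ambiguous; since $h(\mathbb{Q}(\sqrt2))=1$, it is generated by the classes of the three finite ramified primes, each of order dividing $2$, so $\mathrm{Cl}_2(k)$ is elementary of order $4$, i.e. of type $(2,2)$. The hard part is precisely this last step — ruling out a cyclic group of order $4$ — which rests on the exact count $t=5$ and on the unit index $e=2$.
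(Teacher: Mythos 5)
Your argument is correct, and it reaches both assertions by a route that is genuinely different from the paper's in two places. For the equality $h_2(L_d)=h_2(k)$, the paper applies the CM-type Kuroda formula (Lemma \ref{lm Kurodas formula}) to $L_d/\mathbb{Q}(\sqrt2)$, which gives $h_2(L_d)=\frac{Q_{L_d}}{Q_kQ_K}\cdot\frac12\, h_2(L_d^+)\,h_2(k)$, and then computes $h_2(L_d^+)=2$ via Proposition \ref{wada's f.} and Lemma \ref{corr indices du thm pq} and checks $Q_{L_d}=Q_K=Q_k=1$. You instead expand $h_2(k)$ by Wada's formula (with $q(k)=1$, which you establish by essentially the same unit argument the paper imports from \cite{AZ99}) and match it term by term against the expansion of $h_2(L_d)$ already obtained in the proof of Proposition \ref{prop d=pq  cl=(2,2)}; the two computations agree because both ultimately rest on $h_2(pq)=h_2(2pq)=2$ and on the unit indices $q(L_d)=4$ and $q(k)=1$. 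Your version avoids the Hasse indices and the separate evaluation of $h_2(L_d^+)$, at the cost of reusing the Conner--Hurrelbrink values twice. For the ``moreover'' part, the paper simply quotes \cite[Proposition 2]{mccall1995imaginary} for the fact that $\mathrm{Cl}_2(k)$ has rank $2$, whereas you rederive this from the ambiguous class number formula for $k/\mathbb{Q}(\sqrt2)$; your counts $t=5$ (one prime above $p$ since $p\equiv 5\pmod 8$ is inert in $\mathbb{Q}(\sqrt 2)$, two above $q$ since $q\equiv 7\pmod 8$ splits, $2$ unramified because $-pq\equiv 1\pmod 4$, plus two real places) and $e=2$ are right, and this makes the proof more self-contained.

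One small repair is needed in that last step. Having found $|Am_2(k/\mathbb{Q}(\sqrt2))|=4=h_2(k)$, you assert that the ambiguous classes are generated by the classes of the ramified primes; strictly speaking that describes the \emph{strongly} ambiguous classes, and the quotient of ambiguous by strongly ambiguous classes, isomorphic to $(E_{\mathbb{Q}(\sqrt2)}\cap N(k^*))/N_{k/\mathbb{Q}(\sqrt2)}(E_k)$, still has to be seen to vanish --- which it does, since $E_{\mathbb{Q}(\sqrt2)}\cap N(k^*)=\langle\varepsilon_2^2\rangle=N_{k/\mathbb{Q}(\sqrt2)}(\langle\varepsilon_2\rangle)$. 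Even simpler: since $h(\mathbb{Q}(\sqrt2))$ is odd, $Am_2(k/\mathbb{Q}(\sqrt2))$ coincides with the $2$-torsion subgroup of $\mathrm{Cl}(k)$ exactly as in \S\ \ref{1}, so $|Am_2|=4$ already yields rank $2$ without any appeal to ramified prime classes. With either patch your proof is complete.
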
	
  \begin{proof}
  	Applying Lemma \ref{lm Kurodas formula} to the extension   $L_{d}/\mathbb{Q}(\sqrt{2})$, we get
  	\begin{eqnarray*}
  		h_2(L_d)=\frac{Q_{L_d}}{Q_{k}Q_{K}}\frac{1}{2}
  		h_2( L_{d}^+)h_2(k).
  	\end{eqnarray*}
  	By  Proposition \ref{wada's f.},   Lemma \ref{corr indices du thm pq} and the settings on values of class numbers of quadratic fields given in the proof of Proposition \ref{prop d=pq  cl=(2,2)} we obtain 
  	$$h_2( L_{d}^+)=\frac{1}{4}q(L_{d}^+)h_2(pq)h_2(2pq)h_2(2)=\frac{1}{4}\cdot 2\cdot 2\cdot 2\cdot1=2.$$ 
  	By Lemmas \ref{lm unite zeta8} and   \ref{corr indices du thm pq} (resp.  \cite[p. 19]{AZ99}), we have $Q_{L_d}=Q_{K}=1$ (resp.  $Q_{k}=1$). Thus
  	$h_2(L_d)=	h_2(k).$
  	Since by \cite[Proposition 2]{mccall1995imaginary}, the rank of the $2$-class group of $k$ is $2$, we have the equivalence by  Proposition \ref{prop d=pq  cl=(2,2)}.
  \end{proof}

%  \begin{rema}
%  	With the same assumptions as in the previous theorem, we have
%  	\begin{enumerate}[\rm 1.]
%  		\item $	h_2(k)=4$ if and only if $\left(\frac{p}{q}\right)=-1.$
 % 		\item $h_2(L_d^+)=2$ with $L_d^+=\mathbb{Q}(\sqrt{2},\sqrt{d}).$
%  	\end{enumerate}
 % \end{rema}

  \begin{theorem}
  	Let $d=q_1q_2$ where $q_1$, $q_2$ are two primes such that  $q_1\equiv 3 \pmod 8$  and $q_2\equiv 7 \pmod 8$. Then
  	$$h_2(L_d)=\frac{1}{2}h_2(k),$$
  	with $k=\mathbb{Q}(\sqrt{2},\sqrt{-d})$. Moreover,
  	$$\mathrm{Cl}_2(k)=(2,2,2) \text{ if and only if} \left(\frac{q_1}{q_2}\right)=-1.$$
  \end{theorem}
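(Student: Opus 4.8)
The plan is to mirror the proof of Theorem \ref{thm 4.17}, applying Kuroda's class number formula to the biquadratic CM-extension $L_d/\mathbb{Q}(\sqrt2)$. Its three intermediate fields are the two CM fields $K=\mathbb{Q}(\sqrt2,i)$ and $k=\mathbb{Q}(\sqrt2,\sqrt{-d})$ together with the maximal real subfield $L_d^+=\mathbb{Q}(\sqrt2,\sqrt d)$. Lemma \ref{lm Kurodas formula} then gives
$$h_2(L_d)=\frac{Q_{L_d}}{Q_K Q_k}\cdot\frac{\omega_{L_d}}{\omega_K\omega_k}\cdot\frac{h_2(K)h_2(k)h_2(L_d^+)}{h_2(\mathbb{Q}(\sqrt2))^2}.$$
First I would insert the elementary data $h_2(\mathbb{Q}(\sqrt2))=h_2(K)=1$ (recall $\mathcal O_K$ is principal), together with $\omega_{L_d}=\omega_K=8$ and $\omega_k=2$ (as $k$ does not contain $i$), so the root-of-unity factor equals $\frac{1}{2}$ and the formula collapses to $h_2(L_d)=\frac{Q_{L_d}}{Q_K Q_k}\cdot\frac{1}{2}\,h_2(k)\,h_2(L_d^+)$.

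Next I would dispose of the unit indices: $Q_{L_d}=1$ by Lemma \ref{corr indices du thm q1q2}, $Q_K=1$ by Lemma \ref{lm unite zeta8}, and $Q_k=1$ exactly as in the proof of Proposition \ref{thm d=p equiv 1 mod 8, when clL=(2,2)} (since $N(\varepsilon_2)=-1$, the result of \cite{AZ99} yields $E_k=\langle-1,\varepsilon_2\rangle=W_kE_{k^+}$). This reduces the identity to $h_2(L_d)=\frac{1}{2}h_2(k)h_2(L_d^+)$, so the desired equality $h_2(L_d)=\frac{1}{2}h_2(k)$ is equivalent to $L_d^+$ having odd $2$-class number, i.e. $h_2(L_d^+)=1$. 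I would obtain this from Wada's formula (Proposition \ref{wada's f.}) applied to the real biquadratic field $L_d^+$, whose quadratic subfields are $\mathbb{Q}(\sqrt2)$, $\mathbb{Q}(\sqrt{q_1q_2})$ and $\mathbb{Q}(\sqrt{2q_1q_2})$: reading $q(L_d^+)=2$ off the fundamental system of units in Lemma \ref{corr indices du thm q1q2} and using $h_2(2)=1$, $h_2(q_1q_2)=1$ and $h_2(2q_1q_2)=2$, Wada's formula returns $h_2(L_d^+)=\frac{1}{4}\cdot2\cdot1\cdot1\cdot2=1$.

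The delicate part, and the point where this case genuinely diverges from Theorem \ref{thm 4.17}, is pinning down these quadratic $2$-class numbers. Since both $q_1\equiv3$ and $q_2\equiv7\pmod8$ are $\equiv3\pmod4$, the field $\mathbb{Q}(\sqrt{q_1q_2})$ has $q_1q_2\equiv1\pmod4$ with a discriminant carrying only two prime divisors and $N(\varepsilon_{q_1q_2})=+1$; a Rédei-type $4$-rank computation (or the tables of \cite{connor88}) forces its wide $2$-class number to be $1$, while $h_2(2q_1q_2)=2$ follows from \cite{connor88, kaplan76}. It is precisely this vanishing of $h_2(q_1q_2)$ that produces the factor $\frac{1}{2}$ here in place of the $1$ of Theorem \ref{thm 4.17}, so I expect this to be the main obstacle to make rigorous. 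Finally, for the ``Moreover'' statement I would invoke the relevant result of \cite{mccall1995imaginary} to get that $\mathrm{Cl}_2(k)$ has rank $3$; combining $h_2(k)=2h_2(L_d)$ with Proposition \ref{prop d=q1q2 cl=(2,2)} then gives $\mathrm{Cl}_2(k)=(2,2,2)\iff h_2(k)=8\iff h_2(L_d)=4\iff\left(\frac{q_1}{q_2}\right)=-1$, which completes the argument.
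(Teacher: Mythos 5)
Your proposal is correct and follows essentially the same route as the paper, whose own proof of this theorem is just the one-line instruction to proceed as in Theorem \ref{thm 4.17} (Kuroda's formula for $L_d/\mathbb{Q}(\sqrt2)$ with $Q_{L_d}=Q_K=Q_k=1$, $\omega_{L_d}/(\omega_K\omega_k)=1/2$, and Wada's formula for $h_2(L_d^+)$). You correctly supply the detail the paper leaves implicit, namely that $h_2(q_1q_2)=1$ (both primes being $\equiv 3\pmod 4$) forces $h_2(L_d^+)=1$ instead of $2$, which is exactly where the extra factor $\tfrac12$ comes from.
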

  \begin{proof}
  	We proceed as in the   proof of Theorem \ref{thm 4.17}.
  \end{proof}
  
 % \begin{rema}
 % 	With the same assumptions of the previous theorem, we have
%  	\begin{enumerate}[\rm 1.]
 % 		\item $	h_2(k)=8$ if and only if $\left(\frac{q_1}{q_2}\right)=-1.$
 % 		\item $h_2(L_d^+)=1$ with $L_d^+=\mathbb{Q}(\sqrt{2},\sqrt{d}).$
 % 	\end{enumerate}
 % \end{rema}

  \begin{rema}
  	 As a continuation of this work  we interested in the cases where  $\mathrm{Cl}_2(L_d)$ is of type $(2^n, 2^m)$ or $(2,2,2)$,  where $n\geq1$ and $m>1$, and based on this work, we studied the problem of the Hilbert $2$-class field tower of these fields. Note also that,    we generalized some of our results on $\mathbb{Q}(\zeta_{8},\sqrt{d})$ to the fields  $\mathbb{Q}(\zeta_{2^m},\sqrt{d})$, for $m\geq 4$ (cf. \cite{chemsZkhnin4,chemsZkhnin2,CZA3}).  
  \end{rema}
  
 % \section{Appendix}\label{3}
 % Let $d$ be an odd positive square-free integer. We have $L_d=\mathbb{Q}(\sqrt{i},\sqrt{d})=\mathbb{Q}(i,\sqrt{2},\sqrt{d})$. Using the primitive element theorem,  we get
 % $L_{d}=\mathbb{Q}(\alpha)$ with $\alpha =\sqrt{d}+\sqrt{i}$. One can   easily verify that
 % $$P_d(X)=X^{8}-4dX^{6}+(6d^2+2)X^4+(-4d^3+12d)X^2+(d^4+2d^2+1),$$
 % is a  polynomial of degree $8$ that vanishes at $\alpha$. So it is irreducible. Since $e^{i\pi/4}$ and $-e^{i\pi/4}$ are two square roots of $i$, then $\alpha_1=\sqrt{d}+e^{i\pi/4}$ and $\alpha_2=\sqrt{d}-e^{i\pi/4}$ are two roots of $P_d$. Remark that $P_d(X)=Q_d(X^2)$ and $P_d$ is a polynomial  with real coefficients, then   the roots of $P_d$ are listed as follows:\\
  
 % \hspace{ 0.3cm}{\renewcommand{\arraystretch}{1.2}
 % 	\setlength{\tabcolsep}{0.4cm}
 % 	\begin{tabular}{|c|c|c|c|}
  %		\hline
  %		$x$	& $\overline{x}$ & $-x$ & $\overline{-x}$ \\
  %		\hline
  %		$\alpha_1=\sqrt{d}+e^{i\pi /4}$	& $\sqrt{d}+e^{-i\pi /4}$ &  $-\sqrt{d}-e^{i\pi /4}$ &  $-\sqrt{d}-e^{-i\pi /4}$ \\
  %		\hline
  %		$\alpha_2=\sqrt{d}-e^{i\pi /4}$	& $\sqrt{d}-e^{-i\pi /4}$ &$-\sqrt{d}+e^{i\pi /4}$  & $-\sqrt{d}+e^{-i\pi /4}$ \\
  %		\hline
 % \end{tabular} }	

   \section*{Acknowledgment}
  	The authors are very grateful to the reviewer for his/her careful and meticulous reading of the paper.

 \end{document}